\documentclass[a4paper,10pt]{amsart}

\usepackage{amsmath}
\usepackage{amsthm}
\usepackage{amssymb}
\usepackage{amsfonts}
\usepackage{mathrsfs}  
\usepackage{enumitem} 
\usepackage{xcolor}
\usepackage[bookmarks=false,hyperindex,pdftex,colorlinks,citecolor=blue,urlcolor=cyan]{hyperref}
\usepackage[a4paper,lmargin=3cm,rmargin=3cm,tmargin=4cm,bmargin=4cm,marginparwidth=2.8cm,marginparsep=1mm]{geometry} 
\usepackage{soul}
\usepackage[all]{xy} 

\DeclareMathOperator{\lspan}{span}                          
\DeclareMathOperator{\conv}{conv}                           
\DeclareMathOperator{\Lip}{Lip}                             
\DeclareMathOperator{\lip}{lip}                             

\newcommand{\NN}{\mathbb{N}}                                
\newcommand{\QQ}{\mathbb{Q}}                                
\newcommand{\RR}{\mathbb{R}}                                
\newcommand{\CC}{\mathbb{C}}                                
\newcommand{\HH}{\mathcal{H}}                               
\newcommand{\ep}{\varepsilon}

\newcommand{\abs}[1]{\left|{#1}\right|}                     
\newcommand{\pare}[1]{\left({#1}\right)}                    
\newcommand{\set}[1]{\left\{{#1}\right\}}                   
\newcommand{\norm}[1]{\left\|{#1}\right\|}                  
\newcommand{\duality}[1]{\left<{#1}\right>}                 
\newcommand{\cl}[1]{\overline{#1}}                          
\newcommand{\wsconv}{\stackrel{w^*}{\longrightarrow}}       
\newcommand{\restrict}{\mathord{\upharpoonright}}           
\newcommand{\lipfree}[1]{\mathcal{F}({#1})}                 

\renewcommand{\leq}{\leqslant}
\renewcommand{\geq}{\geqslant}

\theoremstyle{plain}
\newtheorem{theorem}{Theorem}[section]
\newtheorem{lemma}[theorem]{Lemma}
\newtheorem{corollary}[theorem]{Corollary}
\newtheorem{proposition}[theorem]{Proposition}

\newtheorem*{claim*}{Claim}

\newtheorem{question}[theorem]{Question}

\theoremstyle{definition}
\newtheorem*{definition*}{Definition}
\newtheorem{definition}[theorem]{Definition}
\newtheorem{example}[theorem]{Example}

\theoremstyle{remark}
\newtheorem{remark}[theorem]{Remark}

\begin{document}
\title{Lipschitz-free spaces and purely 1-unrectifiable metric spaces}

\author[R. J. Aliaga]{Ram\'on J. Aliaga}
\address[R. J. Aliaga]{Instituto Universitario de Matem\'atica Pura y Aplicada,
Universitat Polit\`ecnica de Val\`encia,
Camino de Vera S/N,
46022 Valencia, Spain}
\email{ramon.aliaga@upv.es}

\begin{abstract}
The Lipschitz-free space $\mathcal{F}(M)$ is a canonical linearization of a complete metric space $M$ whose topological dual is the space of Lipschitz functions on $M$. We review the properties of $\mathcal{F}(M)$ when the underlying space $M$ is purely 1-unrectifiable, that is, it contains no bi-Lipschitz copy of a subset of $\mathbb{R}$ with positive measure. For compact $M$, this is equivalent to several Banach space properties of $\mathcal{F}(M)$, including the Radon-Nikod\'ym and Schur properties or admitting a predual. We shall see how the study of locally flat Lipschitz functions on $M$ reveals these equivalences, and describe a technique that allows most of them to be transferred to the non-compact setting.

This manuscript is an expository text based on results by the author in collaboration with C. Gartland, C. Petitjean and A. Proch\'azka, originally published in a Trans. Amer. Math. Soc. paper,
and corresponds to a lecture delivered at the Second Winter School in Geometric Measure Theory at Westlake University, Hangzhou, on February 2026.
\end{abstract}

\subjclass[2020]{51F30, 46B22}

\keywords{Purely 1-unrectifiable space, locally flat function, Lipschitz-free space, Radon-Nikod\'ym property, Schur property}

\maketitle

\section{Introduction}

Given a metric space $M$, one can construct a canonical linearization of $M$ as a Banach space $\lipfree{M}$ with the property that Lipschitz maps involving $M$ extend to bounded linear operators involving $\lipfree{M}$. This object is called the \emph{Lipschitz-free space over $M$} and, as it turns out, its topological dual is the space $\Lip_0(M)$ of real-valued Lipschitz functions on $M$ vanishing at a prespecified point. The fundamental papers in Lipschitz-free space theory are \cite{GodefroyKalton,Kalton} by Godefroy and Kalton, and the main resource for reference is Weaver's monograph \cite{Weaver2} (although the latter tends to focus on $\Lip_0(M)$ rather than $\lipfree{M}$).

One of the main draws of the theory of Lipschitz-free spaces is precisely that it provides linearizations of metric spaces and Lipschitz mappings between them, thus allowing us to study their metric and Lipschitz structure using linear tools from Banach space theory. This linearization procedure simplifies the mappings (Lipschitz functions become linear) at the cost of making the spaces themselves more complicated - the structure of $\lipfree{M}$ is, in general, notoriously more intricate than that of $M$. Thus, any information about the structure of $\lipfree{M}$ can potentially be used to extract non-trivial metric consequences. That was already exploited in the seminal paper \cite{GodefroyKalton}: by considering Lipschitz-free spaces over Banach spaces (regarded as metric spaces with their norm metric), Godefroy and Kalton showed that the bounded approximation property is preserved between bi-Lipschitz equivalent Banach spaces. Lipschitz-free spaces also appear naturally in other areas, such as in optimal transport theory as $1$-Wasserstein spaces (see \cite{Villani} and \cite[Section 1.2]{APS1}) and in geometric measure theory as certain spaces of currents (see the recent preprints \cite{BCTVW,DePauw}).

A common theme in Lipschitz-free space theory is finding implications (equivalences, if possible) between metric properties of $M$ and functional analytic properties of $\lipfree{M}$. For example, previous works have established metric characterizations of Banach space properties of $\lipfree{M}$ such as the Daugavet property \cite{GPR}, octahedrality \cite{PRZ}, or being isometric to (a subspace of) the sequence space $\ell_1$ \cite{APP,DKP}; and, conversely, functional analytic characterizations of metric properties of $M$ such as being geodesic \cite{APS3} or $0$-hyperbolic \cite{Godard}.

This paper is devoted to another such equivalence, that unexpectedly links several well-established, important properties from both fields. On the metric side, we consider the following notion.

\begin{definition}
\label{def:p1u}
A metric space $M$ is \emph{purely $1$-unrectifiable} if $\HH^1(\varphi(A))=0$ for every subset $A\subset\RR$ and every Lipschitz function $\varphi:A\to M$.
\end{definition}

Here and hereafter, $\HH^1$ denotes $1$-dimensional Hausdorff measure. As a simple consequence of Kirchheim's work on the metric differential \cite{Kirchheim}, one may replace ``Lipschitz'' with ``bi-Lipschitz'' in Definition \ref{def:p1u}. This yields the following alternative characterization of pure $1$-unrectifiability in terms of \emph{curve fragments}, that is, bi-Lipschitz copies of compact subsets $K\subset\RR$ with $\HH^1(K)>0$.

\begin{theorem}
\label{th:p1u kirchheim}
A metric space is purely $1$-unrectifiable if and only if it contains no curve fragment.
\end{theorem}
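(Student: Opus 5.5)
We prove the two directions separately. The easy direction is that purely $1$-unrectifiable implies no curve fragment. Suppose $M$ contains a curve fragment, that is, a bi-Lipschitz map $\varphi:K\to M$ with $K\subset\RR$ compact and $\HH^1(K)>0$. Since $\varphi^{-1}:\varphi(K)\to K$ is Lipschitz, we get $\HH^1(K)\leq \Lip(\varphi^{-1})\,\HH^1(\varphi(K))$. Hence $\HH^1(\varphi(K))>0$, which contradicts Definition \ref{def:p1u}.

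The converse is the substantial direction. Assume $M$ contains no curve fragment, and let $\varphi:A\to M$ be Lipschitz with $A\subset\RR$. We must show $\HH^1(\varphi(A))=0$.

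\textbf{Reductions.} We reduce to a nice setting in three steps.
\begin{enumerate}
\item We may replace $M$ by its completion $\bar M$. A curve fragment in $\bar M$ is compact, but it need not lie in $M$, so this step has to be revisited at the end.
\item The map $\varphi$ extends uniquely to a Lipschitz map on $\bar A$. The extended map agrees with $\varphi$ on $A$, so it suffices to show that the image of any subset of a closed set has null measure.
\item Write $\bar A$ as a countable union of compact sets. We may then assume $A=K$ is compact.
\end{enumerate}
Next we embed $\varphi(K)$ isometrically into $\ell_\infty$ via the Kuratowski embedding. This makes Kirchheim's metric differential available: for a.e.\ $t\in K$ there is a seminorm $\mathrm{md}(\varphi,t)$ on $\RR$, that is, a number $c_t\geq 0$, with
\[
d(\varphi(s),\varphi(s'))-c_t|s-s'| = o(|s-t|+|s'-t|).
\]

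\textbf{Splitting $K$.} Split $K$ into $Z=\{c_t=0\}$, the set where the differential does not exist (which is $\HH^1$-null), and $P=\{c_t>0\}$. By the area formula for Lipschitz maps into metric spaces, also due to Kirchheim, $\HH^1(\varphi(Z))\leq\int_Z c_t\,dt=0$, and the image of the null set is null because $\varphi$ is Lipschitz.

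On $P$ we use Kirchheim's decomposition lemma. It shows that $P$ is covered, up to a null set, by countably many compact sets $K_i$ on which $\varphi$ is bi-Lipschitz. The idea is to fix rational bounds $c_t\in(q,q')$ and a uniform scale $\delta$ below which the $o(\cdot)$ error is at most $\ep|s-s'|$, and then use Egorov/Lusin-type uniformization on pieces of diameter less than $\delta$. If $\HH^1(\varphi(P))>0$, some $K_i$ has $\HH^1(K_i)>0$. Then $\varphi(K_i)$ is a curve fragment, compact and therefore lying in $\varphi(K)\subset\bar M$.

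\textbf{Main obstacle.} The hardest point is the uniformization step: showing that on small pieces of $P$ the map $\varphi$ satisfies $d(\varphi(s),\varphi(s'))\geq (q-2\ep)|s-s'|$ for all $s,s'$ in the piece, and not only for pairs near a fixed base point. I would first try to get this directly from Kirchheim's theorem, which gives the two-point estimate uniformly on compact sets of large measure.

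A second, minor worry is the completion step. The fragment lies in $\bar M$, not necessarily in $M$. We avoid this by never leaving $\varphi(A)\subset M$. By inner regularity we pass to a subset $A'\subset A\cap K_i$ of positive measure that is compact in $A$, and then $\varphi(A')\subset M$ is a curve fragment. Alternatively, if $M$ is assumed complete, as it is throughout the paper, the issue does not arise.
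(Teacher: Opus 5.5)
For complete $M$ your argument follows the route the paper has in mind. The paper gives no proof beyond attributing the result to Kirchheim's metric differential. Your easy direction, the reduction to compact $K$, the area formula on $\{c_t=0\}$ and Kirchheim's bi-Lipschitz decomposition on $\{c_t>0\}$ are exactly that argument.

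Your ``main obstacle'' is not a real one. Egorov's theorem gives a compact $C$ of almost full measure and a $\delta_0>0$ such that the $o(\cdot)$ term is at most $\ep(|s-t|+|s'-t|)$ uniformly for $t\in C$. Now take the base point to be $t=s$ itself. Then $d(\varphi(s),\varphi(s'))\geq (c_s-\ep)|s-s'|$ for all $s,s'\in C$ with $|s-s'|<\delta_0$.

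The genuine gap is your handling of non-complete $M$. To pass to a compact $A'\subset A\cap K_i$ of positive measure, you need $A\cap K_i$ to have positive \emph{inner} Lebesgue measure. But $A$ in Definition \ref{def:p1u} is an arbitrary subset of $\RR$. From $\HH^1(\varphi(A))>0$ you only get positive \emph{outer} measure, $\HH^1(A\cap K_i)>0$ for some $i$.

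No argument can close this gap, because the equivalence is false without completeness. Let $B\subset[0,1]$ be a Bernstein set, i.e.\ both $B$ and $[0,1]\setminus B$ meet every uncountable closed subset of $[0,1]$. Then every compact subset of $B$, and every compact subset of its complement, is countable. Hence both sets have inner measure $0$, and so $\HH^1(B)=1$. Taking $M=A=B$ and $\varphi$ the identity shows that $M$ is not purely $1$-unrectifiable. Yet a curve fragment in $M$ would be a compact subset of $B$ with positive measure, and no such set exists.

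So you should drop the inner-regularity claim and assume $M$ complete. This is your ``alternatively'' clause, and it is the setting of Theorems \ref{th:compact} and \ref{th:non-compact}. There is another option if Definition \ref{def:p1u} is restricted to Lebesgue measurable $A$. In that case, decompose $A$ itself into countably many compact sets plus a null set at the very start. Then all images stay inside $M$, and the completion is never needed.
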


\noindent See e.g. \cite[Section 1.3]{AGPP} for details of the proof of Theorem \ref{th:p1u kirchheim}.

\medskip
On the Banach space side, we consider the following properties.

\begin{definition}
\label{def:rnp}
A Banach space $X$...
\begin{enumerate}[label={\upshape{(\alph*)}}]
\item ...has the \emph{Radon-Nikod\'ym property} if every Lipschitz mapping $f:[0,1]\to X$ is differentiable almost everywhere.
\item ...has the \emph{Schur property} if every sequence in $X$ that converges in the weak topology also converges in norm.
\end{enumerate}
\end{definition}

The Radon-Nikod\'ym property, in particular, is arguably the single most important property in Banach space theory and has \textit{many} equivalent characterizations, so different texts will often give different definitions for it. For instance, $X$ has the Radon-Nikod\'ym property if and only if...
\begin{itemize}
\item ...the Radon-Nikod\'ym theorem holds for $X$-valued vector measures (this is, of course, the original definition and name giver).
\item ...every closed, bounded, convex set $C\subset X$ is the closed convex hull of its denting points.
\end{itemize}
See Section VII.6 in \cite{DiestelUhl} for a more exhaustive list of equivalent formulations of the Radon-Nikod\'ym property. An additional one will be given in Section \ref{sec:probabilistic}.

\medskip
The main result presented here is the correspondence between the properties in Definitions \ref{def:p1u} and \ref{def:rnp}. Namely, the Radon-Nikod\'ym and Schur properties are equivalent for Lipschitz-free spaces $\lipfree{M}$, and they hold if and only if the completion of $M$ is purely $1$-unrectifiable. See Theorem \ref{th:non-compact} below for additional equivalent conditions, and Theorem \ref{th:compact} for further conditions that are equivalent only in the case where $M$ is compact. Notably, both the metric property (pure $1$-unrectifiability) and the functional analytic properties (Radon-Nikod\'ym and Schur) are non-trivial, highly specialized properties that are seldom used outside of their respective areas (geometric measure theory and Banach space theory, respectively). Thus, this result establishes an unexpected bridge between both theories, and we can only hope that more such bridges can be found in the near future.

The equivalence theorem was obtained in \cite{AGPP} by the author of this text, together with C. Gartland, C. Petitjean and A. Proch\'azka, but it builds on many previous intermediate and related results by many authors scattered throughout different references including \cite{ANPP,Bate,Godard,Kalton,Weaver2}. This paper is intended as an exposition of the equivalence result and its proof but, given its rather large scope, we will not attempt to provide rigorous arguments for every single step. Instead, we will sketch the main ideas behind some of the more lengthy constructions, and only provide full proofs of some of the smaller steps. Full references are provided, and we encourage the interested reader to consult them in order to find the missing details.

The paper is structured as follows. In Section \ref{sec:locally flat} we analyze the purely metric side of the problem, namely, the relation between pure $1$-unrectifiability and the possibility of approximating Lipschitz functions on $M$ with locally flat functions. Besides being one of the core ideas behind the main result, this is a problem of independent interest that is moreover still open in full generality. The functional analytic content of the paper starts in Section \ref{sec:lipfree}, where Lipschitz-free spaces are formally introduced and their main properties are described and proved. Finally, the last two sections are devoted to the proof of the main result. We first prove equivalence for compact $M$ in Section \ref{sec:compact}, and then in Section \ref{sec:non-compact} we describe a general technique that allows this and other problems to be reduced to the compact case.

\section{Locally flat Lipschitz functions}
\label{sec:locally flat}

Throughout this document, $M$ will denote a metric space with metric $d$, and the closed ball with center $x\in M$ and radius $r>0$ will be denoted $B(x,r)$. Much of our attention will be directed towards real-valued functions $f:M\to\RR$. For any such function, its (optimal) Lipschitz constant is
$$
\Lip(f) = \sup\set{\frac{f(x)-f(y)}{d(x,y)} \,:\, x\neq y\in M} ,
$$
and we say that $f$ is Lipschitz if $\Lip(f)<\infty$. We denote by
$$
\Lip(M) = \set{f:M\to\RR \,:\, \text{$f$ is Lipschitz}}
$$
the set of all Lipschitz functions $f:M\to\RR$. One easily checks that $\Lip(f+g)\leq\Lip(f)+\Lip(g)$ and $\Lip(\max\set{f,g})\leq\max\set{\Lip(f),\Lip(g)}$, thus $\Lip(M)$ is a vector space that is closed under taking finite minima or maxima. McShane's well-known extension theorem (see e.g. \cite[Theorem 1.33]{Weaver2}) states that any $f\in\Lip(N)$ defined on a subset $N\subset M$ can be extended to a function $F\in\Lip(M)$ (in the sense that $F\restrict_N=f$) without increasing its Lipschitz constant, that is, so that $\Lip(F)=\Lip(f)$.

In this section, we focus on a special class of Lipschitz functions. Many of the arguments used here to work with them can be found explicitly or implicitly in Chapter 4 of Weaver's book \cite{Weaver2}.

\begin{definition}\label{def:locally flat}
Let $M$ be a metric space, $f:M\to\RR$ a function, and fix $x\in M$.
\begin{enumerate}[label={\upshape{(\alph*)}}]
\item The \emph{asymptotic Lipschitz constant} of $f$ at $x$ is the value
\begin{equation}\label{eq:asymptotic_lip}
\lim_{r\to 0}\Lip(f\restrict_{B(x,r)}) = \lim_{r\to 0}\sup_{\substack{{y,z\in B(x,r)}\\{y\neq z}}}\frac{\abs{f(y)-f(z)}}{d(y,z)} .
\end{equation}
The supremum is understood to be $0$ if there are no such $y,z$. That is, the asymptotic Lipschitz constant is $0$ at isolated points.
\item We say that $f$ is \emph{locally flat at $x$} if its asymptotic Lipschitz constant at $x$ is $0$. If $f$ is locally flat at every $x\in M$, then we simply say that $f$ is \emph{locally flat}.
\end{enumerate}
\end{definition}

Local flatness at a given point $x\in M$ can be understood as a purely metric version of ``having null gradient at $x$''. A uniform version exists as well: we say that $f$ is \emph{uniformly locally flat} if the limit \eqref{eq:asymptotic_lip} is 0 uniformly on $x\in M$, that is, if
$$
\lim_{r\to 0}\sup_{\substack{{y,z\in M}\\{0<d(y,z)\leq r}}}\frac{\abs{f(y)-f(z)}}{d(y,z)} = 0 .
$$
A standard compactness argument shows that every locally flat function is uniformly locally flat when $M$ is compact, but this is no longer true for general $M$.

The set of all functions in $\Lip(M)$ that are locally flat is denoted by
$$
\lip(M) = \set{f\in\Lip(M) \,:\, \text{$f$ is locally flat}} .
$$
The following properties of locally flat functions are easy to check:
\begin{itemize}
\item $\lip(M)$ is a vector space.
\item The pointwise maximum and minimum of two functions in $\lip(M)$ also belong to $\lip(M)$.
\item Composing a locally flat function with a Lipschitz function (in any order) yields a locally flat function.
\item Given a function $f$ and a sequence $(f_n)$ in $\Lip(M)$ such that $\Lip(f-f_n)\to 0$, if all $f_n$ are locally flat then $f$ is locally flat as well.
\end{itemize}

Given a subset $N\subset M$, a function in $\lip(N)$ cannot in general be extended to a function in $\lip(M)$ (see \cite[Section 4.4]{Weaver2} for some situations where they can), but extensions with weaker properties are possible. An improvement of McShane's theorem due to Di Marino, Gigli and Pratelli \cite{DGP} states that any $f\in\Lip(N)$ can be extended to a function $F\in\Lip(M)$, with $\Lip(F)\leq\Lip(f)+\ep$ for any given $\ep>0$, in such a way that the asymptotic Lipschitz constants of $f$ and $F$ agree on every point of $N$. Unlike McShane's theorem, this result does not hold with $\ep=0$ in general. As a particular case of it, we have:

\begin{theorem}[cf.~\cite{DGP}]\label{th:locally_flat_extension}
Let $M$ be a metric space, $N\subset M$ and $\ep>0$. Then any $f\in\lip(N)$ admits an extension $F\in\Lip(M)$, with $\Lip(F)\leq\Lip(f)+\ep$, that is locally flat at every point of $N$.
\end{theorem}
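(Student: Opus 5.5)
We first reduce to the case where $N$ is closed. A Lipschitz $f$ extends uniquely to $\cl{N}$ with the same Lipschitz constant. Local flatness also passes to $\cl{N}$: if $x\in\cl{N}$ and $r>0$, then $B(x,r)\cap\cl{N}$ lies in the closure of $B(x',2r)\cap N$ for some $x'\in N$ near $x$. To control this uniformly, the cleanest route is to note that for $y,z\in\cl N$ near $x$ we can approximate them by points of $N$ near $x$. What we need is that $\Lip(f\restrict_{B(x,r)\cap N})\to 0$ as $r\to0$ for $x\in\cl N$ as well, and this is not automatic. Since only local flatness at points of $N$ is required, we can skip this reduction and work with $N$ directly. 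We normalize $\Lip(f)=1$ (the case $\Lip(f)=0$ is trivial).

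The construction is a weighted McShane/inf-convolution extension with a distance that is inflated far from $N$. For $x\in M$, let $\rho(x)=d(x,N)$. We want a function $\phi_x$ on $N$ that is small near points where $f$ is flat at scale comparable to $\rho(x)$. Define the local slope at scale $r$ by
$$
\omega(p,r)=\Lip(f\restrict_{B(p,r)\cap N}),
$$
which is nondecreasing in $r$ and tends to $0$ as $r\to0$ for each $p\in N$. Set
$$
F(x)=\inf_{p\in N}\bigl(f(p)+h(x,p)\bigr),
$$
where $h(x,p)$ is a ``cost'' that is roughly $\ep\, d(x,p)$ when $d(x,p)\lesssim \rho(x)$ is comparable to the distance to $N$, and roughly $d(x,p)$ otherwise. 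A concrete choice is
$$
h(x,p)=\min\set{(1+\ep)d(x,p),\ \ep d(x,p)+ C\rho(x)}.
$$
This $h$ is Lipschitz in $x$ with constant at most $1+2\ep$ if $C$ is chosen appropriately, possibly after replacing $\ep$ by a multiple of $\ep$. A cleaner alternative is to invoke the Di Marino--Gigli--Pratelli statement quoted just above, which is exactly the case of matching asymptotic Lipschitz constants. Given that this theorem is cited as a particular case of \cite{DGP}, the intended proof is simply that specialization: the asymptotic Lipschitz constants of $f$ and $F$ agree on $N$, and those of $f$ vanish there, so $F$ is locally flat at every point of $N$. That is the first thing we will write down.

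For a self-contained argument, the key steps in order are as follows. (i) Check that $F$ is an extension: for $x\in N$, $F(x)\leq f(x)$ by taking $p=x$, and $F(x)\geq f(x)$ because $h(x,p)\geq d(x,p)\geq f(x)-f(p)$ when $\rho(x)=0$. (ii) Bound $\Lip(F)\leq 1+O(\ep)$, since $F$ is an infimum of functions that are each $(1+O(\ep))$-Lipschitz in $x$. (iii) Prove local flatness at $x_0\in N$. For $y,z\in B(x_0,r)$, near-minimizers $p$ for $y$ must satisfy $d(y,p)\lesssim \rho(y)/\ep$; otherwise the cheap branch at the nearest point of $N$ would beat them. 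These minimizers therefore lie in $B(x_0,Cr/\ep)$, where $f$ has slope at most $\omega(x_0,Cr/\ep)$. One then compares $F(y)$ and $F(z)$ by transporting the minimizer for $y$ to a near-minimizer for $z$.

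Step (iii) is the main obstacle. The cost $h$ is only $(1+\ep)$-Lipschitz, not $o(1)$-Lipschitz, so the comparison has to use the flatness of $f$ between minimizers, and not the Lipschitz bound of $h$. This probably forces the cost to depend on $\omega$ itself, for instance $h(x,p)=\ep d(x,p)+\omega(p,\cdot)$-weighted terms, which is precisely the technical heart of \cite{DGP}. We would try first to follow their construction rather than reinvent it.
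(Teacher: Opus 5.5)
Your first route, invoking the Di Marino--Gigli--Pratelli extension, is exactly the paper's argument. The paper gives no other proof: it treats the statement as the special case of \cite{DGP} in which the asymptotic Lipschitz constants of $f$ are transferred to $F$ at points of $N$. These constants all vanish because $f\in\lip(N)$, and together with $\Lip(F)\leq\Lip(f)+\ep$ that is the whole proof.

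The self-contained sketch is not a proof, and you should drop it or rebuild it.

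\textbf{Step (i) fails.} For your concrete cost, if $x\in N$ then $\rho(x)=0$, so $h(x,p)=\min\set{(1+\ep)d(x,p),\ep d(x,p)}=\ep d(x,p)$, which is not $\geq d(x,p)$. Take $M=\RR$, $N=\set{0,1}$, $f(0)=0$, $f(1)=1$ (trivially in $\lip(N)$). Then $F(1)\leq f(0)+\ep<f(1)$, so $F$ does not extend $f$.

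\textbf{Why it fails.} The branch $\ep d(x,p)+C\rho(x)$ is the smaller one exactly when $d(x,p)>C\rho(x)$, that is, for far-away $p$. This is the opposite of the regime you intended. It also invalidates the localization of near-minimizers claimed in step (iii): far points where $f$ is lower are cheap to reach.

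\textbf{What a repair would need.} Any inf-convolution $F(x)=\inf_{p\in N}(f(p)+h(x,p))$ that extends $f$ needs $h(q,p)\geq f(q)-f(p)$ for all $p,q\in N$. To keep $h$ small near the diagonal, this bound has to come from local slopes of $f$, for instance $h(q,p)\geq\omega(p,d(p,q))\,d(p,q)$. Reconciling such a cost with the global bound $1+\ep$ and with localization of minimizers is the nontrivial content of \cite{DGP}, and your step (iii) leaves it open.
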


It is instructive to analyze how the space of locally flat Lipschitz functions looks like for different subsets of $\RR$.

\begin{example}\label{ex:examples_lip}
\hfill
\begin{enumerate}[label={\upshape{(\alph*)}}]
\item \label{ex:lipR} Suppose first that $M=[0,1]$. If $f:M\to\RR$ is locally flat at a certain $x\in M$, then in particular $f'(x)=0$. Thus, if $f\in\lip(M)$ then the fundamental theorem of calculus implies that $f$ is constant. The same conclusion holds if we take $M=\RR$.
\item Now take $M=\NN$. Since every point of $M$ is isolated, every Lipschitz function on $M$ is locally flat at every point. Thus $\lip(M)=\Lip(M)$.
\item \label{ex:real_null} A more interesting situation arises if we look at an intermediate case. Suppose that $M$ is a (not necessarily closed) subset of $\RR$ with $\HH^1(M)=0$. If $M$ has non-isolated points then it is no longer true that $\lip(M)=\Lip(M)$, as e.g. the identity $f(x)=x$ has asymptotic Lipschitz constant equal to $1$ at every such point. However, there are plenty of locally flat functions; in fact, every Lipschitz function on $M$ can be approximated pointwise by locally flat functions.

To see this, fix $\ep>0$, and cover $M$ with an open set $U\subset\RR$ such that $\HH^1(U)<\ep$. Now define $f:\RR\to\RR$ by
$$
f(x) = \int_0^x (1-\chi_U(t))\,dt
$$
where $\chi_U$ denotes the characteristic function of $U$. That is, $f$ stays constant within connected components of $U$, and increases with slope $1$ outside of them. It is clear that $f$ is $1$-Lipschitz. While the function $f$ itself is not locally flat, note that the restriction $f\restrict_M$ is. Indeed, $f\restrict_M$ is even locally constant, as every point $x\in M$ is contained in an open connected component of $U$ on which $f$ is constant. Hence $f\restrict_M\in\lip(M)$. Moreover, for any pair of points $x,y\in M$ with $x<y$, we have
$$
f(y)-f(x) = \int_x^y (1-\chi_U(t))\,dt = \HH^1([x,y]\setminus U) > (y-x)-\ep .
$$
Thus, although $\lip(M)$ does not contain \textit{all} Lipschitz functions, it contains enough of them to be able to separate pairs of points arbitrarily well. As we will see in Proposition \ref{pr:spu eqv density} below, this is equivalent to the statement that functions in $\Lip(M)$ can be approximated, in the topology of pointwise convergence, by locally flat functions with the same Lipschitz constant.
\end{enumerate}
\end{example}

The separation property obtained in Example \ref{ex:examples_lip}\ref{ex:real_null} was first isolated by Weaver in \cite{Weaver96_2}. Originally, it was simply called ``separation property'', but the following term is much more common now.

\begin{definition}\label{def:spu}
Let $M$ be a metric space. We say that $\lip(M)$ \emph{separates points of $M$ uniformly} if the following holds: given any $x,y\in M$ and any $\ep>0$, there exists a function $f\in\lip(M)$ with $\Lip(f)\leq 1$ such that $f(y)-f(x) > d(x,y)-\ep$ (or equivalently, there exists a function $f\in\lip(M)$ with $\Lip(f)<1+\ep$ such that $f(y)-f(x)=d(x,y)$).
\end{definition}

(In the notation of \cite{Weaver2}, we would say that $\lip(M)$ separates points of $M$ uniformly \emph{with factor/constant $1$}, but we will not make such fine distinctions.)

As foreshadowed above, this separation property can be restated as an approximation property as follows.

\begin{proposition}\label{pr:spu eqv density}
Let $M$ be a metric space. Then the following are equivalent:
\begin{enumerate}[label={\upshape{(\roman*)}}]
\item $\lip(M)$ separates points of $M$ uniformly,
\item the set $\set{f\in\lip(M) : \Lip(f)\leq 1}$ is dense in $\set{f\in\Lip(M) : \Lip(f)\leq 1}$ in the topology of pointwise convergence.
\end{enumerate}
\end{proposition}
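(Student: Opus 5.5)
The implication (ii)$\Rightarrow$(i) is immediate. Given $x,y\in M$ and $\ep>0$, the function $g(z)=d(z,x)$ is $1$-Lipschitz and satisfies $g(y)-g(x)=d(x,y)$. By (ii), some $f\in\lip(M)$ with $\Lip(f)\leq 1$ lies in the pointwise neighbourhood
$$\set{h : \abs{h(x)-g(x)}<\ep/2,\ \abs{h(y)-g(y)}<\ep/2},$$
and then $f(y)-f(x)>d(x,y)-\ep$.

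For (i)$\Rightarrow$(ii), fix $g\in\Lip(M)$ with $\Lip(g)\leq1$, a finite set $F=\set{x_1,\dots,x_n}\subset M$ and $\ep>0$. I will construct $h\in\lip(M)$ with $\Lip(h)\leq1$ and $\abs{h(x_i)-g(x_i)}<\ep$ for all $i$. The model is the McShane-type formula $g(x)\approx\min_i\pare{g(x_i)+d(x,x_i)}$ on $F$, but with each distance function replaced by a locally flat surrogate. For each ordered pair $(i,j)$, use (i) to choose $f_{ij}\in\lip(M)$ with $\Lip(f_{ij})\leq1$ and $f_{ij}(x_j)-f_{ij}(x_i)>d(x_i,x_j)-\ep$. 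Normalise by subtracting a constant so that $f_{ij}(x_i)=0$. Define
$$\phi_i=\max_{j}f_{ij}\quad\text{(with } f_{ii}\equiv0\text{)},\qquad h=\min_i\pare{g(x_i)+\phi_i}.$$
Since $\lip(M)$ and the unit Lipschitz ball are both closed under finite max/min and adding constants, $h\in\lip(M)$ and $\Lip(h)\leq1$.

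It remains to evaluate $h$ on $F$. Each $f_{ij}$ is $1$-Lipschitz and vanishes at $x_i$, so $f_{ij}(z)\leq d(z,x_i)$. Hence $\phi_i(x_i)=0$, and therefore $h(x_k)\leq g(x_k)$. In the other direction, $\phi_i(x_k)\geq f_{ik}(x_k)>d(x_i,x_k)-\ep$, so
$$g(x_i)+\phi_i(x_k)>g(x_i)+d(x_i,x_k)-\ep\geq g(x_k)-\ep,$$
using $\Lip(g)\leq1$. Taking the minimum over $i$ gives $h(x_k)>g(x_k)-\ep$, so $\abs{h(x_k)-g(x_k)}<\ep$ on $F$. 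Since basic pointwise neighbourhoods are determined by finitely many points, this proves density.

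There is no real obstacle here. The only point needing care is the normalisation $f_{ij}(x_i)=0$ together with taking the max over $j$ (including $0$). This makes $\phi_i$ an exact lower-approximation of $d(\cdot,x_i)$ at the points of $F$, with $\phi_i(x_i)=0$ exactly, so that the min-formula reproduces $g$ on $F$ up to $\ep$ without the Lipschitz constant exceeding $1$. The alternative form of Definition \ref{def:spu}, with constant $1+\ep$, is not needed.
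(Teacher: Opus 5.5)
Your proof is correct. The direction (ii)$\Rightarrow$(i) is the same as the paper's, but for (i)$\Rightarrow$(ii) you take a different route.

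The paper uses the second form of Definition~\ref{def:spu}. It chooses $f_{ij}\in\lip(M)$ with $\Lip(f_{ij})<1+\eta$ and exact separation $f_{ij}(x_i)-f_{ij}(x_j)=d(x_i,x_j)$, and rescales each one affinely into a two-point interpolant $h_{ij}$ of $f$ at $x_i,x_j$. It then combines these in lattice (Stone--Weierstrass) fashion as $h=\max_i\min_{j\neq i}h_{ij}$. This $h$ interpolates $f$ exactly on the finite set but only satisfies $\Lip(h)<1+\eta$. A final division by $1+\eta$ is therefore needed, which costs an error $\eta\abs{f(x_i)}$ and is why the auxiliary bound $C>\max_i\abs{f(x_i)}$ appears.

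You instead imitate McShane's formula $\min_i\pare{g(x_i)+d(\cdot,x_i)}$, replacing each distance cone by the locally flat surrogate $\phi_i=\max_j f_{ij}$. This surrogate is dominated by $d(\cdot,x_i)$, vanishes at $x_i$, and exceeds $d(x_i,x_k)-\ep$ at every $x_k$. What your approach buys:
\begin{itemize}
\item It stays inside the unit ball from the start, so it needs neither the alternative form of the definition nor a rescaling step.
\item It gives the one-sided estimate $g(x_k)-\ep<h(x_k)\leq g(x_k)$.
\item The $\phi_i$ depend only on the finite set and $\ep$, not on $g$. The same family therefore gives a locally flat approximate McShane extension of every $1$-Lipschitz $g$ at once, whereas the paper's $h_{ij}$ are tailored to $f$.
\end{itemize}
In exchange, the paper's scheme gives exact interpolation before normalisation. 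It is the classical lattice interpolation argument, which needs nothing beyond two-point interpolants.
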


\begin{proof}
(ii)$\Rightarrow$(i): Fix $x\neq y\in M$ and consider the $1$-Lipschitz function $f(p)=d(x,p)$. By (ii), for every $\ep>0$ there exists $g\in\lip(M)$ with $\Lip(g)\leq 1$ such that $\abs{f(x)-g(x)}<\ep/2$ and $\abs{f(y)-g(y)}<\ep/2$, therefore $g(y)-g(x)>f(y)-f(x)-\ep=d(x,y)-\ep$ as required.

(i)$\Rightarrow$(ii): Let $f\in\Lip(M)$ be $1$-Lipschitz, $x_1,\ldots,x_n$ be distinct points in $M$, and $\ep>0$. We wish to find $g\in\lip(M)$ with $\Lip(g)\leq 1$ such that $\abs{f(x_i)-g(x_i)}\leq\ep$ for all $i$. Fix $C>\max_{1\leq i\leq n}\abs{f(x_i)}$ and set $\eta=\ep/C$. By (i), for every pair $1\leq i,j\leq n$, $i\neq j$ there exists a function $f_{ij}\in\lip(M)$ with $\Lip(f_{ij})<1+\eta$ and $f_{ij}(x_i)-f_{ij}(x_j)=d(x_i,x_j)$. Setting
$$
h_{ij}(x) = \frac{f(x_i)-f(x_j)}{d(x_i,x_j)}\cdot (f_{ij}(x)-f_{ij}(x_i))+f(x_i)
$$
yields $h_{ij}\in\lip(M)$ with $\Lip(h_{ij})\leq\Lip(f_{ij})<1+\eta$, $h_{ij}(x_i)=f(x_i)$, and $h_{ij}(x_j)=f(x_j)$. Now let
$$
h(x) = \max_{1\leq i\leq n} \min_{\substack{1\leq j\leq n \\ j\neq i}} h_{ij}(x) .
$$
Since $\lip(M)$ is closed under taking pairwise maxima and minima, $h$ is locally flat and $\Lip(h)<1+\eta$. Moreover, $h(x_i)=f(x_i)$ for all $1\leq i\leq n$. Finally, let $g=h/(1+\eta)$. Then $g$ is locally flat, $1$-Lipschitz, and
$$
\abs{f(x_i)-g(x_i)} = \pare{1-\frac{1}{1+\eta}}\abs{f(x_i)} \leq \eta\abs{f(x_i)} \leq \ep
$$
as needed.
\end{proof}

The separation property turns out to be crucial for the set of equivalences presented in this text; the link will be provided by Theorem \ref{th:spu dual} below. But even without that context, its reformulation in terms of pointwise density makes it natural to ask for which metric spaces it holds. A (rather strong) necessary condition is given by pure unrectifiability.

\begin{proposition}\label{pr:spu p1u}
Let $M$ be a metric space. If $\lip(M)$ separates points of $M$ uniformly, then $M$ is purely $1$-unrectifiable.
\end{proposition}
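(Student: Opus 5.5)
We argue by contraposition. Suppose $M$ is not purely $1$-unrectifiable. By Theorem \ref{th:p1u kirchheim}, $M$ then contains a curve fragment. That is, there are a compact set $K\subset\RR$ with $\HH^1(K)>0$ and a bi-Lipschitz map $\varphi:K\to M$, say with $a\abs{s-t}\leq d(\varphi(s),\varphi(t))\leq b\abs{s-t}$. We will show that $\lip(M)$ cannot separate the points of $M$ uniformly. The idea is to test the separation property on two points of $\varphi(K)$ that lie at the ends of a piece of $K$ of almost full density. Pulled back to $K$, a locally flat function has zero derivative at every point of $K$, so it cannot gain much along such a piece.

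\textbf{Choosing the two points.} By the Lebesgue density theorem, we pick a density point of $K$. Then for any $\delta>0$ there is an interval $[s,t]$ with $s,t\in K$ and $\HH^1([s,t]\setminus K)<\delta(t-s)$. This gives the two test points $x=\varphi(s)$ and $y=\varphi(t)$.

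\textbf{Local flatness pulls back.} Let $f\in\lip(M)$ with $\Lip(f)\leq1$ and set $g=f\circ\varphi$. Then $g$ is Lipschitz on $K$ with constant at most $b$. Moreover, $g$ is locally flat at every point of $K$, since $\Lip(g\restrict_{K\cap[u-r,u+r]})\leq b\,\Lip(f\restrict_{B(\varphi(u),br)})\to0$ as $r\to0$. We extend $g$ to $[s,t]$ by linear interpolation on the complementary gaps of $K$; the extension $G$ is still $b$-Lipschitz. At each $u\in K$ that is not an endpoint of a gap and at which $G$ is differentiable, we have $G'(u)=0$. Indeed, near such a $u$ the points of $K$ approximate the nearby points, and the difference quotients of $G$ near $u$ are controlled by those of $g$ on $K\cap[u-r,u+r]$, which tend to $0$ by local flatness; this holds also across the small gaps near $u$. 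There are only countably many gap endpoints, so $G'=0$ almost everywhere on $K\cap[s,t]$. By the fundamental theorem of calculus,
$$
\abs{f(y)-f(x)}=\abs{G(t)-G(s)}\leq b\,\HH^1([s,t]\setminus K)<b\delta(t-s)\leq \frac{b\delta}{a}\,d(x,y).
$$

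\textbf{Conclusion.} We choose $\delta<a/(2b)$. Then every $1$-Lipschitz $f\in\lip(M)$ satisfies $f(y)-f(x)\leq \tfrac12 d(x,y)$. Taking $\ep=\tfrac12 d(x,y)>0$ in Definition \ref{def:spu}, no such $f$ achieves $f(y)-f(x)>d(x,y)-\ep$, so $\lip(M)$ does not separate points uniformly. The step needing most care is the claim that $G'=0$ almost everywhere on $K$, that is, checking that the linear interpolation across the gaps does not destroy the local flatness inherited from $K$. Once that is in place, the rest is bookkeeping with the density estimate.
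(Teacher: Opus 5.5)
Your proof is correct. It has the same skeleton as the paper's proof: a curve fragment, a density point, the pullback $g=f\circ\varphi$, and the fundamental theorem of calculus with $G'=0$ on $K$. The difference is in how the extension $G$ is obtained.

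\textbf{Paper's route.} The paper invokes the Di Marino--Gigli--Pratelli theorem (Theorem \ref{th:locally_flat_extension}) to get an extension that is locally flat at \emph{every} point of $K$. This makes ``$G'(t)=0$ for every $t\in K$'' immediate. The cost is reliance on a nontrivial black box and a factor $2$ lost in the Lipschitz constant, which is why the paper needs the threshold $\frac{1}{4D}$.

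\textbf{Your route.} Your linear interpolation across the gaps is elementary and keeps $\Lip(G)\leq b$. It only gives $G'=0$ off the countable set of gap endpoints, but that is all the integral estimate needs. Your check across the gaps is sound: for $v$ in a gap $(\alpha,\beta)$ with $u<\alpha$, $G(v)-G(u)$ is a convex combination of $g(\alpha)-g(u)$ and $g(\beta)-g(u)$. Moreover $\beta\to u$ as $v\to u$, because $u$ is not a gap endpoint and so $K$ accumulates at $u$ from that side.

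\textbf{A simplification.} The step you flag as delicate is actually automatic for \emph{any} Lipschitz extension, e.g.\ McShane's. Let $u$ be a non-isolated point of $K$ at which $G$ is differentiable, and compute the derivative along a sequence $v_n\to u$ in $K$. Local flatness of $g$ at $u$ gives $G'(u)=\lim_n\frac{g(v_n)-g(u)}{v_n-u}=0$. Since $K$ has only countably many isolated points and $G$ is differentiable almost everywhere, $G'=0$ a.e.\ on $K$. So your approach shows that the locally flat extension theorem is not really needed for this proposition.
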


\begin{proof}
Suppose that $M$ is not purely $1$-unrectifiable, that is, it contains a curve fragment. We will show that there exist $x\neq y\in M$ such that $f(y)-f(x)\leq\frac{1}{2}d(x,y)$ for every $1$-Lipschitz, locally flat $f:M\to\RR$. It then follows that $\lip(M)$ does not separate points of $M$ uniformly.

By assumption there exist a subset $K\subset\RR$ with positive measure and a bi-Lipschitz embedding $\varphi:K\to M$. Let $D=\Lip(\varphi)\Lip(\varphi^{-1})$ be the bi-Lipschitz distortion of $\varphi$. By Lebesgue's density theorem, we may find $p<q$ in $K$ such that $\HH^1([p,q]\setminus K)\leq\frac{1}{4D}(q-p)$. Let us see that $x=\varphi(p)$, $y=\varphi(q)$ will do. Indeed, let $f\in\lip(M)$ be $1$-Lipschitz, and set $g=f\circ\varphi\in\lip(K)$. By Theorem \ref{th:locally_flat_extension}, $g$ can be extended to a function $G\in\Lip(\RR)$ that is locally flat at every point of $K$ and moreover has Lipschitz constant $\Lip(G)\leq 2\Lip(g)\leq 2\Lip(\varphi)$. Then
$$
f(y)-f(x) = G(q)-G(p) = \int_p^q G'(t)\,dt
$$
by the fundamental theorem of calculus. But $G'(t)=0$ for every $t\in K$, so
$$
f(y)-f(x) \leq \Lip(G)\cdot\HH^1([p,q]\setminus K) \leq \Lip(G)\frac{q-p}{4D} \leq 2\Lip(\varphi)\frac{\Lip(\varphi^{-1})d(x,y)}{4D} = \frac{1}{2}d(x,y) .
$$
This ends the proof.
\end{proof}

Whether the converse of Proposition \ref{pr:spu p1u} holds is, to the best of our knowledge, an open question at the time of writing.

\begin{question}\label{q:spu p1u}
Let $M$ be a purely $1$-unrectifiable metric space. Does $\lip(M)$ separate points of $M$ uniformly?
\end{question}

Question \ref{q:spu p1u} is known to have a positive answer in some particular cases. For instance, it holds when $M$ is any metric space with null $1$-Hausdorff measure (note that any such $M$ trivially satisfies the definition of pure $1$-unrectifiability).

\begin{proposition}
Let $M$ be a metric space. If $\HH^1(M)=0$, then $\lip(M)$ separates points of $M$ uniformly.
\end{proposition}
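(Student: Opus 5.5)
Fix $x\neq y$ in $M$ and $\ep>0$. The plan is to copy the construction of Example \ref{ex:examples_lip}\ref{ex:real_null}, with the distance function $t\mapsto d(x,t)$ playing the role of the coordinate on $\RR$. Put $\phi(p)=d(x,p)$. This map $\phi:M\to\RR$ is $1$-Lipschitz, and $1$-Lipschitz maps do not increase $\HH^1$. Since $\HH^1(M)=0$, it follows that $\HH^1(\phi(M))=0$. Then $\phi(M)$ can be covered by an open set $U\subset\RR$ with $\HH^1(U)<\ep$. Define
$$
h(s)=\int_0^s (1-\chi_U(t))\,dt ,\qquad f=h\circ\phi .
$$
Both $h$ and $\phi$ are $1$-Lipschitz, so $f$ is $1$-Lipschitz. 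For the separation estimate, note that $\phi(x)=0$ and $\phi(y)=d(x,y)$. Therefore
$$
f(y)-f(x)=\HH^1\bigl([0,d(x,y)]\setminus U\bigr)>d(x,y)-\ep .
$$

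The remaining step is to check that $f$ is locally flat, and this is where some care is needed. Fix $p\in M$. The point $\phi(p)$ lies in $U$, so some open interval $I$ with $\phi(p)\in I\subset U$ exists, and $h$ is constant on $I$. Since $\phi$ is continuous, $\phi(B(p,r))\subset I$ for all small $r$. Hence $f$ is constant on $B(p,r)$, so its asymptotic Lipschitz constant at $p$ is $0$. This gives $f\in\lip(M)$ with $\Lip(f)\leq 1$ and $f(y)-f(x)>d(x,y)-\ep$, which is exactly the condition in Definition \ref{def:spu}.

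The one point that could be an obstacle is the claim $\HH^1(\phi(M))=0$. It follows directly from the definition of Hausdorff measure: $\phi$ is $1$-Lipschitz, so it maps any cover of $M$ by sets of small diameter to a cover of $\phi(M)$ with no larger diameters. No separability or completeness of $M$ is needed. The rest of the argument is the same computation as in the example on $\RR$.
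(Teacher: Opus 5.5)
Your proof is correct and follows essentially the same route as the paper's. Both push $M$ into $\RR$ via $p\mapsto d(x,p)$, use that $1$-Lipschitz maps do not increase $\HH^1$, and compose with the function from Example~\ref{ex:examples_lip}\ref{ex:real_null}. The only difference is that you unwind the example and check local constancy of $h\circ\phi$ directly, whereas the paper cites the example on $g(M)$ and uses that composing a locally flat function with a Lipschitz map stays locally flat.
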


\begin{proof}
Fix points $x,y\in M$ and $\ep>0$. Let $g\in\Lip(M)$ denote the $1$-Lipschitz function given by $g(p)=d(x,p)$. Then the set $g(M)\subset\RR$ satisfies $\HH^1(g(M)) \leq \Lip(g)\cdot\HH^1(M) = 0$. Therefore, the construction from Example \ref{ex:examples_lip}\ref{ex:real_null} shows that $\lip(g(M))$ separates points of $g(M)$ uniformly. Thus we may find a $1$-Lipschitz, locally flat function $f\in\lip(g(M))$ such that
$$
f(g(y))-f(g(x)) > \abs{g(y)-g(x)}-\ep = d(x,y)-\ep .
$$
So the $1$-Lipschitz, locally flat function $f\circ g\in\lip(M)$ separates $x$ and $y$ by their full distance up to $\ep$, as needed.
\end{proof}

Another particular situation where Question \ref{q:spu p1u} is known to have a positive answer is that of snowflake metrics. Recall that, given a metric space $M$ and a real number $\alpha\in (0,1)$, the \emph{snowflake} $M^\alpha$ is the metric space $(M,d_{M^\alpha})$ endowed with the metric $d_{M^\alpha}=d^\alpha$, that is, $d_{M^\alpha}(x,y)=d(x,y)^\alpha$ for $x,y\in M$. It is readily checked that $d^\alpha$ is indeed a metric on $M$. It is also not too hard to verify directly that every snowflake is purely $1$-unrectifiable.

\begin{proposition}
Let $M$ be a metric space and $\alpha\in(0,1)$. Then $\lip(M^\alpha)$ separates points of $M^\alpha$ uniformly.
\end{proposition}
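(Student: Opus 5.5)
Fix $x\neq y$ in $M$ and $\ep>0$. Since separating $x$ from $y$ only requires a good function, the plan is to use the function $p\mapsto d(x,p)$ transported to the real line, as in the proof for $\HH^1(M)=0$. We then replace the identity of $\RR$ by a \emph{snowflake-adapted} truncation that is flat at small scales. Concretely, let $D=d(x,y)$ and fix a small $\delta>0$. Define the $1$-Lipschitz function $\phi_\delta:[0,\infty)\to[0,\infty)$ by $\phi_\delta(t)=\max\set{0,t-\delta}$ if needed. The cleaner choice is to consider the function $h:[0,\infty)\to\RR$ given by $h(t)=\max\set{t^\alpha,\delta^{\alpha-1}t}$ for small $t$ and $h(t) = t^\alpha$ otherwise. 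This idea is the wrong direction, so the real plan is as follows.

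Work directly with $f(p)=\psi(d(x,p))$, where $\psi:[0,\infty)\to\RR$ is the piecewise linear interpolation of $t\mapsto t^\alpha$ at the nodes $t_k=k\eta$ for $k=0,1,2,\dots$, with $\eta>0$ small. First check that $f$ is Lipschitz for $d^\alpha$ with constant at most $1$. The function $\psi$ is concave and agrees with $t^\alpha$ at the nodes, so $\psi\leq t^\alpha$; moreover $\psi$ is concave with $\psi(0)=0$, hence subadditive. Therefore $\abs{\psi(s)-\psi(t)}\leq\psi(\abs{s-t})\leq\abs{s-t}^\alpha$, and then $\abs{f(p)-f(q)}\leq\abs{d(x,p)-d(x,q)}^\alpha\leq d(p,q)^\alpha$. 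Second, check local flatness with respect to $d^\alpha$. The function $\psi$ is Lipschitz in the Euclidean sense with constant $L=\eta^{\alpha-1}$, so
$$
\abs{f(p)-f(q)}\leq L\,d(p,q) = L\,d(p,q)^{1-\alpha}\,d_{M^\alpha}(p,q).
$$
Hence on any $d^\alpha$-ball of radius $r$ the Lipschitz constant of $f$ is at most $L(2r)^{(1-\alpha)/\alpha}\to0$. In fact $f$ is even uniformly locally flat. Third, check the separation: $f(y)-f(x)=\psi(D)-\psi(0)=\psi(D)$, and $\psi(D)\to D^\alpha=d_{M^\alpha}(x,y)$ as $\eta\to0$, since the interpolants converge uniformly on $[0,D]$ to the continuous function $t^\alpha$. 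Choosing $\eta$ small enough gives $f(y)-f(x)>d_{M^\alpha}(x,y)-\ep$.

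The only point needing care is the first step, namely the subadditivity of the interpolant. This follows from concavity: the interpolant of a concave function at equally spaced nodes is concave, and a concave function with value $0$ at $0$ is subadditive. There is in fact no real obstacle; the key observation is that any Euclidean-Lipschitz function is automatically locally flat for the snowflake metric, and concavity secures the $1$-Lipschitz bound.
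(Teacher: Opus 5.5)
Your proof is correct. It uses the same key observation as the paper: anything Lipschitz for $d$ is uniformly locally flat for $d^\alpha$. It also has the same shape, $f=\psi\circ d(x,\cdot)$ for a one-dimensional profile $\psi$. The differences are the choice of $\psi$ and how the $1$-Lipschitz bound for $d^\alpha$ is verified.

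The paper uses $\psi(t)=\min\set{t,D}D^{\alpha-1}$ with $D=d(x,y)$. It checks $\abs{f(p)-f(q)}\leq d(p,q)^\alpha$ by splitting into the cases $d(p,q)\leq D$ and $d(p,q)\geq D$, and gets exact separation $f(y)-f(x)=D^\alpha$. You use the mesh-$\eta$ interpolant of $t^\alpha$ and get $f(y)-f(x)\geq D^\alpha-\eta^\alpha$. You derive the bound from concavity and subadditivity. The step $\abs{\psi(s)-\psi(t)}\leq\psi(\abs{s-t})$ also needs $\psi$ nondecreasing; this is clear, but you should state it.

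Your argument is the more conceptual one. It shows that any nondecreasing, concave, Euclidean-Lipschitz $\psi$ with $\psi(0)=0$ and $\psi(t)\leq t^\alpha$ works, and the paper's profile is such a function. Also, your $\psi$ does not depend on $y$, so a single $f$ separates $x$ from every point up to $\eta^\alpha$ at once.

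The paper's argument is shorter and gives exact separation, but you can recover exactness too. Taking $\eta=D$ makes $D$ a node, so $\psi(D)=D^\alpha$. On $[0,D]$ your $\psi$ then coincides with the paper's profile, which simply truncates at height $D^\alpha$ instead of continuing to interpolate.

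In a written version, delete the abandoned attempt in your first paragraph.
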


\begin{proof}
First note that every function $f\in\Lip(M)$ is uniformly locally flat with respect to the snowflake metric of $M^\alpha$: indeed,
$$
\frac{\abs{f(x)-f(y)}}{d_{M^\alpha}(x,y)} \leq \frac{\Lip(f)\cdot d(x,y)}{d(x,y)^\alpha} = \Lip(f)\cdot d(x,y)^{1-\alpha}
$$
converges uniformly to $0$ as $d(x,y)\to 0$.

Fix $x\neq y\in M$ and set
\begin{equation}\label{eq:snowflake_f}
f(p)=\frac{\min\set{d(x,p),d(x,y)}}{d(x,y)^{1-\alpha}}
\end{equation}
for $p\in M$. The numerator of \eqref{eq:snowflake_f} is clearly a $1$-Lipschitz function with respect to the original metric of $M$, so $f$ is (uniformly) locally flat as a function on $M^\alpha$. It is also clear that $f(x)=0$ and $f(y)=d(x,y)^\alpha$. Now, given $p,q\in M$, if $d(p,q)\leq d(x,y)$ then we have
$$
\abs{f(p)-f(q)} \leq \frac{d(p,q)}{d(x,y)^{1-\alpha}} \leq \frac{d(p,q)^\alpha d(x,y)^{1-\alpha}}{d(x,y)^{1-\alpha}} = d(p,q)^\alpha ,
$$
and if $d(p,q)\geq d(x,y)$ then, since the numerator of \eqref{eq:snowflake_f} takes values between $0$ and $d(x,y)$,
$$
\abs{f(p)-f(q)} \leq \frac{d(x,y)}{d(x,y)^{1-\alpha}} = d(x,y)^\alpha \leq d(p,q)^\alpha .
$$
Thus, in any case $\abs{f(p)-f(q)} \leq d(p,q)^\alpha$ and $f$ is $1$-Lipschitz as an element of $\lip(M^\alpha)$, while $f(y)-f(x)=d(x,y)^\alpha$. This ends the proof.
\end{proof}

The most involved partial answer to Question \ref{q:spu p1u} known to date is the compact case. Although it was first stated and established in \cite{AGPP} by the author, together with Gartland, Petitjean and Proch\'azka, it draws heavily from ideas in Bate's paper \cite{Bate} (and it can, indeed, be deduced in a relatively direct manner from some intermediate results in \cite{Bate}) and so one may say that it was essentially proved by Bate. This compact case will also be the crucial step in the arguments in the forthcoming Section \ref{sec:compact}.

\begin{theorem}[{\cite[Theorem A]{AGPP}}]
\label{th:agpp spu}
Let $M$ be a compact, purely $1$-unrectifiable metric space. Then $\lip(M)$ separates points of $M$ uniformly.
\end{theorem}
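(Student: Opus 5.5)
Fix $x\neq y$ in $M$ and $\ep>0$; after rescaling assume $d(x,y)=1$. The goal is a $1$-Lipschitz $f\in\lip(M)$ with $f(y)-f(x)>1-\ep$. The strategy is to run Bate's decomposition argument on the single function $g(p)=d(x,p)$, which already separates $x$ and $y$ exactly, and modify it near ``most'' points so that it becomes locally flat while losing at most $\ep$ along every pair of points. Compactness is used to make everything finitary: locally flat functions on compact $M$ are uniformly locally flat, and $M$ is covered by finitely many small balls at each scale.

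The first step would be a ``purely unrectifiable implies no good curves'' lemma, in the spirit of Bate. Since $M$ contains no curve fragment (Theorem \ref{th:p1u kirchheim}), for every $\delta>0$ the following holds. If a $1$-Lipschitz curve fragment in a larger space, or a sequence of discrete paths in $M$, has $g$ increasing at rate close to $1$ on a set of large measure, then we would get a positive-measure bi-Lipschitz piece in $M$ in the limit. The limit is taken via Arzel\`a--Ascoli, using compactness of $M$. This would be a contradiction.

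I would make this quantitative as follows. Consider finite $\eta$-chains $x=p_0,\dots,p_n=y$ in $M$, with steps at most $\eta$. Define the ``$\eta$-rectifiable gain'' as the maximal total increment of $g$ obtainable along segments where $g$ increases at slope at least $1-\delta$. The claim to prove is that this gain tends to $0$ as $\eta\to0$; otherwise compactness produces a curve fragment. This is the main obstacle. Discrete chains need not converge to curves with positive-measure good sets, so I would follow Bate's approach of parametrizing by $g$-values and using a Lebesgue density argument on the image in $\RR$. This is analogous to Proposition \ref{pr:spu p1u} run backwards.

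Given this, I would define $f$ by a path-metric construction. Put
$$
f(p)=\inf\Big\{\sum_i \rho(p_{i-1},p_i)\Big\},
$$
taken over $\eta$-chains from $x$ to $p$, where $\rho(a,b)=d(a,b)$ if $d(a,b)\geq\eta$ and $\rho(a,b)=\max\{0,\,g(b)-g(a)-(1-\delta)d(a,b)\}^+$-type cheap cost at small scales. Equivalently, small steps are charged only for the excess of $g$ over slope $1-\delta$.

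Then I would verify three properties. First, $f\leq d(x,\cdot)$, so $f$ is $1$-Lipschitz. Second, at scales below $\eta$ the increments of $f$ are bounded by $\delta$ times distance plus rectifiable gain, which yields local flatness after letting the parameters depend on scale. To make this work I would take an infimum-type combination $\sum_k 2^{-k}f_k$, or a limit over $\eta_k\to0$ with $\Lip(f-f_k)$ small, using the closure property of $\lip$ listed earlier. Third, $f(y)\geq 1-\ep$ by the gain lemma. Finally, I would rescale by $1+\ep$ as in Proposition \ref{pr:spu eqv density} to get Lipschitz constant at most $1$.
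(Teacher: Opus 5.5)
\textbf{The central gap: the lower bound goes the wrong way.} Your construction does not give $f(y)\geq 1-\ep$, which is the entire content of the theorem, and your gain lemma points in the wrong direction. Your cost charges a small step only for the excess $g(b)-g(a)-(1-\delta)d(a,b)$. So small steps along which $g$ grows with slope at most $1-\delta$ are free. Showing that the steep gain is small, which is what pure unrectifiability suggests, would therefore make $f(y)$ \emph{small}.

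Here is a concrete failure. Let $M=[0,1]$ with the snowflake metric $d(s,t)=\abs{s-t}^{1/2}$, which is compact and purely $1$-unrectifiable. Take $x=0$ and $y=1$, so $g(p)=\sqrt{p}$. The chain $p_i=i/n$ has steps of length $n^{-1/2}<\eta$. Its $g$-slopes are $\sqrt{i}-\sqrt{i-1}\leq\sqrt{2}-1<1-\delta$ for $i\geq 2$. So its total cost is $\delta n^{-1/2}$, and hence $f(y)=0$.

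The gain lemma as stated is also false. Since $g=d(x,\cdot)$, every step leaving $x$ has slope exactly $1$, so shuttling between $x$ and a nearby point accumulates unbounded gain. The underlying problem is that pure $1$-unrectifiability only constrains rectifiable curves. Your $\eta$-chains in $M$ are jumps with no length control, so there is nothing for pure unrectifiability to act on inside $M$.

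\textbf{The missing idea: leave $M$.} The paper embeds $M$ in $\ell_\infty$ and works in the compact convex set $\Omega=\cl{\conv}(M)$. It defines $f$ as the infimum of $\int_\gamma g$ over Lipschitz curves $\gamma$ in $\Omega$. The weight $g$ equals $1$ off a neighbourhood $U_1$ of $M$, equals $1/n$ on $U_n\setminus U_{n+1}$, and vanishes on $M$. So the default cost is the full length, which dominates displacement because $\Omega$ is convex, and travel is cheap only near $M$. This gives the three properties as follows:
\begin{itemize}
\item $f$ is $1$-Lipschitz because $g\leq 1$ and $\Omega$ is convex.
\item $f$ is locally flat on $M$ because $g$ vanishes continuously at $M$.
\item The separation $f(y)\geq d(x,y)-\ep$ comes from choosing the $U_n$ so that every curve from $x$ to $y$ spends only a small part of its length where $g$ is small. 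This is where pure unrectifiability and compactness enter, since $\gamma\cap M$ is $\HH^1$-null for every Lipschitz curve $\gamma$.
\end{itemize}
Your scheme reverses these roles: it makes travel cheap by default and expensive only on steep steps.

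\textbf{A secondary gap in the limiting step.} Averaging $\sum_k 2^{-k}f_k$, where each $f_k$ has small-scale Lipschitz constant $\delta_k$, only bounds the asymptotic Lipschitz constant by $\sum_k 2^{-k}\delta_k>0$, not by $0$. Your alternative of a limit with $\Lip(f-f_k)\to 0$ is not something your construction provides. The iterative route in the paper works because Bate's Lemma \ref{lm:bate} controls difference quotients pairwise (condition (ii)), not just the global Lipschitz constant.
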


We will not attempt to provide a full, rigorous proof of Theorem \ref{th:agpp spu} here; we refer the interested reader to Section 2 of \cite{AGPP}. Instead, we will only sketch the main ideas and handwave the technical computations and difficulties off.

The construction behind Theorem \ref{th:agpp spu} is, at its core, just a suitable generalization of Example \ref{ex:examples_lip}\ref{ex:real_null}. In the example above, where $M$ is a purely $1$-unrectifiable subset of $\RR$, we define the desired separating function $f$ not just on $M$, but in its overspace $\RR$. In order to ensure that $f$ is locally flat at every point of $M$, it is constructed as the antiderivative of a function $g$ that vanishes in a neighborhood $U$ of $M$ -- namely, $g=\chi_{\RR\setminus U}$. By taking $U$ as small as possible, we guarantee that $g$ is large (equal to $1$) away from $M$, and this yields the required separation.

In order to transfer these ideas to a general compact metric setting, we proceed as follows. First, we take an overspace $\Omega$ of $M$ that is a compact convex subset of some Banach space; for instance, we embed $M$ isometrically into any Banach space ($\ell_\infty$ will do) and then consider its closed convex hull $\Omega=\cl{\conv}(M)$. Next, we define our ``indicator'' function $g$ in a slightly more general way than in Example \ref{ex:examples_lip}\ref{ex:real_null}: we consider a suitable nested sequence $(U_n)$ of neighborhoods of $M$, converging to $M$, and we let
$$
g(x) = \begin{cases}
1 &\text{, if $x\notin U_1$} \\
\frac{1}{n} &\text{, if $x\in U_n\setminus U_{n+1}$} \\
0 &\text{, if $x\in M$}
\end{cases} .
$$
Finally, in order to define the ``antiderivative'' $f$, we replace standard integration in $\RR$ with integration along curves. We fix one of the points $p$ we intend to separate, and let
\begin{equation}\label{eq:anti upper gradient}
f(x) = \inf\set{\int_\gamma g \,:\, \text{$\gamma$ is a Lipschitz curve in $\Omega$ starting at $p$ and ending at $x$}}
\end{equation}
so that $g$ is an upper gradient of $f$, rather than its derivative. One easily checks that $f$ is $1$-Lipschitz (because $\abs{g}\leq 1$ and $\Omega$ is convex) and that it is locally flat at every point of $M$ (because $g$ vanishes continuously at every point of $M$).

It only remains to make sure that $f(x)-f(p)>d(p,x)-\ep$ for a fixed (or for any given) $x\in M$ by choosing the sequence $(U_n)$ appropriately. We will only provide an intuitive reason why this should be possible. Since $M$ is purely $1$-unrectifiable, its intersection with any Lipschitz curve $\gamma$ in $\Omega$ is $\HH^1$-null. So it seems reasonable that we could choose neighborhoods $U_n$ that are very close to $M$ so that $\gamma\cap U_n$ has small measure. Then, for any curve $\gamma$ going from $p$ to $x$, most of $\gamma$ would be located in regions where $g$ is large (preferably outside of $U_1$), so that all integrals $\int_\gamma g$ considered in \eqref{eq:anti upper gradient} are large. Achieving this quantitatively is the main technical burden in the proof of Theorem \ref{th:agpp spu} and we shall omit the details here.

The construction sketched above is based on a similar construction found in Section 3 of Bate's paper \cite{Bate}, also for compact, purely $1$-unrectifiable $M$. Recall that, by Proposition \ref{pr:spu eqv density}, uniform separation of points by $\lip(M)$ is equivalent to approximability of $1$-Lipschitz functions by locally flat functions in the pointwise sense, which is equivalent to the uniform sense for compact $M$. Bate is instead concerned with approximability by functions whose asymptotic Lipschitz constants are uniformly small. Indeed, as a particular case of \cite[Lemma 3.4]{Bate} one has the following:

\begin{lemma}[{cf.~\cite[Lemma 3.4]{Bate}}]\label{lm:bate}
Let $M$ be a compact, purely $1$-unrectifiable metric space, $f\in\Lip(M)$ and $\ep>0$. Then there exist $g\in\Lip(M)$ and $\delta>0$ such that
\begin{align}
& \abs{g(x)-f(x)} \leq \ep \text{ for all $x\in M$,} \tag{i} \\
& \dfrac{\abs{g(x)-g(y)}}{d(x,y)} \leq \dfrac{\abs{f(x)-f(y)}}{d(x,y)} + \ep \text{ for all $x\neq y\in M$, and} \tag{ii} \\
& \dfrac{\abs{g(x)-g(y)}}{d(x,y)} \leq \ep \text{ for all $x\neq y\in M$ with $d(x,y)\leq\delta$.} \tag{iii}
\end{align}
\end{lemma}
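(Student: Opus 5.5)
One route is to derive Lemma \ref{lm:bate} from Theorem \ref{th:agpp spu}, which we may assume, plus a compactness and finite lattice construction in the spirit of the proof of Proposition \ref{pr:spu eqv density}. Fix $f\in\Lip(M)$ and $\ep>0$, and write $L=\Lip(f)$. The aim is a function $g$ built as a finite max--min of locally flat functions that interpolate $f$ exactly on a fine finite net. On a compact space, locally flat functions are uniformly locally flat, and this will give (iii). Condition (i) will follow from interpolation on the net together with Lipschitz control. Condition (ii) is the delicate part, since it is a \emph{pairwise} comparison with the slopes of $f$, not just a bound by $\Lip(f)$.

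First I would choose a finite $\eta$-net $x_1,\dots,x_n$ of $M$, with $\eta$ small in terms of $\ep$ and $L$. For each pair $i\neq j$, Theorem \ref{th:agpp spu} gives $f_{ij}\in\lip(M)$ with $\Lip(f_{ij})<1+\eta'$ and $f_{ij}(x_i)-f_{ij}(x_j)=d(x_i,x_j)$. As in Proposition \ref{pr:spu eqv density}, I would form
$$h_{ij}=\frac{f(x_i)-f(x_j)}{d(x_i,x_j)}(f_{ij}-f_{ij}(x_i))+f(x_i),$$
and then $h=\max_i\min_{j\neq i}h_{ij}$. This $h$ is locally flat, agrees with $f$ on the net, and satisfies $\Lip(h)\le (1+\eta')L$. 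Since $M$ is compact and $h$ is a finite lattice combination of locally flat functions, $h$ is uniformly locally flat, so some $\delta>0$ gives (iii) for $g=h$. Condition (i) holds because any $x$ lies within $\eta$ of some $x_i$, so $|h(x)-f(x)|\le |h(x)-h(x_i)|+|f(x_i)-f(x)|\le (2+\eta')L\eta\le\ep$.

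The main obstacle is (ii). For pairs $x,y$ with $d(x,y)\le\delta$, (iii) already implies (ii). For pairs with $d(x,y)\ge\delta$, I would snap $x,y$ to net points $x_i,x_j$, which changes both difference quotients by at most about $4L\eta/\delta$. It would then suffice that $|h(x_i)-h(x_j)|=|f(x_i)-f(x_j)|$, which holds by interpolation. The problem is that $\delta$ depends on $h$, and $h$ depends on $\eta$, so this snapping argument is circular. To break the circularity, I would use two scales. Fix a first coarse net at scale $\eta_0$ and its function $h_0$, with flatness scale $\delta_0$. Pairs at distance $\ge\delta_0$ are handled by snapping only if $\eta_0\ll\delta_0$, which again fails in general. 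So instead I would define $g=\max_k\min_l$ over a carefully weighted family, or alternatively take $g$ as a convex combination $\theta f+(1-\theta)h$.

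The convex-combination option is the cleanest attempt. With $g=(1-\theta)h+\theta f$ and $\theta$ small, (iii) only gives the bound $\theta L+(1-\theta)\ep'$, which is fine if $\theta L<\ep/2$. For (ii), the term $\theta f$ contributes at most $\theta$ times the slope of $f$ between $x$ and $y$, so I would still need $|h(x)-h(y)|/d(x,y)\le |f(x)-f(y)|/d(x,y)+\ep$. This means building $h$ so that its difference quotient at far-apart pairs is dominated by that of $f$, which a single net does not obviously give. Failing that, I would fall back on Bate's original approach, \cite[Lemma 3.4]{Bate}. That approach carries out the curve-integral construction \eqref{eq:anti upper gradient} with a weight $g$ that is the local slope of $f$ away from thin neighborhoods $U_n$ of ``flat-needed'' regions, using pure $1$-unrectifiability to make the integral along every curve nearly match the increments of $f$. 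I regard this quantitative control along all curves as the genuine technical core.
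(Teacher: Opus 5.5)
As it stands, your proposal proves (i) and (iii) but not the lemma. You acknowledge that condition (ii) is never established. The max--min interpolant only satisfies $\abs{h(x)-h(y)}\le(1+\eta')\Lip(f)\,d(x,y)$, which bounds it by the global constant of $f$ rather than by its slope at the pair $x,y$. The snapping argument also cannot be repaired by reordering the choices. Since $h=f$ on the net, (iii) forces $\delta$ below the distance between any two net points across which $f$ has slope larger than $\ep$. So in general $\delta$ is smaller than the net scale $\eta$, whereas snapping needs $\eta\ll\ep\delta/\Lip(f)$. The convex combination $\theta f+(1-\theta)h$ just reduces to the same requirement on $h$. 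Your last paragraph names Bate's curve-integral construction but does none of its quantitative work. That construction is where the paper gets the lemma: it simply cites \cite[Lemma 3.4]{Bate} and gives no proof.

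The missing idea is a change of metric, after which your first route goes through. Let $L=\Lip(f)$ and set $\rho(x,y)=\abs{f(x)-f(y)}+\ep\,d(x,y)$. Then $\ep\,d\le\rho\le(L+\ep)\,d$, so $(M,\rho)$ is compact and still purely $1$-unrectifiable, and $f$ is $1$-Lipschitz for $\rho$. Apply Theorem~\ref{th:agpp spu} and Proposition~\ref{pr:spu eqv density} in $(M,\rho)$; the proof of that proposition is exactly your max--min construction, run in $\rho$ instead of $d$. This gives $g$, locally flat for $\rho$, with $\abs{g(x)-g(y)}\le\rho(x,y)$ for all $x,y$. It also gives $\abs{g-f}\le\ep/3$ on a finite $\ep/3$-net of $(M,\rho)$, hence $\abs{g-f}\le\ep$ everywhere.

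Now (ii) is literally the inequality $\abs{g(x)-g(y)}\le\rho(x,y)$, valid at all scales, so no circularity arises. For (iii), compactness makes $g$ uniformly locally flat for $\rho$: say $\abs{g(y)-g(z)}\le\frac{\ep}{L+\ep}\rho(y,z)$ whenever $\rho(y,z)\le\delta'$. Since $\rho\le(L+\ep)d$, this gives $\abs{g(y)-g(z)}\le\ep\,d(y,z)$ whenever $d(y,z)\le\delta:=\delta'/(L+\ep)$.

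This trick makes the lemma and Theorem~\ref{th:agpp spu} equivalent. So it is a legitimate proof only if the theorem is taken from its independent proof in \cite{AGPP}, not from the iteration of the lemma described after it.
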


Theorem \ref{th:agpp spu} can be proved directly by repeated application of Lemma \ref{lm:bate}. Indeed, set $g_0=f$ and apply the lemma iteratively to $f=g_{n-1}$ and $\ep=2^{-n}\ep$ to obtain a new function $g_n$. Then the sequence $(g_n)$ converges uniformly by (i), and the limit $g$ is a uniform approximation of $f$ that is $(1+\ep)$-Lipschitz by (ii) and locally flat by (iii). The full details of the argument can be found in \cite[Proposition 6.15]{FJLPPQ}. Note that Lemma \ref{lm:bate} is in fact \textit{stronger} than Theorem \ref{th:agpp spu}, as condition (ii) is stronger than simply $\Lip(g)\leq\Lip(f)+\ep$.

\section{Lipschitz-free spaces}
\label{sec:lipfree}

In this section, we introduce Lipschitz and Lipschitz-free spaces. Some of the content and arguments here are taken from \cite[Chapters 1 to 3]{Weaver2}, \cite[Section 2]{CDW}, and \cite[Chapter 2]{Aliaga_thesis}.

From this point onward, we consider metric spaces $M$ to be \emph{pointed}, meaning that we designate a certain point of $M$ as a \emph{base point} (the choice of base point is completely inconsequential). We will usually denote the base point of $M$ by $0$ or $0_M$. Then we consider the restricted space of Lipschitz functions
$$
\Lip_0(M) = \set{f\in\Lip(M) \,:\, f(0)=0} .
$$
This is usually called the \emph{Lipschitz space} over $M$. Adding the constraint $f(0)=0$ has the effect that the only function with $\Lip(f)=0$ is $f\equiv 0$ (while any constant function will do in $\Lip(M)$). It follows that $\Lip(\cdot)$ defines a norm on the Lipschitz space, which we will call the Lipschitz norm. In fact, it is a complete norm, and so $\Lip_0(M)$ is a Banach space. Lipschitz spaces can be understood as the metric counterpart of spaces $C(K)$ of continuous functions on a compact Hausdorff space for topology, or spaces $L^\infty(\Omega)$ of measurable functions for measure theory.

As is usual when dealing with function spaces, the simplest continuous linear functionals on $\Lip_0(M)$ (i.e.~elements of its topological dual $\Lip_0(M)^*$) are the evaluation operators $\delta(x)$ for $x\in M$, given by $\duality{f,\delta(x)}=f(x)$. This defines a mapping $\delta:M\to\Lip_0(M)^*$ that is an isometric embedding. Indeed,
$$
\norm{\delta(x)-\delta(y)} = \sup\set{\duality{f,\delta(x)-\delta(y)}:f\in B_{\Lip_0(M)}} = \sup\set{f(x)-f(y):f\in B_{\Lip_0(M)}}
$$
(here and hereafter, $B_X$ denotes the closed unit ball of a Banach space $X$ as is customary). The right-hand side is bounded by $d(x,y)$ by the definition of Lipschitz constant, and a trivial computation shows that this value is attained for a function of the form $d(\cdot,y)-d(0,y)$; thus, $\norm{\delta(x)-\delta(y)}=d(x,y)$. Of course, $\delta(0)=0$ is just the null functional as $f(0)=0$ for all $f\in\Lip_0(M)$.

\begin{definition}
The \emph{Lipschitz-free space} over $M$ is the closed subspace of $\Lip_0(M)^*$ given by
$$
\lipfree{M} = \cl{\lspan}\,\delta(M) = \cl{\lspan}\set{\delta(x)\,:\,x\in M} .
$$
\end{definition}

The Lipschitz-free space $\lipfree{M}$ can be regarded as a (\textit{the}) canonical linearization of the metric space $M$ in the following sense.

\begin{theorem}\label{th:lipfree_universal}
$\lipfree{M}$ is the unique (up to isometry) Banach space with these properties:
\begin{enumerate}[label={\upshape{(\alph*)}}]
\item \label{th:lipfree_universal:embedding} there is an isometric embedding $\delta:M\to\lipfree{M}$, with $\delta(0)=0$, such that $\delta(M)\setminus\set{0}$ is linearly dense and linearly independent, and
\item \label{th:lipfree_universal:linearization} given any Banach space $X$ and any Lipschitz function $f:M\to X$ with $f(0)=0$, there exists a (unique) bounded linear operator $\bar{f}:\lipfree{M}\to X$ that extends $f$ in the sense that $\bar{f}\circ\delta=f$, and $\norm{\bar{f}}=\Lip(f)$.
\end{enumerate}
\end{theorem}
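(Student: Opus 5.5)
The plan is to verify (a) and (b) directly for $\lipfree{M}=\cl{\lspan}\,\delta(M)\subset\Lip_0(M)^*$, and then derive uniqueness from the universal property in the usual way.

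For (a), the computation above already shows that $\delta$ is an isometric embedding with $\delta(0)=0$, and linear density holds by definition. For linear independence, take distinct nonzero points $x_1,\ldots,x_n$ and suppose $\sum_i a_i\delta(x_i)=0$. For each $j$, let $r>0$ be smaller than half the minimum distance among the points $0,x_1,\ldots,x_n$. Test the relation against the bump function $f_j(p)=\max\set{r-d(p,x_j),0}$, which is Lipschitz, vanishes at $0$ and at every $x_i$ with $i\neq j$, and satisfies $f_j(x_j)=r$. This gives $a_jr=0$, hence $a_j=0$.

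For (b), let $f:M\to X$ be Lipschitz with $f(0)=0$. On $\lspan\,\delta(M)$ define $\bar f\pare{\sum a_i\delta(x_i)}=\sum a_if(x_i)$; this is well defined by the linear independence in (a). The key estimate is $\norm{\sum a_if(x_i)}\leq\Lip(f)\norm{\sum a_i\delta(x_i)}$. To prove it, choose by Hahn--Banach some $x^*\in B_{X^*}$ that norms $\sum a_if(x_i)$. The function $g=x^*\circ f$ is real-valued, lies in $\Lip_0(M)$ and satisfies $\Lip(g)\leq\Lip(f)$. Then
$$
\norm{\sum a_if(x_i)}=\duality{g,\sum a_i\delta(x_i)}\leq\Lip(g)\norm{\sum a_i\delta(x_i)}.
$$
So $\bar f$ is bounded with $\norm{\bar f}\leq\Lip(f)$ and extends by continuity to all of $\lipfree{M}$. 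The reverse inequality holds because $\bar f\circ\delta=f$ and $\delta$ is an isometry, so
$$
\norm{f(x)-f(y)}\leq\norm{\bar f}\,d(x,y).
$$
Uniqueness of $\bar f$ follows from linearity, continuity and the density of $\lspan\,\delta(M)$.

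For uniqueness of the space, suppose $(Y,\iota)$ also satisfies (a) and (b). Applying (b) for $\lipfree{M}$ to $\iota$ gives an operator $\bar\iota:\lipfree{M}\to Y$ with $\norm{\bar\iota}=1$. Applying (b) for $Y$ to $\delta$ gives $\bar\delta:Y\to\lipfree{M}$ with $\norm{\bar\delta}=1$. The composites $\bar\delta\circ\bar\iota$ and $\bar\iota\circ\bar\delta$ fix $\delta(M)$ and $\iota(M)$ respectively. By linear density and continuity they are therefore the identities, so $\bar\iota$ is a surjective isometry intertwining $\delta$ and $\iota$.

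The only step with real content is the norm estimate in (b). It rests on the scalarization $x^*\circ f$, which reduces the vector-valued case to the definition of the norm of $\lipfree{M}$ as a subspace of $\Lip_0(M)^*$.
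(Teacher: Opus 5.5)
Your proof is correct and follows the paper's argument essentially step by step. This covers the scalarization $x^*\circ f$ for the norm estimate in (b), the reverse inequality via $\delta$ being an isometry, and the mutual-inverse argument with $\bar\iota,\bar\delta$ for uniqueness of the space; the only cosmetic difference is that you test linear independence with the bump functions $\max\set{r-d(\cdot,x_j),0}$, whereas the paper uses the distance functions $d(\cdot,\set{0}\cup E\setminus\set{x_k})$, and both work equally well.
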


Property \ref{th:lipfree_universal:linearization} can be summarized by means of the following commutative diagram:
$$
\xymatrix{
M \ar[d]_{\delta} \ar[r]^f & X \\
\lipfree{M} \ar@{.>}[ur]_{\bar{f}} &
}
$$

\begin{proof}
We have already shown that the evaluation map $\delta$ is an isometry and that $\delta(0)=0$. Given a finite set $E=\set{x_1,\ldots,x_n}\subset M\setminus\set{0}$, the linear independence of the $\delta(x_k)$ follows from considering the functions $f_k\in\Lip_0(M)$ given by $f_k(x)=d(x,\set{0}\cup E\setminus\set{x_k})$, as $f_k(x_j)\neq 0$ if and only if $k=j$. This establishes \ref{th:lipfree_universal:embedding}.

To prove \ref{th:lipfree_universal:linearization}, given $f:M\to X$ define $\bar{f}$ on $\delta(M)$ by $\bar{f}(\delta(x))=f(x)$ for $x\in M$, and then extend linearly to an operator $\bar{f}:\lspan\,\delta(M)\to X$ using linear independence. Fix an element $m=\sum_{k=1}^na_k\delta(x_k)$ of $\lspan\,\delta(M)$ and let $x^*\in B_{X^*}$ be such that $x^*(\bar{f}(m))=\norm{\bar{f}(m)}$. Note that $x^*\circ f\in\Lip_0(M)$ with $\Lip(x^*\circ f)\leq\Lip(f)$. Thus
\begin{align*}
\norm{\bar{f}(m)} = x^*(\bar{f}(m)) &= \sum_{k=1}^na_kx^*(\bar{f}(\delta(x_k))) \\
&= \sum_{k=1}^na_kx^*(f(x_k)) = \duality{x^*\circ f,m} \leq \norm{m}\Lip(x^*\circ f) \leq \norm{m}\Lip(f)
\end{align*}
and so $\bar{f}$ is a bounded linear operator with norm $\norm{\bar{f}}\leq\Lip(f)$. By density of $\lspan\,\delta(M)$, $\bar{f}$ can be extended to an operator $\lipfree{M}\to X$ with $\norm{\bar{f}}\leq\Lip(f)$. On the other hand, since $\delta$ is an isometry, we also have
$$
\norm{\bar{f}} = \sup_{m\in B_{\lipfree{M}}}\bar{f}(m) \geq \sup_{x\neq y\in M}\bar{f}\pare{\frac{\delta(x)-\delta(y)}{\norm{\delta(x)-\delta(y)}}} = \sup_{x\neq y\in M}\frac{f(x)-f(y)}{d(x,y)} = \Lip(f)
$$
hence $\norm{\bar{f}}=\Lip(f)$. Uniqueness of the extension $\bar{f}$ is obvious by linear density of $\delta(M)$.

To prove the uniqueness of $\lipfree{M}$, let $V$ be another Banach space and $e:M\to V$ an isometric embedding satisfying \ref{th:lipfree_universal:embedding} and \ref{th:lipfree_universal:linearization} in place of $\lipfree{M}$ and $\delta$, respectively. Applying \ref{th:lipfree_universal:linearization} to $e$ yields an operator $\bar{e}:\lipfree{M}\to V$ with $\bar{e}\circ\delta=e$ and $\norm{\bar{e}}=1$, and applying it to $\delta$ yields an operator $\bar{\delta}:V\to\lipfree{M}$ with $\bar{\delta}\circ e=\delta$ and $\norm{\bar{\delta}}=1$. For any $x\in M$ we have $\bar{\delta}\circ\bar{e}(\delta(x)) = \bar{\delta}(e(x)) = \delta(x)$ and $\bar{e}\circ\bar{\delta}(e(x)) = \bar{e}(\delta(x)) = e(x)$. Since $\delta(M)$ and $e(M)$ are linearly dense in $\lipfree{M}$ and $V$, respectively, $\bar{\delta}$ and $\bar{e}$ are inverse bijections, and since they are both non-expansive, they must be isometries.
\end{proof}

Part \ref{th:lipfree_universal:linearization} can be restated equivalently as follows. Let $\Lip_0(M;X)$ denote the space of $X$-valued Lipschitz functions on $M$ vanishing at $0$. Then $\Lip_0(M;X)$ is, for all purposes, identical to the space of bounded linear operators from $\lipfree{M}$ to $X$ (identifying $T:\lipfree{M}\to X$ with $T\circ\delta\in\Lip_0(M;X)$). Two particular cases of this fact yield important consequences. First, by taking $X=\RR$ it is immediate that $\lipfree{M}$, although defined as a subspace of the dual of $\Lip_0(M)$, is in fact a predual of $\Lip_0(M)$.

\begin{corollary}
\label{cr:lipfree_predual}
$\lipfree{M}^*$ is linearly isometric to $\Lip_0(M)$ under the mapping $\phi\mapsto\phi\circ\delta$. The corresponding weak$^\ast$ topology of $\Lip_0(M)$, when restricted to the unit ball $B_{\Lip_0(M)}$, coincides with the topology of pointwise convergence.
\end{corollary}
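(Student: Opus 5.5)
The first claim follows from Theorem \ref{th:lipfree_universal}\ref{th:lipfree_universal:linearization} with $X=\RR$. Given $\phi\in\lipfree{M}^*$, the function $\phi\circ\delta$ is Lipschitz, vanishes at $0$ because $\delta(0)=0$, and satisfies $\Lip(\phi\circ\delta)\leq\norm{\phi}$ since $\delta$ is an isometry. Conversely, given $f\in\Lip_0(M)$, the theorem provides a unique $\bar f\in\lipfree{M}^*$ with $\bar f\circ\delta=f$ and $\norm{\bar f}=\Lip(f)$. The maps $\phi\mapsto\phi\circ\delta$ and $f\mapsto\bar f$ are linear. They are mutually inverse by the uniqueness of the extension, which rests on the linear density of $\delta(M)$. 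Hence $\phi\mapsto\phi\circ\delta$ is a surjective linear isometry.

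For the topological claim, I would transport the weak$^\ast$ topology through this identification. A net $(f_i)$ converges weak$^\ast$ to $f$ exactly when $\duality{f_i,m}\to\duality{f,m}$ for every $m\in\lipfree{M}$. Taking $m=\delta(x)$ shows that weak$^\ast$ convergence implies pointwise convergence, on all of $\Lip_0(M)$.

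For the converse on $B_{\Lip_0(M)}$, suppose $(f_i)$ lies in the unit ball and $f_i\to f$ pointwise. Then $f$ is also $1$-Lipschitz and $f(0)=0$, so $f\in B_{\Lip_0(M)}$. By linearity, $\duality{f_i,m}\to\duality{f,m}$ for every $m\in\lspan\,\delta(M)$. For general $m\in\lipfree{M}$ and $\ep>0$, I would choose $m'$ in the span with $\norm{m-m'}<\ep$. Then
\[
\abs{\duality{f_i-f,m}}\leq\abs{\duality{f_i-f,m'}}+2\ep ,
\]
because $\norm{f_i-f}\leq 2$. This gives weak$^\ast$ convergence.

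Since both topologies on the ball are described by the same convergent nets, they coincide. Alternatively, one can note that the identity map from the ball with the weak$^\ast$ topology to the ball with the pointwise topology is continuous, and that the domain is compact by Banach--Alaoglu while the target is Hausdorff.

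The only mildly delicate point is the density and boundedness argument in the converse direction. It genuinely requires the restriction to bounded sets, because on unbounded sets the pointwise topology is strictly coarser than the weak$^\ast$ topology.
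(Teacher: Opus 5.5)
Your proof is correct. The first part is exactly the paper's argument; you just spell out the inverse map and the uniqueness step that the paper leaves implicit.

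For the topological part, the paper proves only that weak$^\ast$ convergence implies pointwise convergence. It then finishes with the compact-onto-Hausdorff argument that you list as your alternative. Your main route instead proves the converse directly, using density of $\lspan\,\delta(M)$ and the uniform bound $\norm{f_i-f}\leq 2$. This avoids Banach--Alaoglu and works verbatim on any norm-bounded subset, at the cost of a few more lines.

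Your closing remark that boundedness is genuinely needed is right for infinite $M$. For finite $M$, however, $\lspan\,\delta(M)=\lipfree{M}$ and the two topologies agree on all of $\Lip_0(M)$. This does not affect the proof.
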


\begin{proof}
The first statement follows from Theorem \ref{th:lipfree_universal} by taking $X=\RR$.
For the second statement, let $(f_i)$ be a net in $B_{\Lip_0(M)}$. If $f_i\wsconv f$ then, in particular, $f_i(x)=\duality{f_i,\delta(x)}\to\duality{f,\delta(x)}=f(x)$ for all $x\in M$, that is, $f_i\to f$ pointwise. Thus the identity $(B_{\Lip_0(M)},w^*)\to (B_{\Lip_0(M)},\tau_p)$ (where $\tau_p$ is the topology of pointwise convergence) is a continuous bijection from a compact onto a Hausdorff space, and therefore a homeomorphism. This shows that the weak$^*$ and pointwise topologies agree on $B_{\Lip_0(M)}$.
\end{proof}


Another important particular case of Theorem \ref{th:lipfree_universal}, first highlighted by Godefroy and Kalton in \cite{GodefroyKalton}, gives us the following.

\begin{corollary}
\label{cr:linearization}
Let $f:M\to N$ be a Lipschitz function between metric spaces such that $f(0_M)=0_N$. Then there exists a (unique) bounded linear operator $\widehat{f}:\lipfree{M}\to\lipfree{N}$ that extends $f$ in the sense that $\widehat{f}\circ\delta_M=\delta_N\circ f$, and $\norm{\widehat{f}}=\Lip(f)$.
\end{corollary}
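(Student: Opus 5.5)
The plan is to apply Theorem \ref{th:lipfree_universal}\ref{th:lipfree_universal:linearization} to a suitable Banach-space-valued Lipschitz map. Take $X=\lipfree{N}$ and consider the composition $g=\delta_N\circ f:M\to\lipfree{N}$. Since $f(0_M)=0_N$ and $\delta_N(0_N)=0$, we have $g(0_M)=0$.

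The first step is to show that $g$ is Lipschitz with $\Lip(g)=\Lip(f)$. This follows because $\delta_N$ is an isometry (Theorem \ref{th:lipfree_universal}\ref{th:lipfree_universal:embedding}). Indeed, for $x\neq y$ in $M$,
$$
\norm{g(x)-g(y)} = \norm{\delta_N(f(x))-\delta_N(f(y))} = d(f(x),f(y)) ,
$$
so the difference quotients of $g$ and of $f$ coincide pair by pair. Taking suprema gives $\Lip(g)=\Lip(f)$.

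The second step invokes part \ref{th:lipfree_universal:linearization} of Theorem \ref{th:lipfree_universal} with $X=\lipfree{N}$. It yields a bounded linear operator $\widehat{f}:=\bar{g}:\lipfree{M}\to\lipfree{N}$ satisfying $\widehat{f}\circ\delta_M=g=\delta_N\circ f$ and $\norm{\widehat{f}}=\Lip(g)=\Lip(f)$.

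For uniqueness, note that any bounded linear operator satisfying $\widehat{f}\circ\delta_M=\delta_N\circ f$ is determined on $\delta(M)$. Since $\delta(M)$ is linearly dense in $\lipfree{M}$, linearity and continuity force it to agree with $\bar{g}$; this is exactly the uniqueness clause of the theorem. There is no real obstacle here; the only point to verify carefully is the isometry step giving equality, not merely an inequality, of the Lipschitz constants.
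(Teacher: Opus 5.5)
Your proposal is correct and follows exactly the paper's proof, which applies Theorem~\ref{th:lipfree_universal}\ref{th:lipfree_universal:linearization} with $X=\lipfree{N}$ to the map $\delta_N\circ f$. Your explicit check that $\Lip(\delta_N\circ f)=\Lip(f)$ because $\delta_N$ is an isometry is a step the paper leaves implicit.
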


In other words, one can always complete the following commutative diagram:

$$
  \xymatrix{
    M \ar[d]_{\delta_M} \ar[rr]^f && N \ar[d]^{\delta_N} \\
	  \lipfree{M} \ar@{.>}[rr]_{\widehat{f}} && \lipfree{N}
  }
$$

\begin{proof}
Simply apply Theorem \ref{th:lipfree_universal} to $X=\lipfree{N}$ and the Lipschitz function $\delta_N\circ f:M\to\lipfree{N}$.
$$
\xymatrix{
M \ar[d]_{\delta_M} \ar[r]^f  & N \ar[r]^{\delta_N} & \lipfree{N} \\
\lipfree{M} \ar@{.>}[rru]_{\widehat{f} = \overline{\delta_N\circ f}}
}
$$
\end{proof}

The map $\widehat{f}$ is called the \emph{linearization} of $f$. It is easily checked that if $f$ is an isometry then so is $\widehat{f}$; more generally, if $f$ is a bi-Lipschitz mapping then $\widehat{f}$ is a linear isomorphism. Thus, bi-Lipschitz equivalent metric spaces have linearly isomorphic Lipschitz-free spaces.

\subsection{Lipschitz-free subspaces}

McShane's extension theorem guarantees that real-valued Lipschitz functions defined on a subset of $M$ can be extended to $M$ without increasing their Lipschitz constant (this is no longer true in general for $\CC$-valued, let alone vector-valued, Lipschitz functions). This admits the following equivalent statement in terms of Lipschitz-free spaces that plays a crucial role throughout most of the theory.

\begin{proposition}\label{pr:lipfree_subspace}
Let $N$ be a subset of $M$ containing $0$. Then $\lipfree{N}$ is linearly isometric to the subspace $\cl{\lspan}\,\delta(N)$ of $\lipfree{M}$.
\end{proposition}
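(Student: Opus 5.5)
The plan is to apply Corollary \ref{cr:linearization} to the inclusion map and then use McShane's theorem to show that the resulting operator is an isometry. Let $\iota:N\to M$ be the inclusion, which is an isometry with $\iota(0)=0$. Corollary \ref{cr:linearization} gives a bounded linear operator $\widehat{\iota}:\lipfree{N}\to\lipfree{M}$ with $\widehat{\iota}\circ\delta_N=\delta_M\circ\iota$ and $\norm{\widehat{\iota}}=1$. The operator is linear and continuous, and it maps $\lspan\,\delta_N(N)$ onto $\lspan\,\delta_M(N)$. So once $\widehat{\iota}$ is shown to be isometric, its range is closed and equals $\cl{\lspan}\,\delta_M(N)$, which is the claim.

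Norm non-increase is immediate from $\norm{\widehat{\iota}}=1$. By density it therefore suffices to prove $\norm{\widehat{\iota}(m)}\geq\norm{m}$ for finitely supported $m=\sum_{k=1}^n a_k\delta_N(x_k)$ with $x_k\in N$. I would compute $\norm{m}_{\lipfree{N}}$ by duality, using Corollary \ref{cr:lipfree_predual} (or just the definition of the norm in $\Lip_0(N)^*$):
$$\norm{m}=\sup\set{\textstyle\sum_k a_k f(x_k) : f\in B_{\Lip_0(N)}}.$$

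Given such an $f$, McShane's theorem provides $F\in\Lip(M)$ with $F\restrict_N=f$ and $\Lip(F)=\Lip(f)\leq1$. Since $0\in N$, we have $F(0)=f(0)=0$, so $F\in B_{\Lip_0(M)}$. Then
$$\duality{F,\widehat{\iota}(m)}=\sum_k a_kF(x_k)=\sum_k a_k f(x_k),$$
and hence $\norm{\widehat{\iota}(m)}\geq\sum_k a_k f(x_k)$. Taking the supremum over $f$ gives the reverse inequality.

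There is no real obstacle. The only point needing care is that $0\in N$, which is exactly what allows the McShane extension to remain in $\Lip_0(M)$. Extending the isometry from the span to the closure, and identifying the closed range with $\cl{\lspan}\,\delta_M(N)$, is routine.
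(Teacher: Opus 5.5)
Your proposal is correct and essentially the same as the paper's proof: your $\widehat{\iota}=\overline{\delta_M\circ\iota}$ is exactly the operator $T$ the paper gets by applying Theorem \ref{th:lipfree_universal} to $\delta_M\restrict_N$. Your isometry argument also matches the paper's, computing norms on finitely supported elements by duality, using McShane extensions (kept in $\Lip_0(M)$ because $0\in N$) for the reverse inequality, and then extending by density.
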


\begin{proof}
Apply Theorem \ref{th:lipfree_universal} to the restricted mapping $\delta_M\restrict_N:N\rightarrow\lipfree{M}$ to obtain a linear operator $T:\lipfree{N}\rightarrow\lipfree{M}$ such that the following diagram is commutative:
$$
\xymatrix{
N \ar[d]_{\delta_N} \ar[r]^{\delta_M} & \lipfree{M} \\
\lipfree{N} \ar@{.>}[ur]_T &
}
$$
i.e.~such that $T(\delta_N(x))=\delta_M(x)$ for any $x\in N$. Now let $m\in\lspan\,\delta_N(N)\subset\lipfree{N}$, say $m=\sum_{i=1}^n a_i\delta_N(x_i)$ where $a_i\in\RR$ and $x_i\in N$. Then $T(m)=\sum_{i=1}^n a_i\delta_M(x_i)$, and we have
\begin{align*}
\norm{m}_{\lipfree{N}} &= \sup_{f\in B_{\Lip_0(N)}}\duality{f,m} = \sup_{f\in B_{\Lip_0(N)}}\sum_{i=1}^n a_if(x_i) \\
\norm{T(m)}_{\lipfree{M}} &= \sup_{f\in B_{\Lip_0(M)}}\duality{f,T(m)} = \sup_{f\in B_{\Lip_0(M)}}\sum_{i=1}^n a_if(x_i)
\end{align*}
By McShane's theorem, every function in $B_{\Lip_0(N)}$ can be extended to a function in $B_{\Lip_0(M)}$. Therefore both suprema have the same value, and $\norm{T(m)}=\norm{m}$. Thus $T$ restricted to $\lspan\,\delta_N(N)$ is an isometry, whose range is obviously $\lspan\,\delta_M(N)$, so it extends uniquely to an isometry (which must be $T$) between their completions $\lipfree{N}$ and $\cl{\lspan}\,\delta_M(N)$.
\end{proof}

In accordance with Proposition \ref{pr:lipfree_subspace}, we will always identify $\lipfree{N}$ with $\cl{\lspan}\,\delta(N)\subset\lipfree{M}$ whenever $0\in N\subset M$. Under this identification, several natural questions arise concerning the relationship between the different Lipschitz-free spaces over subsets $N\subset M$. In particular, is the class of Lipschitz-free subspaces closed under intersections? The answer is yes, which seems to be intuitively obvious, but it is actually nontrivial and was unknown until fairly recently. The proof, although elementary, is a big detour so we will omit it; the interested reader can find it in \cite{AP_supports} (for $M$ with finite diameter) and \cite{APPP} (extension to the general case).

\begin{theorem}[{\cite[Theorem 2.1]{APPP}}]
\label{th:intersection}
Let $M$ be a complete metric space and let $\set{N_i \,:\, i\in I}$ be a family of closed subsets of $M$ containing $0$. Then
$$
\bigcap_{i\in I}\lipfree{N_i} = \mathcal{F}\pare{\bigcap_{i\in I}N_i} .
$$
\end{theorem}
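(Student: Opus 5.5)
The inclusion $\lipfree{\bigcap_i N_i}\subset\bigcap_i\lipfree{N_i}$ is immediate from Proposition \ref{pr:lipfree_subspace}, so the work is in the reverse inclusion. Write $K=\bigcap_i N_i$, which is closed and contains $0$. I would first record a duality description of Lipschitz-free subspaces. By McShane's theorem, the annihilator of $\lipfree{K}$ in $\Lip_0(M)=\lipfree{M}^*$ is exactly $\set{f\in\Lip_0(M) : f\restrict_K=0}$. Since $\lipfree{K}$ is closed, the Hahn--Banach theorem then gives
$$
\lipfree{K}=\set{m\in\lipfree{M} \,:\, \duality{f,m}=0 \text{ for all } f\in\Lip_0(M) \text{ with } f\restrict_K=0}.
$$
So I would fix $m\in\bigcap_i\lipfree{N_i}$ and $f\in\Lip_0(M)$ vanishing on $K$, and aim to show $\duality{f,m}=0$.

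The first reduction is to push the zero set of $f$ to a neighbourhood of $K$. Set $f_n=\max\{f-1/n,0\}-\max\{-f-1/n,0\}$. Each $f_n$ is $\Lip(f)$-Lipschitz, vanishes on the open set $\set{\abs{f}<1/n}\supset K$, and $f_n\to f$ pointwise. By Corollary \ref{cr:lipfree_predual}, bounded pointwise convergence is weak$^*$ convergence, so $\duality{f_n,m}\to\duality{f,m}$. It therefore suffices to treat $f$ vanishing on a neighbourhood $U$ of $K$. I would also reduce to $M$ separable: $m$ lies in $\lipfree{S}$ for some closed separable $S\subset M$, and I would replace each $N_i$ by $N_i\cap S$. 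This step needs some care, since $m\in\lipfree{N_i}\cap\lipfree{S}$ and this is itself a two-set instance of the theorem. I would handle it either by a direct argument for two sets or by keeping $M$ general and using the Lindelöf-type argument below only on the countable support data of $m$.

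The second step is local. Each $x\in M\setminus U$ lies outside some $N_{i(x)}$, so there is a ball $B(x,r_x)$ disjoint from $N_{i(x)}$. I would use weighting operators: for a bounded Lipschitz $h$ with bounded support, the map $g\mapsto gh$ is weak$^*$-continuous on $\Lip_0(M)$. Its preadjoint $T_h$ maps $\lipfree{N}$ into $\lipfree{N}$, because $T_h\delta(y)=h(y)\delta(y)-h(0)\delta(0)$-type corrections stay in $\lipfree{N}$. Then $\duality{fh,m}=0$ whenever $fh$ vanishes on $N_{i(x)}$, in particular whenever $h$ is supported in $B(x,r_x)$.

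The third step glues these local pieces. By separability (Lindelöf), countably many balls $B(x_k,r_{x_k}/2)$ cover $M\setminus U$. I would build Lipschitz functions $h_k$ supported in $B(x_k,r_{x_k})$ such that $h_0+\sum_k h_k=1$, with $h_0$ supported in $U$. Then $\duality{f,m}=\sum_k\duality{fh_k,m}=0$, using that $fh_0=0$ since $f$ vanishes on $U$. The main obstacle is exactly this summation. The partition functions have Lipschitz constants $\sim 1/r_k$ that are not uniformly bounded, so the partial sums $f\sum_{k\le n}h_k$ need not be bounded in $\Lip_0(M)$, and weak$^*$ convergence cannot be invoked directly. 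What I would try first is to avoid the global partition altogether. I would approximate $m$ in norm by finitely supported elements, and exploit that for finite-diameter $M$ the quantity $\duality{fh,m}$ is controlled by $\norm{m-m'}\Lip(fh)$ plus a tail term. Then I would pick only finitely many balls that matter up to $\ep$, adjusting the radii so that a uniform bound $\sup_n\Lip(f\sum_{k\le n}h_k)<\infty$ holds on the relevant region. The unbounded-diameter case would then follow by a second truncation with weights $h_R$ equal to $1$ on $B(0,R)$ and decaying at scale $R$, whose Lipschitz constants tend to zero.
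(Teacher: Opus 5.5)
Your preliminary steps are sound:

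\begin{itemize}
\item the description of $\lipfree{K}$ as the pre-annihilator of $\set{f : f\restrict_K=0}$;
\item the truncations $f_n$, which are bounded and converge pointwise, hence weak$^*$;
\item the fact that $\duality{fh,m}=0$ whenever $fh$ vanishes on some $N_i$;
\item the separability issue, which is handled by working only on a closed separable $S$ with $m\in\lipfree{S}$ (no intersection is needed for that);
\item the final passage from bounded to unbounded $M$ with the cut-offs $h_R$.
\end{itemize}

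For compact $M$ your argument is in fact complete. The compact set $M\setminus U$ is covered by finitely many of the open sets $M\setminus N_i$, and a finite Lipschitz partition of unity subordinate to them gives $\duality{f,m}=0$ at once. But that is the trivial case. The whole content of the theorem is the gluing step for non-compact $M$, and your Step 3 identifies it without carrying it out. So there is a genuine gap exactly there.

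The remedy you sketch does not close the gap, for three reasons.

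\begin{itemize}
\item \textbf{The constant is not controlled.} If $m'$ is finitely supported with $\norm{m-m'}<\ep$, the best you get is $\abs{\duality{f,m}}\leq\ep\,\Lip(\phi)$. Here $\phi$ is a finite sum of pieces $fh_k$ that agrees with $f$ on the support of $m'$.
\item \textbf{The radii are forced small by the geometry.} The pieces live in balls $B(x_k,r_k)$ with $r_k\leq d(x_k,N_{i_k})$. In a non-compact space these radii can be arbitrarily small at points far from $K$ where $\abs{f}$ is not small. For instance, take $N_1=\set{0}\cup\set{e_{2n}}$ and $N_2=\set{0}\cup\set{e_{2n}+\frac1n e_{2n+1}}$ in $\ell_2$: then $K=\set{0}$ but $d(e_{2n},N_2)=\frac1n$. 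The partial sums then drop from $f$ to $0$ over distances $\sim r_k$, so their Lipschitz constants are of order $\abs{f(x_k)}/r_k$. ``Adjusting the radii'' cannot help: they cannot be enlarged, and shrinking them only makes things worse.
\item \textbf{The estimate is circular in $\ep$.} The support of $m'$ changes with $\ep$, and with it which balls ``matter'' and how small they are. So the estimate has the form $\abs{\duality{f,m}}\leq\ep\,C(\ep)$ with no control on the growth of $C(\ep)$. The local vanishing of $m$ near each point is not inherited by its finitely supported approximants, and every attempt to transfer it costs a factor that depends on the approximant.
\end{itemize}

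What is missing is a mechanism that passes from local vanishing to global vanishing without any compactness. The paper does not supply this either. It omits the proof as ``a big detour'' and refers to \cite{AP_supports} for the finite-diameter case and to \cite{APPP} for the extension. Your reduction from unbounded to bounded $M$ is fine, but only once the bounded case has been established.
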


The most useful consequence of Theorem \ref{th:intersection} is that it allows us to define the \emph{support} of an element $m$ of $\lipfree{M}$ as the smallest closed set $N\subset M$ such that $m\in\lipfree{N\cup\set{0}}$; the theorem guarantees the existence of that set. See \cite{AP_supports,APPP} for more details.

The statement of Theorem \ref{th:intersection} assumes that $M$ and all the $N_i$ are complete metric spaces. The underlying fact (``intersections of Lipschitz-free spaces are Lipschitz-free spaces'') is still valid in the non-complete case, but the expression of the intersection becomes cumbersome. Note that, by its very definition, the Lipschitz space $\Lip_0(M)$ over any metric space agrees with the Lipschitz space $\Lip_0(\cl{M})$ over the completion $\cl{M}$ of $M$; the same holds for Lipschitz-free spaces, since they are defined entirely in terms of $\Lip_0(M)$. It is customary in Lipschitz-free space theory to focus on complete metric spaces, as it entails no loss of generality and allows for more elegant formulation of many results like Theorem \ref{th:intersection}.

\subsection{Examples in the real line}

We finish this section by describing the most important specific examples of Lipschitz-free spaces. When the metric space $M$ is finite, $\Lip_0(M)$ is obviously a finite-dimensional Banach space, so the same holds for $\lipfree{M}$. We will not be interested in these cases, focusing instead on infinite metric spaces $M$. Let us see how Lipschitz and Lipschitz-free spaces look like in the cases described in Example \ref{ex:examples_lip}.

\begin{example}
\label{ex:lipfree_N}
Let $M=\NN\cup\set{0}$ with the usual metric inherited from $\RR$. Then $\lipfree{M}$ is isometric to $\ell_1$ and $\Lip_0(M)$ is isometric to $\ell_\infty$. Indeed, let us see that the linear mapping $T:\Lip_0(M)\rightarrow\ell_\infty$ defined by $(Tf)_n=f(n)-f(n-1)$, $n\in\NN$ is a surjective isometry. Choose $f\in\Lip_0(M)$. It is clear that $\abs{f(n)-f(n-1)}\leq\Lip(f)$ for any $n\in\NN$, hence $\norm{Tf}_\infty\leq\Lip(f)$. On the other hand, for any $\varepsilon>0$ there are $m<n$ in $\NN\cup\set{0}$ such that
$$
\Lip(f)-\varepsilon<\frac{\abs{f(n)-f(m)}}{n-m}\leq\max_{m<k\leq n}\abs{f(k)-f(k-1)}
$$
and therefore $\norm{Tf}_\infty>\Lip(f)-\varepsilon$. Thus $T$ is an isometry. It is also easy to check that, for any $(a_n)\in\ell_\infty$, the function $f$ given by $f(0)=0$ and $f(n) = \sum_{k=1}^n a_k$ for $n\in\NN$ satisfies $\Lip(f)\leq\norm{(a_n)}_\infty$ and $Tf=(a_n)$. It follows that $T$ is surjective. Moreover, $T$ is clearly pointwise-to-pointwise continuous, hence (by Corollary \ref{cr:lipfree_predual} and the Banach-Dieudonn\'e theorem) an adjoint operator. The inverse of the preadjoint of $T$ provides an explicit isometry $S:\lipfree{M}\rightarrow\ell_1$ that maps $\delta(n)$ to the sequence $(1,\stackrel{n}{\ldots},1,0,0,\ldots)$.
\end{example}

\begin{example}
\label{ex:lipfree_R}
Let $M=[0,1]$ with the usual metric. Let us see that $\Lip_0(M)$ is isometric to $L^\infty([0,1])$. Consider the linear operator $T_1:L^\infty([0,1])\rightarrow\Lip_0(M)$ given by
$$
(T_1f)(x)=\int_0^x f(t)\,dt .
$$
It is clear that $\Lip(T_1f)\leq\norm{f}_\infty$, so $T_1$ is non-expansive. On the other hand, consider the operator $T_2:\Lip_0([0,1])\rightarrow L^\infty([0,1])$ given by $T_2f=f'$. By Rademacher's theorem, $T_2$ is well-defined, also non-expansive, and $T_1$ and $T_2$ are inverses of each other by the fundamental theorem of calculus. It follows that both $T_1$ and $T_2$ are surjective linear isometries. Moreover, if $(f_i)$ is a net in $L^\infty([0,1])$ with weak$^*$ limit $f$, then
$$
\lim_i (T_1f_i)(x) = \lim_i \int_0^x f_i(t)\,dt = \int_0^x f(t)\,dt = (T_1f)(x)
$$
for any $x\in [0,1]$. Thus $T_1$ is weak$^*$-to-pointwise, hence (by Corollary \ref{cr:lipfree_predual}) weak$^*$-to-weak$^*$ continuous. Its preadjoint is a linear isometry $\lipfree{M}\to L^1([0,1])$ that takes $\delta(x)$ to $\chi_{[0,x]}$.
\end{example}

Godard proved in \cite{Godard} that these two examples essentially cover all possible subsets $M$ of $\RR$ (and, in fact, one can find explicit linear isometries for any $M$ that do not look very different from the ones given in Examples \ref{ex:lipfree_N} and \ref{ex:lipfree_R}).

\begin{theorem}[{cf.~\cite[Corollary 3.4]{Godard}}]
\label{th:godard}
Let $M\subset\RR$ be closed and infinite. Then:
\begin{enumerate}[label={\upshape{(\alph*)}}]
\item If $\HH^1(M)=0$ then $\lipfree{M}$ is isometric to $\ell_1$.
\item If $\HH^1(M)>0$ then $\lipfree{M}$ is isomorphic to $L^1([0,1])$.
\end{enumerate}
\end{theorem}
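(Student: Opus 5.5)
For part (b), I will show that $\lipfree{M}$ is complemented in $\lipfree{\RR}\cong L^1(\RR)$, that it contains a complemented copy of $L^1$, and then apply a Pe\l czy\'nski decomposition argument. For part (a), I will write down an explicit isometry onto $\ell_1$ modelled on Example \ref{ex:lipfree_N}. In both cases the complement $\RR\setminus M$ is a countable union of disjoint bounded open intervals $(a_k,b_k)$, plus possibly one or two unbounded rays. I fix the base point $0\in M$.

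\textbf{Part (a).} Every $f\in\Lip_0(M)$ is determined by its values on $M$. Since $\HH^1(M)=0$, the gap lengths fill every interval $[x,y]$ with $x<y$ in $M$ up to a null set, so $y-x=\sum_{(a_k,b_k)\subset[x,y]}(b_k-a_k)$. I define $T:\Lip_0(M)\to\ell_\infty$ by
$$
(Tf)_k=\frac{f(b_k)-f(a_k)}{b_k-a_k}.
$$
Then $\norm{Tf}_\infty\leq\Lip(f)$.

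Conversely, given $(c_k)\in\ell_\infty$, I set $f(y)-f(x)=\sum_{(a_k,b_k)\subset[x,y]}c_k(b_k-a_k)$. The series converges absolutely because the gap lengths sum to $y-x$, and it gives $\abs{f(y)-f(x)}\leq\norm{c}_\infty (y-x)$. It remains to check $f=g$ for the $f$ built from $c=Tg$. This is the key step: $g(y)-g(x)$ must equal the sum of its increments over the gaps. I argue that $g$, extended affinely across each gap, is a Lipschitz function on the closed convex hull of $M$. Its derivative is a.e. the slope on the gaps, because $M$ is null. The fundamental theorem of calculus then gives the identity.

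So $T$ is an onto isometry, and it is pointwise-to-pointwise continuous on bounded sets. By Corollary \ref{cr:lipfree_predual} it is weak$^*$-continuous, hence an adjoint, and its preadjoint gives $\lipfree{M}\cong\ell_1$ (infinitely many gaps exist since $M$ is infinite and null). Unbounded rays contribute nothing, since $M$ has no points beyond them.

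\textbf{Part (b).} I extend each $f\in\Lip_0(M)$ affinely on the bounded gaps and constantly on the rays. This defines $E:\Lip_0(M)\to\Lip_0(\RR)$ with $\norm{E}=1$. $E$ is pointwise continuous on bounded sets, hence an adjoint of an operator $P:\lipfree{\RR}\to\lipfree{M}$. Composed with the inclusion of Proposition \ref{pr:lipfree_subspace}, $P$ gives a norm-one projection. So $\lipfree{M}$ is $1$-complemented in $\lipfree{\RR}\cong L^1(\RR)\cong L^1$, where the identification comes from the analogue of Example \ref{ex:lipfree_R}.

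Next, $\HH^1(M)>0$ means $M$ contains a compact $K$ of positive measure. The same extension argument shows $\lipfree{M}$ is isometric to the subspace of $L^1(\RR)$ obtained as the image of the projection, and I would identify it concretely as $L^1(M)\oplus_1\ell_1$. The summand $L^1(M)$ comes from derivatives on $M$, and the $\ell_1$ summand comes from averaged derivatives on the gaps, as in part (a). Since $L^1(M)\cong L^1$ (a nonatomic separable measure space of positive measure) and $L^1\oplus\ell_1\cong L^1$, which is standard because $\ell_1$ is complemented in $L^1$ and Pe\l czy\'nski decomposition applies, this gives (b).

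I expect the main obstacle to be the precise identification of $\lipfree{M}$ with $L^1(M)\oplus\ell_1$. This requires splitting $f'$ into its part on $M$ and its gap averages, and checking that the isometry is weak$^*$-continuous. The ingredients are Rademacher's theorem and the fundamental theorem of calculus applied to the affine extension, exactly as in part (a).
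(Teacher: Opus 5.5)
Your proposal is correct and follows essentially the route the paper points to (Godard's explicit isometries, in the spirit of Examples \ref{ex:lipfree_N} and \ref{ex:lipfree_R}). In (a) this is the gap-slope isometry onto $\ell_\infty$, and in (b) it is the identification $\lipfree{M}\cong L^1(M)\oplus_1\ell_1$ (atoms weighted by gap lengths) followed by $L^1\oplus\ell_1\cong L^1$. Once that identification is established, the $1$-complementation in $\lipfree{\RR}$ and the compact set $K$ are not needed, and when $M$ has only finitely many bounded gaps the gap summand is $\ell_1^n$ (possibly trivial), which does not affect the conclusion.
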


We finish this section by remarking that, while the situation is completely understood for subsets of the real line, our knowledge is hopelessly limited in higher dimensions. A celebrated result of Naor and Schechtman \cite{NaorSchechtman} states that $\lipfree{\RR^2}$ cannot be isomorphically embedded into $L^1([0,1])$, so Proposition \ref{pr:lipfree_subspace} implies that $\lipfree{\RR^n}$ cannot be isomorphic to (a subspace of) $\lipfree{\RR}$ for $n>1$. Whether $\lipfree{\RR^m}$ and $\lipfree{\RR^n}$ are isomorphic for any $n>m\geq 2$ is an open problem at the time of writing, perhaps the most important one in Lipschitz-free space theory.

\section{Equivalence - the compact case}
\label{sec:compact}

We now proceed to describe several functional analytic properties of Lipschitz-free spaces that are equivalent to pure $1$-unrectifiability of the underlying metric space, under the assumption that the metric space is compact. In the next section, we will see which of these equivalences carry over to the general case. We start by stating the equivalence theorem before defining the new notation appearing in it.

\begin{theorem}[{\cite[Theorem 3.1]{AGPP}}]
\label{th:compact}
Let $M$ be a compact metric space. Then the following are equivalent:
\begin{enumerate}[label={\upshape{(\roman*)}}]
\item $M$ is purely $1$-unrectifiable,
\item $\lip(M)$ separates points of $M$ uniformly,
\item $\lip_0(M)^*=\lipfree{M}$,
\item $\lipfree{M}$ is a dual Banach space,
\item $\lipfree{M}$ has the Radon-Nikod\'ym property,
\item $\lipfree{M}$ has the Schur property,
\item $\lipfree{M}$ does not have a subspace isomorphic to $L^1([0,1])$.
\end{enumerate}
\end{theorem}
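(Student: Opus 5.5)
The proof is organised as a cycle of implications. Two of them are already available: (i)$\Rightarrow$(ii) is Theorem \ref{th:agpp spu}, and (ii)$\Rightarrow$(i) is Proposition \ref{pr:spu p1u}. Throughout, $\lip_0(M)=\lip(M)\cap\Lip_0(M)$ is a closed subspace of $\Lip_0(M)$; closedness follows from the last bulleted property of locally flat functions. The remaining plan is
$$(\mathrm{ii})\Rightarrow(\mathrm{iii})\Rightarrow(\mathrm{iv})\Rightarrow(\mathrm{v})\Rightarrow(\mathrm{vii})\Rightarrow(\mathrm{i}),$$
together with the branch (iii)$\Rightarrow$(vi)$\Rightarrow$(vii).

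For (ii)$\Rightarrow$(iii), consider the restriction map $R:\lipfree{M}\to\lip_0(M)^*$.
\begin{itemize}
\item By (ii) and Proposition \ref{pr:spu eqv density}, $B_{\lip_0(M)}$ is pointwise, hence weak$^*$ (Corollary \ref{cr:lipfree_predual}), dense in $B_{\Lip_0(M)}$. Therefore $B_{\lip_0(M)}$ is norming for $\lipfree{M}$, and $R$ is an isometry.
\item For surjectivity, use compactness: every locally flat function on compact $M$ is uniformly locally flat. Then $\lip_0(M)$ embeds isometrically into a $C_0$-type space through $f\mapsto \big((f(x)-f(y))/d(x,y)\big)_{x\neq y}$, viewed as a function on $\{(x,y)\in M\times M: x\neq y\}$ vanishing near the diagonal.
\item By Hahn--Banach and Riesz, every functional on $\lip_0(M)$ is then given by a finite measure on the compactum $\{(x,y)\in M\times M: x\neq y\}$. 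Each point mass at $(x,y)$ corresponds to $(\delta(x)-\delta(y))/d(x,y)\in\lipfree{M}$, and a Bochner-integral argument shows the measure corresponds to an element of $\lipfree{M}$.
\end{itemize}
Thus $\lipfree{M}=\lip_0(M)^*$.

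The implication (iii)$\Rightarrow$(iv) is trivial. For (iv)$\Rightarrow$(v), a separable dual space has the Radon--Nikod\'ym property, and $\lipfree{M}$ is separable because $M$ is compact. For (v)$\Rightarrow$(vii), the Radon--Nikod\'ym property passes to subspaces and $L^1([0,1])$ fails it.

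For (iii)$\Rightarrow$(vi), I would show that $\lip_0(M)$ is isomorphic to a subspace of $c_0$, or at least is an $M$-embedded space with no copy of $\ell_1$ in its dual behaving badly. The cleaner route is to use the compactness of $\{(x,y)\in M\times M:d(x,y)\ge\delta\}$ to approximate $\lip_0(M)$ by finite-dimensional pieces with almost-isometric $\ell_\infty^n$ structure. This gives that $\lip_0(M)$ is almost isometric to a subspace of $c_0$, in the sense of Kalton's theorem, so its dual $\lipfree{M}$ has the Schur property. For (vi)$\Rightarrow$(vii), $L^1$ fails the Schur property (Rademacher functions) and Schur passes to subspaces.

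Finally, (vii)$\Rightarrow$(i) goes by contraposition. If $M$ is not purely $1$-unrectifiable, Theorem \ref{th:p1u kirchheim} gives a compact $K\subset\RR$ with $\HH^1(K)>0$ that is bi-Lipschitz embedded in $M$. Then $\lipfree{K}\cong L^1$ by Theorem \ref{th:godard}. The linearization (Corollary \ref{cr:linearization}) and Proposition \ref{pr:lipfree_subspace} embed this isomorphically into $\lipfree{M}$.

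The main obstacle is the surjectivity in (ii)$\Rightarrow$(iii) and the Schur step (iii)$\Rightarrow$(vi). Both need a careful use of uniform local flatness on compact $M$; I would try first to prove them through the almost-$c_0$ embedding of $\lip_0(M)$.
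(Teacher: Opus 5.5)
Your cycle of implications is sound and the proposal is correct in substance, but it departs from the paper in two places.

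\textbf{(ii)$\Rightarrow$(iii).} The paper applies the Petunin--Plichko criterion (Theorem \ref{th:petunin plichko}). It checks weak$^*$ density of $B_{\lip_0(M)}$, and norm attainment of each $f\in\lip_0(M)$ at some $(\delta(x)-\delta(y))/d(x,y)$, which comes from compactness and local flatness. You instead use the de Leeuw map into $C_0(\widetilde{M})$, where $\widetilde{M}=\set{(x,y)\in M\times M : x\neq y}$, together with Riesz representation and a Bochner integral. This works once it is stated correctly:
$\widetilde{M}$ is locally compact, not a compactum; the slopes $\frac{f(x)-f(y)}{d(x,y)}$ tend to $0$ at the diagonal rather than vanishing near it; Riesz for $C_0(\widetilde{M})$ gives a finite regular Borel measure $\mu$; and $m=\int_{\widetilde{M}}\frac{\delta(x)-\delta(y)}{d(x,y)}\,d\mu(x,y)$ is the Bochner integral of a bounded continuous map into the separable space $\lipfree{M}$.
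Your route avoids Petunin--Plichko and proves more: the restriction map $\lipfree{M}\to\lip_0(M)^*$ is a quotient map for \emph{every} compact $M$. So (ii) is needed only for injectivity, and the surjectivity you list as an obstacle is already settled by your own argument.

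\textbf{The Schur property.} The paper proves (ii)$\Rightarrow$(vi) by citing Petitjean's theorem (Theorem \ref{th:spu schur}), which rests on Kalton's lemma. That argument uses no duality and applies to non-compact $M$ whenever uniformly locally flat functions separate points uniformly. Your route (iii)$\Rightarrow$(vi) goes through an almost isometric embedding of $\lip_0(M)$ into $c_0$. It is confined to compact $M$ and needs (iii), but it yields stronger structural information: $\lipfree{M}$ is isomorphic to the dual of a subspace of $c_0$.

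As written, though, this Schur step is only a plan, and two ingredients must be made explicit.

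First, the embedding. The phrase ``finite-dimensional pieces with almost isometric $\ell_\infty^n$ structure'' describes a local property that $C[0,1]$ also has, and $C[0,1]^*$ is not Schur. What matters is how the coordinates are chosen. Take $\delta_0\geq\mathrm{diam}(M)$, $\delta_k\downarrow 0$, and finite $\eta_k$-nets $D_k\subset A_k$ of the compact annuli $A_k=\set{(x,y):\delta_{k+1}\leq d(x,y)\leq\delta_k}$, with $\eta_k\leq\ep\delta_{k+1}/4$. Put $Tf=\pare{\frac{f(p)-f(q)}{d(p,q)}}_{(p,q)\in\bigcup_k D_k}$.
For $(x,y)\in A_k$ and an $\eta_k$-close $(p,q)\in D_k$, the two slopes differ by at most $4\eta_k\norm{f}/\delta_{k+1}\leq\ep\norm{f}$. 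Hence $(1-\ep)\norm{f}\leq\norm{Tf}_\infty\leq\norm{f}$. Moreover $Tf\in c_0$, because each $D_k$ is finite and $f$ is uniformly locally flat. Nets of the whole sets $\set{d\geq\delta_k}$ would fail here, since they resample far-apart pairs infinitely often. Note that this step uses only compactness, not (ii).

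Second, the passage to the dual. $\lip_0(M)^*$ is then isomorphic to $Y^*=\ell_1/Y^\perp$ for some $Y\subset c_0$. Being a quotient of $\ell_1$ proves nothing by itself, since every separable space is one. You need the classical fact that subspaces of $c_0$ have the Dunford--Pettis property and contain no copy of $\ell_1$; via Rosenthal's $\ell_1$ theorem, this makes $Y^*$ Schur.

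Drop the $M$-embedded alternative. $M$-embeddedness alone does not give a Schur dual: reflexive spaces such as $\ell_2$ are trivially $M$-embedded.
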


We will now go over each implication one by one, providing any missing definitions as needed. We will not provide full proofs of every implication; instead, we will only sketch the ideas behind some of them.

\medskip
\textbf{Implication (i)$\Rightarrow$(ii):}
This is given by Theorem \ref{th:agpp spu}. Although we have covered it here first, it was the last implication to be proved historically, and the one that completed the theorem in \cite{AGPP}. Recall that, by Proposition \ref{pr:spu p1u}, we also have (ii)$\Rightarrow$(i).

\medskip
\textbf{Implications (ii)$\Rightarrow$(iii)$\Rightarrow$(iv):}
Similarly to how Lipschitz spaces were defined in Section \ref{sec:lipfree}, one can define the restricted space of locally flat Lipschitz functions
$$
\lip_0(M) = \set{f\in\Lip_0(M) \,:\, \text{$f$ is locally flat}} .
$$
This is often called the \emph{little Lipschitz space} over $M$. One of the properties stated after Definition \ref{def:locally flat} above is that norm limits of locally flat functions are locally flat; it follows that $\lip_0(M)$ is a closed subspace of $\Lip_0(M)$, hence a Banach space with the Lipschitz norm as well.

\begin{remark}
Note that, unlike Lipschitz spaces, little Lipschitz spaces are \textit{not} invariant under taking the completion of the metric space. For instance, $\Lip_0(\QQ)$ can be identified with $\Lip_0(\RR)$, as any Lipschitz function on $\QQ$ admits a unique continuous extension to $\RR$ with the same Lipschitz constant. However, $\lip_0(\QQ)$ is very different from $\lip_0(\RR)$. Indeed, $\lip_0(\RR)$ denotes the set of functions in $\Lip_0(\RR)$ that are locally flat at every point of $\RR$, which is just $\set{0}$ by Example \ref{ex:examples_lip}\ref{ex:lipR}. On the other hand, functions in $\lip_0(\QQ)$ are only required to be locally flat at every point of $\QQ$; in fact, $\lip_0(\QQ)$ can be identified with the space of functions in $\Lip_0(\RR)$ that are locally flat at every point of $\QQ$. Since $\QQ$ is countable, it is possible for such a function to be non-trivial - in fact, $\lip_0(\QQ)$ is quite large by Example \ref{ex:examples_lip}\ref{ex:real_null}.
\end{remark}

It should be clear that $\lip(M)$ separates points of $M$ uniformly if and only if $\lip_0(M)$ does (that is, one may replace $\lip(M)$ with $\lip_0(M)$ in Definition \ref{def:spu}), as that property is invariant under adding a constant to functions. Similarly, in Proposition \ref{pr:spu eqv density} one may replace $\Lip(M)$ and $\lip(M)$ with $\Lip_0(M)$ and $\lip_0(M)$, respectively. Taking Corollary \ref{cr:lipfree_predual} into account, this yields the following equivalence:

\begin{proposition}
\label{pr:spu eqv goldstine}
Let $M$ be a metric space. Then $\lip(M)$ separates points of $M$ uniformly if and only if $B_{\lip_0(M)}$ is weak$^*$ dense in $B_{\Lip_0(M)}$.
\end{proposition}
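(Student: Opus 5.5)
The plan is to reduce the statement to Proposition \ref{pr:spu eqv density}, using the version of it for $\Lip_0(M)$ and $\lip_0(M)$ mentioned just before the statement, and then to translate pointwise density into weak$^*$ density through Corollary \ref{cr:lipfree_predual}.

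\textbf{Step 1: pass to the pointed spaces.} I would first check that the following are equivalent:
\begin{enumerate}[label={\upshape{(\roman*)}}]
\item $\lip(M)$ separates points uniformly;
\item $\lip_0(M)$ separates points uniformly;
\item $B_{\lip_0(M)}$ is pointwise dense in $B_{\Lip_0(M)}$.
\end{enumerate}
For (i)$\Leftrightarrow$(ii), note that if $f\in\lip(M)$ is a separating function, then $f-f(0)\in\lip_0(M)$. This new function has the same Lipschitz constant and the same differences $f(y)-f(x)$.

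For (ii)$\Leftrightarrow$(iii), I would rerun the proof of Proposition \ref{pr:spu eqv density}. In the direction (ii)$\Rightarrow$(iii), given $f\in B_{\Lip_0(M)}$ and finitely many points $x_1,\dots,x_n$, I add the base point $0$ to the list. The construction then produces $g\in\lip(M)$ with $\Lip(g)\leq 1$ and $\abs{g(x_i)-f(x_i)}\leq\ep$, and also $\abs{g(0)-f(0)}=\abs{g(0)}\leq\ep$. The replacement $g-g(0)$ lies in $B_{\lip_0(M)}$ and approximates $f$ within $2\ep$ at each $x_i$. In the direction (iii)$\Rightarrow$(ii), I approximate $d(x,\cdot)-d(x,0)\in B_{\Lip_0(M)}$ at the two points $x,y$, exactly as in the original argument.

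\textbf{Step 2: translate to the weak$^*$ topology.} By Corollary \ref{cr:lipfree_predual}, the weak$^*$ topology and the topology of pointwise convergence agree on $B_{\Lip_0(M)}$. Since $B_{\lip_0(M)}\subset B_{\Lip_0(M)}$, the closure of $B_{\lip_0(M)}$ relative to $B_{\Lip_0(M)}$ is the same in both topologies. Moreover, $B_{\Lip_0(M)}$ is weak$^*$ closed by Banach--Alaoglu, so this relative closure coincides with the weak$^*$ closure taken in all of $\Lip_0(M)$. Hence pointwise density of $B_{\lip_0(M)}$ in $B_{\Lip_0(M)}$ is equivalent to $B_{\lip_0(M)}$ being weak$^*$ dense in $B_{\Lip_0(M)}$, which completes the proof.

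\textbf{Main obstacle.} I do not expect a serious obstacle. The only point needing care is the base-point normalization in Step 1, namely ensuring that subtracting $g(0)$ keeps the approximation good at the finitely many chosen points. Including $0$ among those points handles this.
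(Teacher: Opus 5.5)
Your proposal is correct and follows the paper's route: pass to the pointed versions of Proposition \ref{pr:spu eqv density} with $\lip_0(M)$ and $\Lip_0(M)$ (a step the paper only calls ``clear''), then use Corollary \ref{cr:lipfree_predual} to identify the pointwise and weak$^*$ closures on $B_{\Lip_0(M)}$. One minor remark: the construction in Proposition \ref{pr:spu eqv density} gives $h(x_i)=f(x_i)$ exactly before rescaling, so once $0$ is on your list you already get $g(0)=f(0)/(1+\eta)=0$, and the correction $g-g(0)$ is unnecessary.
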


Note that Proposition \ref{pr:spu eqv goldstine} provides the implication (iii)$\Rightarrow$(ii). Indeed, by Goldstine's theorem $B_{\lip_0(M)}$ is weak$^*$ dense in $B_{\lip_0(M)^{**}}$, and assumption (iii) entails that $\lip_0(M)^{**}=\lipfree{M}^*=\Lip_0(M)$. Proving the converse implication requires a method for recognizing possible preduals of a Banach space $X$ inside of its dual $X^*$. There are several known preduality criteria; the following one, due to Petunin and Plichko \cite{PetuninPlichko}, sees frequent use in Lipschitz-free space theory.

\begin{theorem}[{\cite[Theorem 4]{PetuninPlichko}}]
\label{th:petunin plichko}
Let $X$ be a separable Banach space. Then a closed subspace $Y$ of $X^*$ is a predual of $X$ if and only if it is weak$^*$ dense in $X^*$ and every element of $Y$ attains its norm on $X$.
\end{theorem}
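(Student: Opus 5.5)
The natural map to study is $J:X\to Y^*$, $\duality{y,J(x)}=\duality{x,y}$ for $x\in X$, $y\in Y$. It has norm at most $1$, and "$Y$ is a predual of $X$" means that $J$ is a surjective linear isometry. I would prove the two directions separately. The hard part of the sufficiency is a James/Simons-type compactness argument, and that is where separability enters.

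\emph{Necessity.} Suppose $J$ is an onto isometry.
\begin{enumerate}[label={\upshape{(\alph*)}}]
\item Weak$^*$ density. If $Y$ were not weak$^*$ dense in $X^*$, the Hahn--Banach theorem in $(X^*,w^*)$ would give $0\neq x\in X$ with $\duality{x,y}=0$ for all $y\in Y$. Then $J(x)=0$, contradicting injectivity of $J$.
\item Norm attainment. Fix $y\in Y$. Since $J$ is an onto isometry, $J(B_X)=B_{Y^*}$, and $B_{Y^*}$ is $\sigma(Y^*,Y)$-compact by Banach--Alaoglu. So $B_X$ is $\sigma(X,Y)$-compact. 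The map $x\mapsto\duality{x,y}$ is $\sigma(X,Y)$-continuous, so it attains its supremum $\norm{y}$ on $B_X$.
\end{enumerate}

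\emph{Sufficiency, reduction to compactness.} Here $Y$ is weak$^*$ dense, hence separates the points of $X$, so $J$ is injective. I would reduce everything to the following claim:
\begin{quote}
$B_X$ is $\sigma(X,Y)$-compact, equivalently $J(B_X)$ is $\sigma(Y^*,Y)$-closed in $Y^*$.
\end{quote}
Granting the claim, the conclusion follows from the bipolar theorem in the dual pair $(Y,Y^*)$. The set $J(B_X)$ is absolutely convex and weak$^*$-closed, so it equals its bipolar. Its polar in $Y$ is
$$
\set{y\in Y:\sup_{x\in B_X}\duality{x,y}\leq 1}=B_Y ,
$$
because the norm of $y$ in $Y\subset X^*$ is exactly $\sup_{B_X}y$. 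Hence $J(B_X)=(B_Y)^\circ=B_{Y^*}$, which says at once that $J$ is isometric and surjective. Notice that this step gives the norming property ($\norm{x}=\sup_{y\in B_Y}\duality{x,y}$) for free; weak$^*$ density alone would not give it.

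\emph{Sufficiency, proving the claim (main obstacle).} Let $K$ be the $\sigma(Y^*,Y)$-closure of $J(B_X)$. It is weak$^*$-compact, convex and contained in $B_{Y^*}$. For each $y\in Y$,
$$
\max_K y=\sup_{J(B_X)}y=\norm{y},
$$
and by hypothesis this value is attained at a point of $J(B_X)$. So $J(B_X)$ is a James boundary of $K$ in the sense of Simons, and it remains to show that such a boundary fills $K$.
\begin{enumerate}[label={\upshape{(\arabic*)}}]
\item I would first apply Simons' inequality, or equivalently the sup--limsup theorem. For every bounded sequence $(y_n)$ in $Y$ it yields
$$
\sup_{x\in B_X}\limsup_n\duality{x,y_n}\geq\inf\set{\sup_K y : y\in\conv_{\sigma}\set{y_n}} .
$$
Separability of $X$ is what lets this sequential statement control the whole of $K$.
\item Next, separability should give a countable subset of $Y$ that is $\sigma(X,Y)$-determining on $B_X$.
\item Then I would argue by contradiction. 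Take $k\in K\setminus J(B_X)$ and separate $k$ from the convex set $J(B_X)$ by countably many elements of $Y$. Combining these into a sequence $(y_n)$ and applying the inequality from step (1) should produce an element of $Y$ that does not attain its norm on $B_X$, a contradiction.
\end{enumerate}
This gives $K=J(B_X)$. The delicate point is exactly this boundary-to-compactness step, since for nonseparable $X$ the statement can fail. If that route stalls, I would instead invoke the Godefroy-type theorem that a separable boundary of a weak$^*$-compact convex set generates it as a norm-closed convex hull, applied to the convex set $J(B_X)$.
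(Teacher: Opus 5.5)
The paper does not prove this theorem; it quotes it from Petunin--Plichko and uses it as a black box, so your proposal has to stand on its own. Your necessity argument is correct, and so is the bipolar reduction: since $J$ is injective, $J(B_X)=B_{Y^*}$ does make $J$ an onto isometry. There is, however, a genuine gap in the step you flag as the main obstacle.

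\textbf{Step (3) cannot be carried out as written.} By your own bipolar computation, $K=B_{Y^*}$, and every $k\in K$ lies in the $\sigma(Y^*,Y)$-closure of $J(B_X)$. Hence $\duality{y,k}\leq\sup_{B_X}y$ for all $y\in Y$: no element of $Y$ separates $k$ from $J(B_X)$, so there is nothing to combine into a sequence.

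The separation has to be done in the norm topology of $Y^*$, by some $\phi\in Y^{**}$ with $\phi(k)>\sup_{B_X}\phi\circ J$. This is only possible when $k\notin\overline{J(B_X)}^{\|\cdot\|}$.

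Separability does not enter through a countable ``determining'' subset of $Y$. Your step (2) is unjustified as stated, since a priori $Y$ need not be separable; that is only known once the theorem is proved. It enters as follows:
\begin{enumerate}
\item $W=\overline{\lspan}(J(X)\cup\set{k})$ is norm-separable, so $B_{W^*}$ is weak$^*$ metrizable.
\item Goldstine's theorem then gives a bounded sequence $(y_n)$ in $Y$ with $\duality{y_n,w}\to\phi(w)$ for every $w\in W$.
\item Apply Simons' inequality to the tails $(y_n)_{n\geq m}$. Their $\sigma$-convex combinations lie in $Y$, so they attain their norm. This gives $\sup_{B_X}\phi\circ J\geq\inf_{n\geq m}\duality{y_n,k}\to\phi(k)$, a contradiction.
\end{enumerate}
This is precisely the proof of the Godefroy theorem you list as a fallback.

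\textbf{Either route only reaches the norm closure.} Both give $B_{Y^*}=\overline{J(B_X)}^{\|\cdot\|}$, not $J(B_X)=B_{Y^*}$. A point of $\overline{J(B_X)}^{\|\cdot\|}\setminus J(B_X)$ cannot be separated from $J(B_X)$ by any functional. Also, $J(B_X)$ is not known to be norm-closed, because $J$ is not known to be bounded below. So your claim that $B_X$ is $\sigma(X,Y)$-compact is never actually reached.

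The missing ingredient is the completeness of $X$, used through the successive-approximation argument from the open mapping theorem:
\begin{enumerate}
\item Note that $\overline{J(rB_X)}^{\|\cdot\|}=rB_{Y^*}$ for all $r>0$.
\item Given $z\in B_{Y^*}$ and $\ep>0$, choose $x_1\in B_X$ with $\norm{z-Jx_1}<\ep/2$.
\item For $j\geq 2$, inductively choose $x_j\in 2^{1-j}\ep B_X$ with $\norm{z-J(x_1+\dots+x_j)}<2^{-j}\ep$.
\item Then $x=\sum_j x_j$ converges in $X$, with $\norm{x}\leq 1+\ep$ and $Jx=z$.
\end{enumerate}
Thus $J((1+\ep)B_X)\supseteq B_{Y^*}$ for every $\ep>0$, so $J$ is onto. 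By injectivity, $\norm{J^{-1}z}\leq\norm{z}$, so $J$ is an isometry. The $\sigma(X,Y)$-compactness of $B_X$ then follows a posteriori rather than being the route to the result.
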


The last condition means the following: a functional $x^*\in X^*$ attains its norm if there exists $x\in X$, $x\neq 0$ such that $\duality{x^*,x}=\norm{x^*}\norm{x}$; that is, if the supremum defining its norm $\norm{x^*}$ is a maximum.

One drawback of this criterion is that it requires separability. Thankfully, this is not an issue in our case: any compact metric space $M$ is separable and thus $\lipfree{M}$, being linearly generated by the separable set $\delta(M)$, is separable as well. 

\begin{theorem}\label{th:spu dual}
Let $M$ be a compact metric space. If $\lip(M)$ separates points of $M$ uniformly, then $\lip_0(M)^*=\lipfree{M}$.
\end{theorem}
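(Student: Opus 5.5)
We will apply the Petunin--Plichko criterion (Theorem \ref{th:petunin plichko}) with $X=\lipfree{M}$, which is separable because $M$ is compact, and $Y=\lip_0(M)$, regarded as a closed subspace of $X^*=\Lip_0(M)$ via Corollary \ref{cr:lipfree_predual}. Once both hypotheses of the criterion are checked, the conclusion is that $Y$ is a predual of $X$, i.e. the canonical map $\lipfree{M}\to\lip_0(M)^*$ (restriction of functionals) is an onto isometry, which is the meaning of $\lip_0(M)^*=\lipfree{M}$.

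The first hypothesis is weak$^*$ density, which is immediate. By Proposition \ref{pr:spu eqv goldstine}, the assumption that $\lip(M)$ separates points uniformly gives that $B_{\lip_0(M)}$ is weak$^*$ dense in $B_{\Lip_0(M)}$. Taking multiples, $\lip_0(M)$ is then weak$^*$ dense in $\Lip_0(M)$.

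The second hypothesis, norm attainment, is the main step. Fix $f\in\lip_0(M)$ with $f\neq0$; we must find a nonzero $m\in\lipfree{M}$ with $\duality{f,m}=\norm{f}\norm{m}$. We will show that $f$ attains its Lipschitz constant at a pair of points, that is, there are $x\neq y$ with $f(x)-f(y)=\Lip(f)\,d(x,y)$. Then $m=\delta(x)-\delta(y)$ works, since $\norm{m}=d(x,y)$.

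To find the pair, choose $x_n\neq y_n$ with $(f(x_n)-f(y_n))/d(x_n,y_n)\to\Lip(f)>0$. By compactness we may pass to subsequences with $x_n\to x$ and $y_n\to y$. If $x=y$, then for any $r>0$ both $x_n$ and $y_n$ eventually lie in $B(x,r)$. The slopes would then be bounded by $\Lip(f\restrict_{B(x,r)})$, which tends to $0$ as $r\to0$ by local flatness at $x$. This would give $\Lip(f)=0$, a contradiction. Hence $x\neq y$, so $d(x_n,y_n)\to d(x,y)>0$, and by continuity of $f$ the quotient passes to the limit, giving $f(x)-f(y)=\Lip(f)d(x,y)$.

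The only subtle point is this compactness argument excluding collapse of the pair. That is exactly where local flatness and compactness are used, while separability is the other place where compactness enters.
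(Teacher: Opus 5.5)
Your proposal is correct and follows the paper's proof essentially verbatim: Petunin--Plichko applied to $X=\lipfree{M}$ and $Y=\lip_0(M)$, weak$^*$ density from Proposition \ref{pr:spu eqv goldstine}, and norm attainment at $\delta(x)-\delta(y)$. The pair $x,y$ comes from a compactness argument, with local flatness ruling out $x=y$. The only cosmetic difference is that the paper normalizes the norming element to $(\delta(x)-\delta(y))/d(x,y)$.
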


\begin{proof}
We apply Theorem \ref{th:petunin plichko} to $X=\lipfree{M}$ and $Y=\lip_0(M)$. We have already mentioned that $\lip_0(M)$ is a closed subspace of $\Lip_0(M)$, and weak$^*$ density is given by Proposition \ref{pr:spu eqv goldstine}, so we only need to show that every function $f\in\lip_0(M)$ attains its norm on some element of $\lipfree{M}$. Let $f\in\lip_0(M)$ and assume $\Lip(f)>0$. By the definition of Lipschitz constant, there exist sequences of points $(x_n)$ and $(y_n)$ in $M$ such that $x_n\neq y_n$ and
$$
\Lip(f) - \frac{1}{n} < \frac{f(x_n)-f(y_n)}{d(x_n,y_n)} \leq \Lip(f)  .
$$
Since $M$ is compact, we may pass to subsequences such that $x_n\to x\in M$, $y_n\to y\in M$, and
$$
\lim_{n\to\infty} \frac{f(x_n)-f(y_n)}{d(x_n,y_n)} = \Lip(f) .
$$
Note that $x$ and $y$ must be different: otherwise, local flatness at $x=y$ would imply that the limit above is $0$, a contradiction. Then, by continuity
$$
\Lip(f) = \frac{f(x)-f(y)}{d(x,y)} = \duality{f,\frac{\delta(x)-\delta(y)}{d(x,y)}}
$$
so $f$ attains its norm at the element $(\delta(x)-\delta(y))/d(x,y)$ of $\lipfree{M}$, of norm $1$.
\end{proof}

The implication (ii)$\Rightarrow$(iii) is thus proved, and (iii)$\Rightarrow$(iv) is of course trivial.

The original proof of Theorem \ref{th:spu dual} was essentially due to Hanin in \cite{Hanin} and refined to its current form by Weaver in \cite{Weaver96_1}; it can also be found in \cite[Chapter 4]{Weaver2}. The proof provided here is different, much more similar to the arguments used in \cite{Kalton} and \cite{GPPR} for related results.

\medskip
\textbf{Implication (iv)$\Rightarrow$(v):}
This is given by the following classical result of Dunford and Pettis; see \cite[Theorem 5.5.6]{AlbiacKalton} for a simple proof.

\begin{theorem}
\label{th:separable dual rnp}
Every separable dual Banach space has the Radon-Nikod\'ym property.
\end{theorem}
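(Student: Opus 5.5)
Let $X=Y^*$ be separable, with predual $Y$. We use the definition of the Radon-Nikod\'ym property from Definition \ref{def:rnp}: every Lipschitz map $f:[0,1]\to X$ should be differentiable almost everywhere. The plan is to build a candidate derivative using weak$^*$ compactness, and then upgrade it to a genuine norm derivative using separability.

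\textbf{Step 1: reduce to separable $Y$.} We would like the predual $Y$ to be separable as well. Since $X=Y^*$ is separable, $Y$ is separable: if $(x_n^*)$ is dense in $X$, choose norming points $y_n\in B_Y$ with $\langle x_n^*,y_n\rangle>\norm{x_n^*}/2$. Their closed span is then all of $Y$, by Hahn--Banach. In particular $B_X$ with the weak$^*$ topology is a compact metrizable space.

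\textbf{Step 2: a weak$^*$ derivative.} Fix a Lipschitz $f:[0,1]\to X$ with constant $L$. For each $y$ in a countable dense set $D\subset Y$, the scalar function $t\mapsto\langle f(t),y\rangle$ is Lipschitz. It is therefore differentiable almost everywhere, and it is the integral of its derivative.

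Take a common conull set $A$ on which all these scalar functions are differentiable. For $t\in A$, the map $y\mapsto \frac{d}{dt}\langle f(t),y\rangle$ is linear on $\lspan D$ (restrict to rational combinations) and bounded by $L\norm{y}$. It therefore extends to an element $g(t)\in X$ with $\norm{g(t)}\leq L$.

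The map $g$ is weak$^*$ measurable. Because $X$ is separable, Pettis's measurability theorem makes $g$ strongly measurable: the weak$^*$ Borel and norm Borel $\sigma$-algebras on the separable dual coincide, since norm-closed balls are weak$^*$-closed. Hence $g$ is Bochner integrable. Moreover, $f(t)-f(0)=\int_0^t g$, because both sides agree when paired with every $y\in D$.

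\textbf{Step 3: strong differentiability.} Apply the Lebesgue differentiation theorem for Bochner integrable functions. Approximating $g$ in $L^1$ by countably-valued functions reduces this to the scalar case: for almost every $t$,
\[\frac1h\int_t^{t+h}\norm{g(s)-g(t)}\,ds\to0.\]
At every such $t$ we get $\norm{(f(t+h)-f(t))/h-g(t)}\to0$, so $f$ is differentiable almost everywhere.

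The main obstacle is the measurability step, which is where separability of $X$ is genuinely needed (separability of $Y$ alone would not suffice; compare $\ell_\infty=\ell_1^*$). Care is needed to justify that $g$ is norm-measurable. A cleaner route would be to show that the sets $\{t: \norm{g(t)-x_n^*}<r\}$ are measurable: $\norm{g(t)-x_n^*}$ is a countable supremum of measurable scalar functions $\langle g(t)-x_n^*,y\rangle$ over $y\in D\cap B_Y$.
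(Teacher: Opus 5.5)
Your proof is correct and complete. It is the classical Dunford--Pettis argument, written directly for the Lipschitz-differentiability form of the Radon--Nikod\'ym property in Definition \ref{def:rnp}(a): build the weak$^*$ derivative, show it is Bochner measurable, represent $f$ as its Bochner integral, and differentiate by Lebesgue's theorem.

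The paper gives no proof of this statement. It attributes the result to Dunford and Pettis and refers to Albiac--Kalton, so there is no in-text argument to compare with step by step.

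Your route shows exactly where separability of $X$, and not merely of the predual $Y$, is used. Once $Y$ is separable, the weak$^*$ derivative $g$ exists almost everywhere and is scalarly measurable against $Y$. It is Bochner measurable only because two further facts hold:
\begin{itemize}
\item the norm of $X$ is a countable supremum of pairings with $D\cap B_Y$;
\item $X$ has a countable base of balls.
\end{itemize}
This is precisely what breaks down for $\ell_\infty=\ell_1^*$.

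Two small points to tighten:
\begin{itemize}
\item Take $D$ to be a $\QQ$-linear subspace of $Y$, or pass to its rational span, which is still countable. Then the derivative functional is genuinely $\QQ$-linear and $L$-bounded on a dense set before you extend it by continuity to an $\RR$-linear functional.
\item Your slogan that the weak$^*$ and norm Borel $\sigma$-algebras coincide is true here, but it does not by itself connect scalar measurability of $g$ with Borel measurability. The justification in your last paragraph does this cleanly. Each closed ball $\set{x^*:\norm{x^*-x_n^*}\leq r}$ is the countable intersection of the sets $\set{x^*:\duality{x^*-x_n^*,y}\leq r}$ with $y\in D\cap B_Y$. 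Every norm-open set is a countable union of such balls. Hence $g$ is norm-Borel, and therefore strongly measurable. That paragraph should replace the slogan rather than appear as an afterthought.
\end{itemize}
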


\medskip
\textbf{Implication (ii)$\Rightarrow$(vi):}
The relation between duality and the Schur property in Lipschitz-free spaces was first studied by Kalton in \cite{Kalton}, where he proved that $\lipfree{M}$ has the Schur property whenever $M$ is a snowflake. Later, in \cite{Petitjean}, Petitjean analyzed Kalton's proof and found that, in fact, the only real requirement in Kalton's argument was that the uniformly locally flat $1$-Lipschitz functions on $M$ are weak$^*$ dense in $B_{\Lip_0(M)}$ (even more, one only really needs density of the set of functions with uniformly small asymptotic Lipschitz constant, like those considered by Bate in \cite{Bate}). 

\begin{theorem}[{\cite[Proposition 8]{Petitjean}}]
\label{th:spu schur}
Let $M$ be a metric space. If the uniformly locally flat Lipschitz functions on $M$ separate points of $M$ uniformly, then $\lipfree{M}$ has the Schur property.
\end{theorem}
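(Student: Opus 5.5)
We argue by contradiction, following Kalton's approach for snowflakes. Suppose $\lipfree{M}$ fails the Schur property. Then there is a weakly null sequence $(m_n)$ in $\lipfree{M}$ with $\norm{m_n}\geq c>0$ for all $n$. Since $\lspan\,\delta(M)$ is dense, we may perturb each $m_n$ slightly and assume it is finitely supported, that is, a finite combination of elements $\delta(x)-\delta(y)$. By Rosenthal's $\ell_1$ theorem together with the fact that weakly null sequences have no $\ell_1$-subsequence, we may pass to a subsequence. The goal is to build a single $f\in\Lip_0(M)$ with $\duality{f,m_n}\geq c/4$ for infinitely many $n$, which contradicts weak nullity.

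The key tool is a gluing lemma. Let $\mathcal{U}$ denote the uniformly locally flat $1$-Lipschitz functions. By hypothesis and the argument of Proposition \ref{pr:spu eqv density}, where max/min operations preserve uniform local flatness with a common modulus, $\mathcal{U}\cap B_{\Lip_0(M)}$ is weak$^*$ dense in $B_{\Lip_0(M)}$. Hence for each $n$ we can find $f_n\in\mathcal{U}$, $\Lip(f_n)\leq 1$, with $\duality{f_n,m_n}\geq c/2$. Each $f_n$ has a scale $\delta_n$ below which its local slopes are at most $\ep_n$.

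First we separate scales. Since $m_n\to 0$ weakly, for any fixed finite set $F$ and any fixed Lipschitz function $g$ we have $\duality{g,m_n}\to 0$. Using this, we split each $m_n$ into a ``large-distance'' part and a ``small-distance'' part, where the small-distance part consists of pairs at distance $<\delta_{k}$ for earlier indices $k$. The uniform flatness of the previously chosen $f_k$ then makes them nearly blind to the small-distance part of later $m_n$. Applying weak nullity to the functions $d(\cdot,x)$, $x$ in a finite net, makes them nearly blind to the large-distance part.

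Next we sum. We choose inductively a subsequence $n_1<n_2<\cdots$ so that $\abs{\duality{f_{n_j},m_{n_i}}}\leq 2^{-i-j}c$ for $i\neq j$. For $i>j$ this uses weak nullity. For $i<j$ it uses the scale separation together with a Kalton-type decomposition of $m_{n_j}$ into pieces living at scale below $\delta_{n_i}$. Finally we set $f=\sum_j \pm$ (suitably truncated and localized) $f_{n_j}$, using Kalton's lemma that a sum of functions with geometrically separated scales, each flat below its scale and bounded above it, has bounded Lipschitz constant. Then $\duality{f,m_{n_i}}\geq c/4$ for all $i$.

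The main obstacle is this last step, controlling $\Lip(\sum_j f_{n_j})$ and decomposing the $m_n$ across scales. Here we would rely on Kalton's decomposition $\lipfree{M}\hookrightarrow(\sum_k\lipfree{M_k})_{\ell_1}$ into annular scales. We would try to reduce directly to Kalton's Schur argument, with $\mathcal{U}$ replacing the snowflake structure, as Petitjean observed.
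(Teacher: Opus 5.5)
Your outline has the right shape: a gliding hump contradicting weak nullity, using that $\mathcal{U}\cap B_{\Lip_0(M)}$ is weak$^*$ dense and hence norming. But the step that carries all the content is missing, and the bookkeeping around it is inverted.

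\textbf{The inverted bookkeeping.} For $i<j$ the pairing $\duality{f_{n_j},m_{n_i}}$ involves a \emph{later} function and an \emph{earlier} element. Decomposing $m_{n_j}$ at scales below $\delta_{n_i}$ only controls $\duality{f_{n_i},m_{n_j}}$, which weak nullity already gives for free. Your whole ``scale separation'' paragraph has the same defect: it only makes earlier functions blind to later elements.

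\textbf{What actually has to be arranged.} Each new function must have tiny oscillation, in two senses. First, $\norm{f_{n_j}}_\infty\sum_x\abs{a^{(i)}_x}$ must be small for every earlier $m_{n_i}=\sum_x a^{(i)}_x\delta(x)$; this makes $f_{n_j}$ invisible to earlier elements. Second, $\norm{f_{n_j}}_\infty\leq\eta_j\delta_{n_{j-1}}$ with $\sum_j\eta_j<\infty$. This makes $\sum_j f_{n_j}$ Lipschitz by the elementary estimate you allude to: if $\delta_{n_{k+1}}<d(x,y)\leq\delta_{n_k}$, the terms $j\leq k$ have small slope, the term $j=k+1$ contributes $O(d(x,y))$, and the terms $j\geq k+2$ contribute at most $2\norm{f_{n_j}}_\infty\leq 2\eta_j d(x,y)$.

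So everything hinges on knowing that, for large $n$, $m_n$ is still almost normed by functions of small sup-norm. Precisely: for every $\rho,\ep>0$,
$$\norm{m_n}\leq\ep+\sup\set{\duality{h,m_n}:\Lip(h)\leq 2,\ \norm{h}_\infty\leq\rho}$$
for all large $n$. Once this holds, you approximate such an $h$ on the finite support of $m_n$ by a uniformly locally flat function, and truncate it at height $\pm\rho$, which preserves uniform local flatness. Your phrase ``suitably truncated and localized'' hides exactly this lemma.

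\textbf{Where Kalton's lemma enters.} This lemma is where Kalton's lemma (Lemma \ref{lm:kalton}, or Proposition \ref{pr:kalton premium}) is needed. The paper names it as the basis of the proof, and your proposal never uses it.
\begin{itemize}
\item Given $r>0$, it provides a finite $E\ni 0$ such that every $m_n$ lies within $\ep$ of $\lipfree{[E]_r}$.
\item The set $\set{f\restrict_E: f\in B_{\Lip_0(M)}}$ is compact in $\RR^E$. Pick $F_1,\ldots,F_k\in B_{\Lip_0(M)}$ whose restrictions to $E$ form a finite $\ep'$-net of it.
\item Every $f\in B_{\Lip_0(M)}$ then splits as $F_i+g$ with $\Lip(g)\leq 2$ and $\abs{g}\leq\ep'+2r$ on $[E]_r$.
\item Weak nullity against the finitely many $F_i$, together with truncating $g$ (which does not change its pairing with elements of $\lipfree{[E]_r}$), gives the lemma with $\rho=\ep'+2r$, up to a constant multiple of $\ep$.
\end{itemize}
For compact $M$ one could take $E$ to be a finite $r$-net directly; Kalton's lemma is what replaces this in general.

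\textbf{Why your substitutes fail.}
\begin{itemize}
\item A general $M$ has no finite nets. Even when it does, weak nullity against finitely many functions $d(\cdot,x)$ says nothing about the nonlinear combinations such as $\min_x(v_x+d(\cdot,x))$ needed to reproduce the large-scale behaviour of an arbitrary $f$.
\item A ``large-distance part'' of $m_n$ depends on the chosen molecular representation, so there is nothing whose norm weak nullity could control.
\item Kalton's decomposition $\lipfree{M}\hookrightarrow(\sum_k\lipfree{M_k})_{\ell_1}$ is indexed by dyadic distances to the base point. It only reduces matters to bounded $M$. It does not split elements according to the lengths $d(x,y)$ of their molecules, so it cannot provide the decomposition across scales you want to rely on.
\end{itemize}
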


Since local flatness and uniform local flatness are equivalent in the compact setting, this theorem provides the implication (ii)$\Rightarrow$(vi). While its proof is not too difficult, it is based on a somewhat lengthy construction, so we shall omit it here and refer the interested reader to either \cite{Kalton} or \cite{Petitjean} instead. Notably, the proof is based on a lemma by Kalton that will be of significance in Section \ref{sec:non-compact} (see Lemma \ref{lm:kalton} below).

\medskip
\textbf{Implications (v)$\Rightarrow$(vii) and (vi)$\Rightarrow$(vii):}
It follows immediately from the definition of the Schur and the Radon-Nikod\'ym properties that both are invariant under isomorphisms and under passing to subspaces. While $\ell_1$ is the archetype example of a Banach space satisfying both, the paradigmatic examples of spaces failing both properties are $c_0$ and $L^1([0,1])$; the latter provides us with the desired implications. Indeed, it is easy to check that the mapping $f:[0,1]\to L^1([0,1])$ given by $f(t)=\chi_{[0,t]}$ is an isometry that is nowhere differentiable, so $L^1([0,1])$ fails Definition \ref{def:rnp}(a). On the other hand, the functions $f_n(t)=\sin(n\pi t)$ define a sequence in $L^1([0,1])$ that converges weakly to $0$ by the classical Riemann-Lebesgue lemma \cite[5.14]{Rudin}, but not in norm as $\norm{f_n}_1=\frac{2}{\pi}$ for all $n$, so $L^1([0,1])$ fails Definition \ref{def:rnp}(b) as well.

We remark here that, contrary to what these examples would suggest, neither the Schur nor the Radon-Nikod\'ym property implies the other one in the general Banach space setting.

\medskip
\textbf{Implication (vii)$\Rightarrow$(i):}
The remaining implication is easy and follows from Godard's theorem and the basic linearization properties of Lipschitz-free spaces.

\begin{proposition}
Let $M$ be a metric space. Suppose that $M$ is not purely $1$-unrectifiable. Then $\lipfree{M}$ contains an isomorphic copy of $L^1([0,1])$.
\end{proposition}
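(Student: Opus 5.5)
Since $M$ is not purely $1$-unrectifiable, Theorem \ref{th:p1u kirchheim} gives a curve fragment in $M$: a compact set $K\subset\RR$ with $\HH^1(K)>0$ and a bi-Lipschitz embedding $\varphi:K\to M$. The plan is to transport Godard's theorem for $K$ to $M$ through $\varphi$, using the linearization of bi-Lipschitz maps and the identification of Lipschitz-free spaces over subsets with subspaces.

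First we arrange base points. Choose any point $p\in K$ and make it the base point of $K$. For $M$, the choice of base point is inconsequential: changing it yields an isometric Lipschitz-free space, via translation $f\mapsto f-f(0')$ on $\Lip_0$ and its preadjoint. So we may take $\varphi(p)$ as the base point of $M$.

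Next, set $N=\varphi(K)\subset M$, which contains the base point. By Corollary \ref{cr:linearization} and the remark after it, $\varphi:K\to N$ is a bi-Lipschitz bijection preserving base points, so its linearization $\widehat{\varphi}:\lipfree{K}\to\lipfree{N}$ is a linear isomorphism. Indeed, $\widehat{\varphi^{-1}}$ is its inverse, since the two compositions fix the linearly dense sets $\delta(K)$ and $\delta(N)$. By Proposition \ref{pr:lipfree_subspace}, $\lipfree{N}$ is isometric to a closed subspace of $\lipfree{M}$.

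Finally, $K$ is closed in $\RR$ and infinite, because $\HH^1(K)>0$. Theorem \ref{th:godard}(b) then gives that $\lipfree{K}$ is isomorphic to $L^1([0,1])$. Composing these maps embeds $L^1([0,1])$ isomorphically into $\lipfree{M}$.

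There is no real obstacle: everything reduces to cited results. The only point needing care is the base-point bookkeeping; if the base point of $M$ must stay fixed, one can instead add it to $N$, since $\lipfree{N\cup\{0\}}$ still contains $\lipfree{N}$ as a subspace.
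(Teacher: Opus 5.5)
Your proof is correct and follows the same route as the paper: you extract a curve fragment $\varphi:K\to M$, identify $\lipfree{\varphi(K)}$ with a subspace of $\lipfree{M}$ that is isomorphic to $\lipfree{K}$ via $\widehat{\varphi}$, and conclude with Godard's theorem. Your explicit handling of the base point, which the paper leaves implicit, is a valid extra detail.
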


\begin{proof}
If $M$ is not purely $1$-unrectifiable then it contains a curve fragment, i.e. there exist a compact set $K\subset\RR$ with $\HH^1(K)>0$ and a bi-Lipschitz embedding $\varphi:K\to M$. Then $\lipfree{\varphi(K)}$ is a subspace of $\lipfree{M}$ that is isomorphic to $\lipfree{K}$ (via the linearization $\widehat{\varphi}$), which is in turn isomorphic to $L^1([0,1])$ by Theorem \ref{th:godard}.
\end{proof}

\section{Extension to the non-compact case}
\label{sec:non-compact}

This final section will focus on the possible extension of the equivalences in Theorem \ref{th:compact} to the general case where $M$ is no longer compact; we will assume, instead, that $M$ is complete. Our approach for extension will be to reduce the problem to the compact case by showing that some of the properties appearing in Theorem \ref{th:compact} are \emph{compactly determined}, meaning that they hold for $M$ as soon as they hold for all compact subsets of $M$. Note that pure $1$-unrectifiability \textit{is} compactly determined as it is characterized by the non-containment of curve fragments, which are compact sets. Thus, any property of $M$ that is equivalent to pure $1$-unrectifiability must also be compactly determined.

\subsection{Compact reduction}

The method of compact reduction for Lipschitz-free spaces was introduced in \cite{ANPP} by Petitjean and Proch\'azka together with the author (the other named author of \cite{ANPP}, C. No\^us, is actually a pseudonym). The starting point is a lemma by Kalton that was originally used to prove a restricted version of Theorem \ref{th:spu schur}. Kalton's lemma states, informally, that a weakly convergent sequence in a Lipschitz-free space $\lipfree{M}$ must be ``almost supported on a small subset of $M$''. By this, Kalton means that the sequence can be approximated by elements in the Lipschitz-free space over a small subset of $M$, meaning a set that can be covered by finitely many balls of small radius (that is, a small set in the sense of Kuratowski's measure of non-compactness). Using the notation
$$
[E]_r = \set{x\in M \,:\, d(x,E)\leq r} ,
$$
we consider small sets of the form $[E]_r$ where $E\subset M$ is finite and $r>0$ is small. Kalton's lemma then claims the following.

\begin{lemma}[{\cite[Lemma 4.5]{Kalton}}]
\label{lm:kalton}
Let $M$ be a metric space with finite diameter, $(w_n)$ be a weakly convergent sequence in $\lipfree{M}$, and $W=\set{w_n : n\in\NN}$. Then, for every $\ep>0$ and $r>0$ there exist a finite set $E\subset M$ and a mapping $T:W\to\lipfree{[E]_r}$ such that $\norm{w-T(w)}\leq\ep$ for all $w\in W$.
\end{lemma}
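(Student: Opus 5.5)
We argue by contradiction, using only the weak convergence of $(w_n)$ and elementary Lipschitz constructions. Since $(w_n)$ is weakly convergent, it is bounded, say $\norm{w_n}\leq C$. Fix $\ep,r>0$ and suppose the conclusion fails. The goal is to extract a subsequence $(w_{n_k})$ together with functions $f_k\in\Lip_0(M)$ such that:
\begin{itemize}
\item the $f_k$ have pairwise far-apart supports, at mutual distance of order $r$;
\item $\Lip(f_k)\leq 1$;
\item $\duality{f_k,w_{n_k}}>\ep/2$.
\end{itemize}
The function $f=\sum_k \pm f_k$, with suitable signs, is then Lipschitz with constant controlled by $1$ and by $\operatorname{diam}(M)/r$; finite diameter enters here. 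We then derive a contradiction with weak convergence, in the style of a gliding hump or Rosenthal-type argument.

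\textbf{Step 1: reformulating the failure.} First replace $(w_n)$ by $w_n-w$, where $w$ is the weak limit, and approximate $w$ by a finitely supported element. This reduces to the case where $(w_n)$ is weakly null, at the cost of enlarging $E$ by finitely many points. Negating the conclusion gives the following: for every finite $E$ there is some $n$ with $d(w_n,\lipfree{[E]_r})>\ep$. By Hahn--Banach and Corollary \ref{cr:lipfree_predual}, this yields $f\in B_{\Lip_0(M)}$ vanishing on $[E]_r$ with $\duality{f,w_n}>\ep$. Here we use that the annihilator of $\lipfree{N}$ consists of the functions vanishing on $N$.

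\textbf{Step 2: inductive construction.} Approximate each $w_n$ within $\ep/8$ by a finitely supported element, and let $S_n$ be its finite support. Build $n_1<n_2<\dots$ and $f_k$ inductively as follows. Given $n_1,\dots,n_{k-1}$, let $E_k$ be the union of the sets $S_{n_j}$ for $j<k$, together with the finitely many points needed to control the earlier $f_j$. Step 1 produces $n_k$ and a function $g_k$ vanishing on $[E_k]_r$ with $\duality{g_k,w_{n_k}}>\ep$. Truncate $g_k$ by multiplying with a cutoff that equals $1$ near $S_{n_k}$ and vanishes on $[E_k]_{r/2}$. Since the diameter is finite and the truncation is taken at scale $r$, this keeps $\Lip(f_k)\lesssim 1+\operatorname{diam}(M)/r$ and $\duality{f_k,w_{n_k}}>\ep/2$. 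Weak nullity lets us also require $\abs{\duality{f_j,w_{n_k}}}<\ep 2^{-j-4}$ for $j<k$, by choosing $n_k$ large.

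\textbf{Step 3: the main obstacle.} The hard part is the disjointness: later $f_k$ must not interact with earlier $w_{n_j}$, and sums $\sum_k f_k$ must remain Lipschitz. A Lipschitz bound on $\sum f_k$ requires the supports to be $r$-separated, and the supports of different $f_k$ need not be. Because $M$ need not be totally bounded, I would pass to a subsequence on which the supports are pairwise separated by some $r'<r$. If no infinite $r'$-separated family existed, the supports would eventually be covered by finitely many $r$-balls, contradicting Step 1. Once this separation is in hand, the sum $f=\sum_k f_k$ is Lipschitz with constant of order $\operatorname{diam}(M)/r$. Then
$$\duality{f,w_{n_k}}\geq \ep/2-\sum_{j\neq k}\abs{\duality{f_j,w_{n_k}}}.$$
The terms with $j<k$ are small by construction, and the terms with $j>k$ are zero by disjointness from $S_{n_k}$, up to the $\ep/8$ approximation error. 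So $\duality{f,w_{n_k}}\geq\ep/4$ for all $k$, which contradicts $w_{n_k}\to 0$ weakly.
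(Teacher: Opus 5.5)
Your overall strategy is sound: reduce to a weakly null sequence, argue by contradiction, and build a gliding hump of disjointly supported Lipschitz functions whose sum pairs uniformly away from zero with a subsequence. (The paper omits the proof and cites Kalton.) However, the step you yourself call the crux, disjointness of the supports of the $f_k$, is not achieved, and the remedy you propose in Step 3 does not work.

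The cutoff in Step 2 equals $1$ near $S_{n_k}$ and vanishes on $[E_k]_{r/2}$. It achieves nothing, because $g_k$ already vanishes on the larger set $[E_k]_r$. All you know about $\set{f_k\neq 0}$ is that it lies in $M\setminus[E_k]_r$, and these sets overlap for different $k$. Passing to a subsequence with $r'$-separated supports is not justified either. The supports are large sets that may, for instance, all contain a common point. The dichotomy ``infinite separated family or covered by finitely many balls'' is a total-boundedness statement about points, not about a family of sets, so nothing in Step 1 is contradicted. Separation is also not what the Lipschitz bound needs. If $f_j(y)=f_k(x)=0$, then $\abs{f_j(x)-f_k(y)}\leq\abs{f_j(x)-f_j(y)}+\abs{f_k(x)-f_k(y)}$, so disjointness alone gives $\Lip(\sum_k f_k)\leq 2\sup_k\Lip(f_k)$.

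The fix is to localize around $S_{n_k}$. Take $f_k=\chi_k g_k$ with $\chi_k(x)=\max\set{0,1-2d(x,S_{n_k})/r}$. Then:
\begin{itemize}
\item $f_k=g_k$ on $S_{n_k}$;
\item $\set{f_k\neq 0}\subset[S_{n_k}]_{r/2}$;
\item $\Lip(f_k)\leq 1+2\operatorname{diam}(M)/r=:L$, since $\abs{g_k}\leq\operatorname{diam}(M)$. This is where finite diameter enters.
\end{itemize}
Since $S_{n_k}\subset E_j$ for every $j>k$, each later $f_j$ vanishes on $[S_{n_k}]_r$. So the supports are automatically disjoint (even $r/2$-separated), with no subsequence extraction.

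Two further repairs are needed.
\begin{itemize}
\item The approximants $m_n$ of $w_n$ must satisfy $\norm{w_n-m_n}\leq\eta$ with $\eta$ of order $\ep/L$, not $\ep/8$. The approximation error gets multiplied by $\Lip(f_k)\leq L$ and by $\Lip(\sum_{j>k}f_j)\leq 2L$. The tail must also be handled as the single function $\sum_{j>k}f_j$, which vanishes on $S_{n_k}$ and so gives $\abs{\duality{\sum_{j>k}f_j,w_{n_k}}}\leq 2L\eta$. Term-by-term errors would not be summable.
\item The negation of the conclusion only yields \emph{some} index $n$, not a large one. To get $n_k>N$, apply it to $E_k\cup S_1\cup\cdots\cup S_N$, where $S_i$ is the support of $m_i$. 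Since $w_1,\dots,w_N$ lie within $\eta<\ep$ of $\lipfree{[E_k\cup S_1\cup\cdots\cup S_N]_r}$, the witness must have index beyond $N$.
\end{itemize}
With these changes, $\duality{\sum_j f_j,w_{n_k}}>\ep-4L\eta-\ep/16$ for every $k$. This exceeds $\ep/2$ for $\eta=\ep/(16L)$, contradicting weak nullity.
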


In \cite{ANPP}, Kalton's lemma was improved in two ways. The first was to extend its range of application from weakly convergent sequences to more general sets, like weakly compact sets, as well as removing the constraint that $M$ has finite diameter.

\begin{proposition}[{\cite[Proposition 3.6]{ANPP}}]
\label{pr:kalton premium}
Let $M$ be a metric space and $W$ be a weakly compact subset of $\lipfree{M}$. Then, for every $\ep>0$ and $r>0$ there exist a finite set $E\subset M$ and a mapping $T:W\to\lipfree{[E]_r}$ such that $\norm{w-T(w)}\leq\ep$ for all $w\in W$.
\end{proposition}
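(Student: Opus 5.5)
The plan is to reduce the statement to Kalton's Lemma \ref{lm:kalton} in two steps: first upgrade from weakly convergent sequences to weakly compact sets, then remove the finite diameter assumption.

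\textbf{Step 1: weakly compact sets, finite diameter.} Assume first that $M$ has finite diameter. Argue by contradiction. Suppose that for some $\ep,r>0$ no finite $E$ works. Then for every finite $E\subset M$ there is $w\in W$ with $d(w,\lipfree{[E]_r})>\ep$. Build inductively finite sets $E_1\subset E_2\subset\cdots$ and points $w_n\in W$ with $d(w_n,\lipfree{[E_n]_r})>\ep$. Here $E_{n+1}$ is chosen to contain $E_n$ together with a finite $r$-net (up to small error) of the supports of finitely supported approximants of $w_1,\dots,w_n$. Since $W$ is weakly compact, Eberlein--\v{S}mulian gives a weakly convergent subsequence $(w_{n_k})$. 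Apply Lemma \ref{lm:kalton} to it with $\ep/2$ and radius $r/2$: this yields a single finite $E$ and approximants of all $w_{n_k}$ in $\lipfree{[E]_{r/2}}$ within $\ep/2$. The contradiction should come from showing that $E$ is eventually absorbed by the $E_n$, so that $\lipfree{[E]_{r/2}}\subset\lipfree{[E_n]_r}$ for large $n$. This absorption is not automatic. A cleaner route is to pick $E_{n+1}$ so that every point of $[E]_{r/2}$ lies in $[E_{n+1}]_r$ whenever $E$ is a finite set of the relevant kind. If that fails, I would instead run the diagonal/gliding-hump argument directly inside Kalton's proof. That argument uses functions supported near the $E$'s and only ever extracts sequences, so it works verbatim for any sequence in $W$, and weak compactness supplies the needed convergent subsequences.

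\textbf{Step 2: unbounded $M$.} Use a truncation. Fix $R$ large and the $1$-Lipschitz retraction-type map $\Phi_R(x)$, which "pushes" points onto the ball $B(0,R)$ by radial rescaling of the distance. On free spaces, linearizations are not contractions onto subspaces in general. So instead I would use the standard operators $T_R$ multiplying by a Lipschitz cut-off $h_R$, with $h_R=1$ on $B(0,R)$ and $0$ outside $B(0,2R)$ and $\Lip(h_R)\leq 1/R$. Kalton-type estimates give that these weighting operators $w\mapsto h_R w$ (dual to $f\mapsto fh_R$) are bounded uniformly in $R$ on $\lipfree{M}$, and that $\norm{w-h_Rw}\to0$ for each $w$. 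Weak compactness then upgrades this to uniform convergence on $W$. Since the operators are uniformly bounded and converge pointwise, the convergence is uniform on norm-compact sets. For weakly compact sets I expect to need an additional argument: show that $\sup_{w\in W}\norm{w-h_Rw}\to0$ by contradiction, extracting a weakly convergent sequence and applying Lemma \ref{lm:kalton} to its restriction. Once this holds, $h_RW$ is weakly compact in $\lipfree{B(0,2R)}$, which has finite diameter, and Step 1 applies with $\ep/2$.

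\textbf{Main obstacle.} The hardest point is the uniform tail estimate in Step 2, namely that weakly compact sets are uniformly "tight at infinity". I would first try to prove it from the fact that a weakly null sequence of tails $w_n-h_{R_n}w_n$ with supports escaping to infinity behaves like an $\ell_1$-basis. Such a sequence cannot be weakly convergent unless its norms tend to $0$.
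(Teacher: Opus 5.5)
There is a genuine gap in each of your two steps. Your architecture (Eberlein--\v{S}mulian plus Lemma~\ref{lm:kalton} for bounded $M$, then a cut-off at infinity) differs from the paper, which only remarks that Kalton's own argument adapts directly, and a black-box reduction of this kind can be made to work. But as written, both steps stop exactly where the proposition goes beyond Lemma~\ref{lm:kalton}.

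\textbf{Step 1.} You rightly note that the finite set $E$ from Kalton's lemma need not be absorbed by your $E_n$. Taking $E_{n+1}$ to be nets of supports of $w_1,\dots,w_n$ does not help, since $E$ can be anywhere in $M$. The missing idea is as follows.
\begin{enumerate}[label={\upshape{(\alph*)}}]
\item Take $w_n$ with $d(w_n,\lipfree{[E_n]_r})\geq\ep$, where the $E_n$ increase and $\bigcup_n E_n$ is dense in a closed separable set $N\ni 0$ with $w_n\in\lipfree{N}$ for all $n$. This is possible because each $w_k$ lies in some $\lipfree{N_k}$ with $N_k$ closed and separable; let $E_{n+1}$ absorb points of dense sequences in $N_1,\dots,N_n$ by diagonal enumeration.
\item Since $F\subset E_n$ implies $\lipfree{[F]_r}\subset\lipfree{[E_n]_r}$, every subsequence still contains, for each finite $F\subset\bigcup_nE_n$, elements at distance $\geq\ep$ from $\lipfree{[F]_r}$.
\item Apply Lemma~\ref{lm:kalton} \emph{in} $\lipfree{N}$, which forces $E\subset N$, to a weakly convergent subsequence with $\ep/2$ and $r/2$.
\item Choose a finite $F\subset\bigcup_nE_n$ with $E\subset[F]_{r/2}$. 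Then $[E]_{r/2}\subset[F]_r\subset[E_{n_k}]_r$ for large $k$, a contradiction.
\end{enumerate}
Your fallback of running Kalton's proof ``verbatim'' on sequences in $W$ hits the same obstruction, because extracting subsequences loses witnesses unless this monotone bookkeeping is arranged.

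\textbf{Step 2.} The uniform tail estimate $\sup_{w\in W}\norm{w-h_Rw}\to0$ is the entire content of the unbounded case, and it is left unproved. Your suggested arguments do not supply it, for three reasons.
\begin{enumerate}[label={\upshape{(\alph*)}}]
\item Lemma~\ref{lm:kalton} needs bounded $M$, so applying it to a restriction or truncation cannot control what happens outside $B(0,R)$.
\item The tails $(I-h_{R_n})w_n$ are not known to be weakly null a priori, since the operators change with $n$.
\item Uniform boundedness of $w\mapsto h_Rw$ gives no $\ell_1$ lower estimate for pieces living on separated annuli.
\end{enumerate}
The tool you need is Kalton's annular decomposition. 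Take Lipschitz bumps $h_k$ adapted to $\set{2^{k-1}\leq d(x,0)\leq 2^{k+1}}$ with $\sum_kh_k=1$ on $M\setminus\set{0}$. Then $\sum_k\norm{h_k\mu}\leq C\norm{\mu}$ for all $\mu\in\lipfree{M}$: norm each $h_k\mu$ by some $f_k\in B_{\Lip_0(M)}$ and test $\mu$ against $\sum_kf_kh_k$, which is $C$-Lipschitz. Hence $\mu\mapsto(h_k\mu)_k$ is a bounded operator into an $\ell_1$-sum of free spaces. In such a sum, weakly compact sets have uniformly small tails, by the Schur-type gliding hump with norming functionals on disjoint blocks. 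This is the precise form of your ``$\ell_1$-basis'' intuition. It gives
\[
\sup_{w\in W}\norm{w-\textstyle\sum_{k\leq n}h_kw}\to0, \qquad \textstyle\sum_{k\leq n}h_kw\in\lipfree{B(0,2^{n+1})},
\]
and the repaired Step 1 then finishes the proof.
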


The proof of Proposition \ref{pr:kalton premium} is a straightforward adaptation of Kalton's original and will be omitted here. In fact, Kalton's lemma holds for more general classes of subsets of $\lipfree{M}$, like weakly precompact sets and (V*)-sets; see \cite[Theorem 2.7]{APQ}. We will not define these classes of sets as they are not needed here.

The second improvement to Kalton's lemma concerns its conclusion. Namely, one can always substitute the ``small sets'' with more manageable \textit{compact} sets (assuming completeness of $M$, otherwise one may need to use totally bounded sets instead). Moreover, the approximation mapping $T$ can be chosen to be linear and continuous.

\begin{proposition}[{\cite[Theorem 3.2]{ANPP}}]
\label{pr:compact reduction}
Let $M$ be a complete metric space. For a subset $W\subset\lipfree{M}$, the following are equivalent:
\begin{enumerate}[label={\upshape{(\roman*)}}]
\item For every $\ep>0$ and $r>0$ there exist a finite set $E\subset M$ and a mapping $T:W\to\lipfree{[E]_r}$ such that $\norm{w-T(w)}\leq\ep$ for all $w\in W$.
\item For every $\ep>0$ there exist a compact set $K\subset M$ and a mapping $T:W\to\lipfree{K}$ such that $\norm{w-T(w)}\leq\ep$ for all $w\in W$.
\end{enumerate}
Moreover, the mapping $T$ can be chosen to be continuous and linear. More precisely, there exists a sequence of bounded linear operators $T_n:\lipfree{M}\to\lipfree{M}$ that converge to $T$ uniformly on $W$.
\end{proposition}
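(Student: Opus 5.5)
The implication (ii)$\Rightarrow$(i) is immediate. Given $\ep,r>0$, take the compact $K$ from (ii) and cover it by finitely many balls of radius $r$ centred at the points of a finite set $E\subset M$. Then $K\subset[E]_r$, so $\lipfree{K}\subset\lipfree{[E]_r}$ under the identification of Proposition \ref{pr:lipfree_subspace}, and the same map $T$ works. (We may always add $0$ to the sets involved, which changes nothing.)

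For (i)$\Rightarrow$(ii) I would construct the compact set as a countable intersection $K=\bigcap_k [E_k]_{2r_k}$, with $r_k=2^{-k}$ and intersected with one fixed bounded set. Such a $K$ is closed and totally bounded, hence compact because $M$ is complete. The approximating elements should then lie in all the spaces $\lipfree{[E_k]_{2r_k}}$ at once, and Theorem \ref{th:intersection} identifies that intersection with $\lipfree{K}$. To move elements between these subspaces in a linear and controlled way, I would use weighted point-evaluation operators. For a bounded Lipschitz function $h$ with bounded support, let $T_h:\lipfree{M}\to\lipfree{M}$ be the operator determined by $T_h(\delta(x))=h(x)\delta(x)$. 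It is bounded: dualize and check that $f\mapsto hf$ maps $\Lip_0(M)$ to itself with norm at most $\norm{h}_\infty+\Lip(h)\sup\set{d(x,0):x\in\operatorname{supp}h}$, and that this map is pointwise-continuous on bounded sets, so it is an adjoint by Corollary \ref{cr:lipfree_predual}. Two further properties are needed. First, $T_h$ maps $\lipfree{M}$ into $\lipfree{\operatorname{supp}h\cup\set{0}}$. Second, if $h\equiv 1$ on a set $A$, then $T_h$ is the identity on $\lipfree{A}$. These operators also commute with one another, since $T_hT_{h'}=T_{hh'}$.

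The construction proceeds in three steps. Step 1 localizes to a bounded set. Apply (i) once, with $\ep/2$ and $r=1$, obtaining $E_0$. Then apply $T_{h_0}$, where $h_0$ equals $1$ on $[E_0]_1$, vanishes off $[E_0]_2$, and has Lipschitz constant $1$. The identity on $\lipfree{[E_0]_1}$ is preserved, so all subsequent work happens inside a fixed ball $B(0,R)$ with $R$ determined by $E_0$. Step 2 is an induction. At stage $k$ I let $C_{k-1}$ be the norm of $S_{k-1}=T_{h_{k-1}}\cdots T_{h_0}$. I apply (i) with radius $r_k$ and tolerance $\ep_k=\ep 2^{-k-1}\big/\big(C_{k-1}(2+R/r_k)\big)$, which yields $E_k$. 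I then take $h_k$ equal to $1$ on $[E_k]_{r_k}$, supported in $[E_k]_{2r_k}$, with $\Lip(h_k)\leq 1/r_k$; this bounds $\norm{T_{h_k}}$, and hence also $\norm{I-T_{h_k}}$, by a constant depending only on $R$ and $r_k$. Because $(I-T_{h_k})$ annihilates $\lipfree{[E_k]_{r_k}}$ and commutes with $S_{k-1}$,
$$\norm{S_{k-1}w-S_kw}=\norm{S_{k-1}(I-T_{h_k})w}\leq C_{k-1}\norm{I-T_{h_k}}\,\ep_k\leq \ep 2^{-k-1}$$
for all $w\in W$. Step 3 takes the limit. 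The sequence $(S_k)$ is then uniformly Cauchy on $W$ and converges uniformly to a map $T$ with $\norm{w-T(w)}\leq\ep$. Moreover, $S_j w\in\lipfree{[E_k]_{2r_k}\cup\set{0}}$ for all $j\geq k$, so $T(w)$ lies in the closed intersection of these spaces, which is $\lipfree{K}$ by Theorem \ref{th:intersection}. This also gives the "moreover" clause, with $T_n=S_n$.

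The main obstacle is the apparent circularity in Step 2. The tolerance $\ep_k$ must be chosen before $E_k$ is known, yet the norm of $T_{h_k}$ depends on where $E_k$ lies. I would resolve this as above: localizing first into $B(0,R)$ and multiplying every $h_k$ by $h_0$ makes every norm bound depend only on $R$ and $r_k$, never on $E_k$. It is this point, together with the norm estimate for the weighted operators $T_h$, that I would verify most carefully.
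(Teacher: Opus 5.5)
Your architecture matches the paper's sketch: iterate (i) at scales $r_k\to0$, compose linear localizing operators, and identify the limit space via Theorem \ref{th:intersection}. Applying (i) to the original $W$ at every stage, and letting the commuting weighting operators $T_h$ produce the nesting, is a reasonable simplification. It spares you the paper's claim that each image $W_n$ again satisfies (i) inside $\lipfree{[E_1]_{r_1}\cap\dots\cap[E_n]_{r_n}}$. However, the norm bookkeeping, which you rightly flag as the crux, fails as written in two places.

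First, Step 1 never bounds $\norm{w-T_{h_0}w}$, yet Step 3 needs this to be at most $\ep/2$. With $v_0\in\lipfree{[E_0]_1}$ and $\norm{w-v_0}\leq\ep/2$, the only estimate available is $\norm{(I-T_{h_0})(w-v_0)}\leq\bigl(2+\sup_{x\in[E_0]_2}d(x,0)\bigr)\ep/2$, and it cannot be improved in general. For example, take $M=\RR$ and $W=\set{\ep\,\delta(D+\tfrac32)}$. This $w$ is within $\ep/2$ of $\ep\,\delta(D+1)$, so (i) may return $E_0=\set{D}$. Any $h_0$ as in Step 1 has $h_0(D+\tfrac32)\leq\tfrac12$. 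Hence every $S_kw$ equals $c_k\,\delta(D+\tfrac32)$ with $0\leq c_k\leq\ep/2$, and $\norm{w-T(w)}\geq\tfrac{\ep}{2}(D+\tfrac32)$. This is your circularity again, at a stage with no earlier localization to lean on. The fix is a cutoff whose slope scales inversely with its radius. Take $h_0=1$ on $B(0,\rho)\supset[E_0]_1$, $h_0=0$ off $B(0,2\rho)$, and $\Lip(h_0)\leq1/\rho$. Then $\norm{T_{h_0}}\leq3$ whatever $E_0$ is, so you can use tolerance $\ep/8$ in Step 1 and set $R=2\rho$.

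Second, your claim that $\norm{T_{h_k}}$ is bounded in terms of $R$ and $r_k$ is false for the $h_k$ you define. Since (i) is applied to $W$, $E_k$ need not lie near $B(0,R)$. If $h_k(x)=1$, $h_k(y)=0$ and $d(x,y)=r_k$ (e.g. in $M=\RR$), then $T_{h_k}(\delta(x)-\delta(y))=\delta(x)$, so $\norm{T_{h_k}}\geq d(x,0)/r_k$.

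Your remedy, replacing $h_k$ by $h_kh_0$, bounds the norm but destroys the identity you rely on. Since $h_kh_0\neq1$ on $[E_k]_{r_k}\cap\set{h_0<1}$, the operator $I-T_{h_kh_0}$ no longer annihilates $\lipfree{[E_k]_{r_k}}$. Because $T_{h_0}^2=T_{h_0^2}\neq T_{h_0}$, the modified differences applied to $v_k$ become $T_{h_0^kh_1\cdots h_{k-1}(1-h_0)}v_k$, which $\ep_k$ does not control.

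What works is to keep $h_k$ and commute in the other order. With $v_k\in\lipfree{[E_k]_{r_k}}$ and $\norm{w-v_k}\leq\ep_k$, one has $S_{k-1}(I-T_{h_k})w=(I-T_{h_k})S_{k-1}(w-v_k)$. Here $S_{k-1}(w-v_k)$ lies in $\lipfree{B(0,R)}$ with norm at most $C_{k-1}\ep_k$. On that subspace (Proposition \ref{pr:lipfree_subspace}), $T_{h_k}$ is the weighting operator of $h_k\restrict_{B(0,R)}$, whose norm is at most $1+R/r_k$ by your own estimate. This gives exactly your constant $2+R/r_k$.

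Alternatively, multiply $h_k$ by a cutoff equal to $1$ on all of $\operatorname{supp}h_0$ rather than by $h_0$; this leaves every $S_k$ unchanged. With these two repairs, the rest of your argument goes through, including compactness of $K$ and the application of Theorem \ref{th:intersection}.
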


We clarify here that the approximation mapping $T$ is only defined on $W$, not on the whole space $\lipfree{M}$, and does not extend to a continuous linear operator on $\lipfree{M}$ in general. Similarly, $T_n(x)$ is only guaranteed to converge for $x\in W$, and the norms of the operators $T_n$ will not be bounded uniformly in general.

We only sketch the main idea of the proof of Proposition \ref{pr:compact reduction}. Of course (ii) implies (i) as any compact set $K$ can be covered with finitely many balls of any given radius, so one only has to prove that (i) implies (ii). Let $(r_n)$, $(\ep_n)$ be sequences of positive real numbers decreasing to $0$, with $\sum_n\ep_n\leq\ep$. By assumption, there are a finite set $E_1\subset M$ and an approximation mapping $T_1:W\to\lipfree{[E_1]_{r_1}}$ with $\norm{w-T_1(w)}\leq\ep_1$ for all $w\in W$. It can be shown that the image $W_1=T_1(W)$ satisfies (i) as well \textit{in the Lipschitz-free subspace $\lipfree{[E_1]_{r_1}}$}, so one can find a finite set $E_2\subset [E_1]_{r_1}$ and an approximation mapping $T_2:W_1\to\lipfree{[E_1]_{r_1}\cap[E_2]_{r_2}}$ with $\norm{w-T_2(w)}\leq\ep_2$ for all $w\in W_1$. Set $W_2=T_2(W_1)$. Now one continues iteratively, obtaining finite sets $E_n$ and mappings
$$
T_n:W_{n-1}\to\lipfree{[E_1]_{r_1}\cap[E_2]_{r_2}\cap\ldots\cap [E_n]_{r_n}}
$$
with error $\ep_n$ and image $W_n$. By choosing the constants $r_n$ and $\ep_n$ appropriately, one can guarantee that the iterations are always possible and the approximations converge to some set that will be contained in
$$
\bigcap_{n=1}^\infty \lipfree{[E_1]_{r_1}\cap[E_2]_{r_2}\cap\ldots\cap [E_n]_{r_n}} = \mathcal{F}\pare{\bigcap_{n=1}^\infty [E_n]_{r_n}} ,
$$
where we crucially use Theorem \ref{th:intersection}. The set $\bigcap_{n=1}^\infty [E_n]_{r_n}$ is then the sought compact $K$: it is an intersection of closed sets and also totally bounded as, for any $r>0$, it can be covered with
$$
[E_n]_r = \bigcup_{x\in E_n} B(x,r)
$$
whenever $r_n\leq r$. In the construction, one may even arrange for the mappings $T_n$ to be bounded linear operators defined on the whole space $\lipfree{[E_1]_{r_1}\cap\ldots\cap [E_{n-1}]_{r_{n-1}}}$; the operators $T_n$ in the statement of Proposition \ref{pr:compact reduction} are then the compositions $T_n\circ\ldots\circ T_2\circ T_1$ in the notation of our sketched proof.

The combination of Propositions \ref{pr:kalton premium} and \ref{pr:compact reduction} allows us to prove rather easily that many Banach space properties of Lipschitz-free spaces are compactly determined, as long as they are somehow related to the weak topology. One such example is the Schur property.

\begin{theorem}[{\cite[Corollary 2.7]{ANPP}}]
\label{th:schur cd}
Let $M$ be a complete metric space. Then $\lipfree{M}$ has the Schur property if and only if $\lipfree{K}$ has the Schur property for every compact set $K\subset M$.
\end{theorem}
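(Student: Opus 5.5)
One direction is immediate. If $\lipfree{M}$ has the Schur property, then so does every closed subspace. By Proposition \ref{pr:lipfree_subspace}, each $\lipfree{K}$ with $K\subset M$ compact (translating the base point into $K$ if necessary, which only changes $\lipfree{K}$ up to isometry) is isometric to a closed subspace of $\lipfree{M}$, so each $\lipfree{K}$ has the Schur property.

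For the converse, assume every $\lipfree{K}$ has the Schur property. Let $(w_n)$ be a weakly null sequence in $\lipfree{M}$; it suffices to treat weakly null sequences, since if $w_n \to w$ weakly we apply the argument to $w_n - w$. Suppose for contradiction that $(w_n)$ is not norm null.

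\begin{itemize}
\item Passing to a subsequence, we may assume $\norm{w_n}\geq c>0$ for all $n$.
\item The set $W=\set{w_n:n\in\NN}\cup\set{0}$ is weakly compact.
\item Fix $\ep\in(0,c/3)$. Proposition \ref{pr:kalton premium} gives condition (i) of Proposition \ref{pr:compact reduction} for $W$.
\item Proposition \ref{pr:compact reduction} then yields a compact $K\subset M$, which we may take to contain $0$ by adding that point, together with a map $T:W\to\lipfree{K}$ satisfying $\norm{w-T(w)}\leq\ep$ on $W$.
\item Moreover, $T$ is a uniform limit on $W$ of bounded linear operators $T_k:\lipfree{M}\to\lipfree{M}$.
\end{itemize}

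The key step is to show that $(T(w_n))_n$ is weakly null in $\lipfree{K}$. For each fixed $k$, the sequence $(T_k w_n)_n$ is weakly null, because bounded linear operators are weak-to-weak continuous. Given $\phi\in B_{\lipfree{M}^*}$ and $\eta>0$, choose $k$ with $\sup_{w\in W}\norm{T_k w - T w}<\eta$. Then
$$
\limsup_n \abs{\phi(Tw_n)} \leq \eta + \limsup_n\abs{\phi(T_kw_n)} = \eta .
$$
So $Tw_n\to 0$ weakly in $\lipfree{M}$. Since $\lipfree{K}$ is a closed subspace, Hahn--Banach shows that $Tw_n\to0$ weakly in $\lipfree{K}$ as well. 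By the Schur property of $\lipfree{K}$, $\norm{Tw_n}\to 0$. Hence
$$
\limsup_n\norm{w_n}\leq \ep + \limsup_n\norm{Tw_n} = \ep < c,
$$
which is a contradiction.

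The main obstacle is the weak nullity of $(T(w_n))$. The map $T$ itself is neither linear nor defined on all of $\lipfree{M}$, so weak continuity cannot be applied to it directly. It is exactly the approximation of $T$ by the linear operators $T_k$, uniformly on $W$, in Proposition \ref{pr:compact reduction} that transfers weak convergence to the compact piece.
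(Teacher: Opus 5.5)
Your proof is correct and follows essentially the same route as the paper: Kalton's lemma and compact reduction produce $K$ and $T$, weak nullity passes to $(T(w_n))$ through the uniform approximation by the linear operators $T_k$, and the Schur property of $\lipfree{K}$ finishes the argument. The differences are cosmetic. You argue by contradiction after passing to a subsequence, and you explicitly check that $\{w_n\}\cup\{0\}$ is weakly compact so that Proposition \ref{pr:kalton premium} applies, a step the paper leaves implicit.
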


\begin{proof}
The forward implication is obvious as the Schur property passes to subspaces. For the converse, let $(w_n)$ be a sequence in $\lipfree{M}$ that converges weakly to $0$; our goal is to show that $\norm{w_n}\to 0$. Fix $\ep>0$. By Propositions \ref{pr:kalton premium} and \ref{pr:compact reduction}, we can find a compact set $K\subset M$, approximations $T(w_n)\in\lipfree{K}$ of $w_n$ such that $\norm{w_n-T(w_n)}\leq\ep$, and a sequence $(T_k)$ of continuous linear operators on $\lipfree{M}$ such that $T_k(w_n)$ converge to $T(w_n)$ uniformly on $n$.

We claim that $T(w_n)$ converges weakly to $0$ as well. Indeed, fix $\delta>0$ and $f\in\Lip_0(M)$, $f\neq 0$, and let us check that $\abs{\duality{f,T(w_n)}}\leq 2\delta$ when $n$ is large enough. First, by uniform convergence we may find $k\in\NN$ such that $\norm{T_k(w_n)-T(w_n)}\leq\delta/\Lip(f)$ for all $n$. Since $T_k$ is a continuous operator, it is continuous with respect to the weak topology as well, and so $T_k(w_n)$ converges weakly to $0$; thus, we have $\abs{\duality{f,T_k(w_n)}}\leq\delta$ for $n$ large enough. For such $n$, we get
$$
\abs{\duality{f,T(w_n)}} \leq \abs{\duality{f,T(w_n)-T_k(w_n)}} + \abs{\duality{f,T_k(w_n)}} \leq \Lip(f)\cdot\norm{T(w_n)-T_k(w_n)} + \delta \leq 2\delta .
$$
This proves our claim.

By assumption, $\lipfree{K}$ has the Schur property, so we conclude that $\norm{T(w_n)}\to 0$. Thus, for $n$ large enough we have $\norm{T(w_n)}\leq\ep$ and $\norm{w_n}\leq\norm{w_n-T(w_n)}+\norm{T(w_n)}\leq 2\ep$. Therefore $\norm{w_n}\to 0$ as required.
\end{proof}

Section 2 of \cite{ANPP} contains other examples of compactly determined properties of $\lipfree{M}$, such as weak sequential completeness, the approximation property, or the Dunford-Pettis property. In all cases, the proof follows a similar scheme: approximate a certain set in a subspace $\lipfree{K}$ using Propositions \ref{pr:kalton premium} and \ref{pr:compact reduction}, apply the hypothesis on $\lipfree{K}$ to show that the approximated object possesses the desired property, and finally lift that property to the original set.

\subsection{Probabilistic compact reduction}
\label{sec:probabilistic}

Unfortunately, the method presented above is not enough to establish compact determination of the Radon-Nikod\'ym property when used as is. This is possibly due to the fact that there is no satisfactory characterization of the Radon-Nikod\'ym property in terms of weak topological properties of certain subsets. Instead, it is possible to develop a probabilistic version of the method, more suitable to the characterizations of the Radon-Nikod\'ym property in terms of integration. This modification can be found in Section 4 of \cite{AGPP}. The core arguments remain the same but the technical complexity is increased due to the different context, so we only give an overall description. We recommend \cite[Chapters 1 and 2]{Pisier} or \cite[Chapter 5]{BenyaminiLindenstrauss} for reference on this probabilistic setting.

The main idea is to replace (sets of) \textit{elements of $\lipfree{M}$} in Propositions \ref{pr:kalton premium} and \ref{pr:compact reduction} with (sets of) \textit{random variables taking values in $\lipfree{M}$}, that is, elements of the Bochner space $L^1(\mu;\lipfree{M})$ of $\lipfree{M}$-valued integrable functions on a given probability measure space. The resulting probabilistic version of Proposition \ref{pr:compact reduction} is valid with no further change (see \cite[Proposition 4.2]{AGPP}). For Proposition \ref{pr:kalton premium}, we replace the weakly convergent sequence with a martingale in $L^1(\mu;\lipfree{M})$ (see \cite[Proposition 4.3]{AGPP}). Finally, we use the following characterization of the Radon-Nikod\'ym property:
\begin{itemize}
\item A Banach space $X$ has the Radon-Nikod\'ym property if and only if every uniformly integrable martingale in $L^1(\mu;X)$ converges in norm (see \cite[Theorem 2.9]{Pisier}).
\end{itemize}
Based on it, an argument similar to that of Theorem \ref{th:schur cd} can prove compact determination of the Radon-Nikod\'ym property.



\begin{theorem}[{\cite[Corollary 4.5]{AGPP}}]
\label{th:rnp cd}
Let $M$ be a complete metric space. Then $\lipfree{M}$ has the Radon-Nikod\'ym property if and only if $\lipfree{K}$ has the Radon-Nikod\'ym property for every compact set $K\subset M$.
\end{theorem}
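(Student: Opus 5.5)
The forward implication is immediate, since the Radon-Nikod\'ym property passes to closed subspaces and $\lipfree{K}$ is a subspace of $\lipfree{M}$ by Proposition \ref{pr:lipfree_subspace}. For the converse I will mimic the proof of Theorem \ref{th:schur cd}, replacing weakly null sequences by martingales. I will use the martingale characterization: $\lipfree{M}$ has the Radon-Nikod\'ym property if and only if every uniformly integrable martingale $(X_n)$ in $L^1(\mu;\lipfree{M})$ converges in $L^1$-norm. It is enough to show that such a martingale is Cauchy in $L^1(\mu;\lipfree{M})$. So I fix a filtration $(\Sigma_n)$, a uniformly integrable martingale $(X_n)$ and $\ep>0$, and set $W=\set{X_n:n\in\NN}\subset L^1(\mu;\lipfree{M})$.

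The first step is the probabilistic Kalton lemma \cite[Proposition 4.3]{AGPP}. Applied to $W$, it gives, for every $r>0$, a finite set $E\subset M$ and approximations of each $X_n$ by random variables with values in $\lipfree{[E]_r}$, with $L^1$-error at most $\ep$, uniformly in $n$. Feeding this into the probabilistic version of Proposition \ref{pr:compact reduction} \cite[Proposition 4.2]{AGPP} yields a compact $K\subset M$ and a map $T:W\to L^1(\mu;\lipfree{K})$ with $\norm{X_n-T(X_n)}_{L^1}\leq\ep$ for all $n$. The same result gives bounded linear operators $T_k$ on $\lipfree{M}$, acting pointwise on random variables, such that $T_k\circ X_n\to T(X_n)$ in $L^1$ uniformly in $n$.

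The second step, which I expect to be the main obstacle, is to show that $Y_n=T(X_n)$ is again a uniformly integrable martingale in $L^1(\mu;\lipfree{K})$, or at least close to one. The difficulty is that $T$ is not linear on the whole space. However, each $T_k$ is bounded and linear, so $T_k\circ X_n$ is a martingale: conditional expectation commutes with bounded operators applied pointwise. It is also uniformly integrable, since $\norm{T_k X_n}\leq\norm{T_k}\norm{X_n}$. Since $\mathbb{E}[\cdot\mid\Sigma_m]$ is an $L^1$-contraction and the convergence $T_kX_n\to Y_n$ is uniform in $n$, the limit satisfies $\mathbb{E}[Y_n\mid\Sigma_m]=Y_m$ for $m\leq n$. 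Uniform integrability of $(Y_n)$ follows because $\norm{Y_n}\leq\norm{T_kX_n}+\delta$ pointwise-in-$L^1$ for a fixed large $k$, and a uniformly integrable family plus a uniformly $L^1$-small perturbation remains uniformly integrable. If the operators from \cite[Proposition 4.2]{AGPP} are arranged to commute with conditional expectations directly, as in the construction where each $T_n$ is linear on the relevant subspace, this step simplifies.

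Finally, $\lipfree{K}$ has the Radon-Nikod\'ym property by hypothesis, so $(Y_n)$ converges in $L^1(\mu;\lipfree{K})$. Hence there is $N$ with $\norm{Y_n-Y_m}_{L^1}\leq\ep$ for $n,m\geq N$. Then
$$\norm{X_n-X_m}_{L^1}\leq\norm{X_n-Y_n}_{L^1}+\norm{Y_n-Y_m}_{L^1}+\norm{Y_m-X_m}_{L^1}\leq 3\ep .$$
So $(X_n)$ is Cauchy, and therefore convergent in the complete space $L^1(\mu;\lipfree{M})$, which proves that $\lipfree{M}$ has the Radon-Nikod\'ym property.
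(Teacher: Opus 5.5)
Your proposal is correct and follows essentially the same route as the paper, which only sketches this proof. Both use the martingale characterization of the RNP \cite[Theorem 2.9]{Pisier}, the probabilistic Kalton lemma and compact reduction \cite[Propositions 4.3 and 4.2]{AGPP}, and an argument modelled on Theorem \ref{th:schur cd}; your use of the operators $T_k$ to show that $T(X_n)$ is again a uniformly integrable martingale plays the role that showing $T(w_n)$ is weakly null plays there. The one step to phrase more carefully is uniform integrability: it holds because for \emph{every} $\delta>0$ there is some $k=k(\delta)$ with $\sup_n\norm{Y_n-T_k\circ X_n}_{L^1}\leq\delta$, and not because of a single small perturbation.
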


\subsection{Equivalences in the general case}

Having succeeded in establishing that some of the properties in Theorem \ref{th:compact} are compactly determined, we can extend the equivalence between them to the non-compact case. We keep the same numbering for the properties as in Theorem \ref{th:compact}.

\begin{theorem}[{\cite[Theorem C]{AGPP}}]
\label{th:non-compact}
Let $M$ be a complete metric space. Then the following are equivalent:
\begin{enumerate}[label={\upshape{(\roman*)}}]
\item $M$ is purely $1$-unrectifiable,
\setcounter{enumi}{4} 
\item $\lipfree{M}$ has the Radon-Nikod\'ym property,
\item $\lipfree{M}$ has the Schur property,
\item $\lipfree{M}$ does not contain isomorphic copies of $L^1([0,1])$.
\end{enumerate}
\end{theorem}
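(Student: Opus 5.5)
The plan is to reduce everything to Theorem \ref{th:compact} through the compact determination results, Theorems \ref{th:schur cd} and \ref{th:rnp cd}. The key observation is that pure $1$-unrectifiability is compactly determined. By Theorem \ref{th:p1u kirchheim}, $M$ is purely $1$-unrectifiable if and only if it contains no curve fragment. Curve fragments are compact, so this holds if and only if every compact subset $K\subset M$ is purely $1$-unrectifiable.

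For (i)$\Rightarrow$(v) and (i)$\Rightarrow$(vi), assume $M$ is purely $1$-unrectifiable and let $K\subset M$ be compact. Then $K$ is purely $1$-unrectifiable, since any Lipschitz image in $K$ of a subset of $\RR$ is also one in $M$. Theorem \ref{th:compact} (i)$\Rightarrow$(v),(vi) then shows that $\lipfree{K}$ has the Radon-Nikod\'ym and Schur properties. There is a base point issue: $\lipfree{K}$ is identified with a subspace of $\lipfree{M}$ only when $0\in K$. Since the properties in question are isomorphic invariants and $\lipfree{K}$ does not depend on the choice of base point up to isometry, I would simply replace $K$ by $K\cup\set{0}$, which is still compact and purely $1$-unrectifiable. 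Applying Theorems \ref{th:rnp cd} and \ref{th:schur cd} then yields (v) and (vi) for $\lipfree{M}$.

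For (v)$\Rightarrow$(vii) and (vi)$\Rightarrow$(vii), I would repeat the argument used in the compact case. $L^1([0,1])$ fails both the Radon-Nikod\'ym and Schur properties, and both properties pass to subspaces and are isomorphic invariants. Hence a space with either property cannot contain an isomorphic copy of $L^1([0,1])$.

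For (vii)$\Rightarrow$(i), I would use the contrapositive, which is exactly the last proposition of Section \ref{sec:compact}. That proposition was stated for arbitrary metric spaces: if $M$ contains a curve fragment $\varphi(K)$, then $\lipfree{\varphi(K)}\cong\lipfree{K}\cong L^1([0,1])$ by Theorem \ref{th:godard}, and this space embeds in $\lipfree{M}$ by Proposition \ref{pr:lipfree_subspace}. Compactness is not needed anywhere in this step.

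All the substantial work is contained in the compact determination theorems, which I am taking as given. The only genuine step left is the observation that pure $1$-unrectifiability is compactly determined and hereditary to subsets; the remaining care needed is the minor base point bookkeeping when passing to $\lipfree{K}\subset\lipfree{M}$.
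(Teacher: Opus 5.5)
Your proposal is correct and follows essentially the same route as the paper: you obtain (i)$\Rightarrow$(v),(vi) by combining Theorem~\ref{th:compact} for compact subsets with the compact determination Theorems~\ref{th:rnp cd} and~\ref{th:schur cd}, and you observe that (v)$\Rightarrow$(vii), (vi)$\Rightarrow$(vii) and (vii)$\Rightarrow$(i) never used compactness. The only difference is organizational: you close the cycle through (vii), so you need only that pure $1$-unrectifiability passes to compact subsets, whereas the paper invokes compact determination of all of (i), (v), (vi); your base point bookkeeping with $K\cup\set{0}$ is correct and harmless.
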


\begin{proof}
Properties (i), (v) and (vi) are compactly determined by Theorems \ref{th:p1u kirchheim}, \ref{th:rnp cd} and \ref{th:schur cd}, respectively, and they are equivalent to each other for compact $M$ by Theorem \ref{th:compact}, therefore they must be equivalent in general. The implications (v)$\Rightarrow$(vii), (vi)$\Rightarrow$(vii) and (vii)$\Rightarrow$(i) hold even in the non-compact case, as their proofs given in Section \ref{sec:compact} did not require compactness.
\end{proof}

As a corollary of Theorem \ref{th:non-compact}, we deduce that property (vii) is also compactly determined, although we have no direct proof of that fact.

The argument for equivalence with (vii) can be replicated with some other properties of $\lipfree{M}$ in order to add them to Theorem \ref{th:non-compact}. The most notable one is probably the \emph{Krein-Milman property}, which is satisfied by a Banach space $X$ whenever every closed bounded and convex subset of $X$ equals the closed convex hull of its extreme points. Indeed, that property is implied by (v) (see the equivalences after Definition \ref{def:rnp}) and, in turn, implies (vii); thus, the Krein-Milman property and the Radon-Nikod\'ym property are equivalent for Lipschitz-free spaces. Whether these two properties are equivalent in general is probably the most important open problem in the geometry of Banach spaces.

Of course, one may wonder whether the remaining properties (ii)-(iv) appearing in Theorem \ref{th:compact}, which are equivalent in the compact case, are still equivalent in general. Condition (ii), i.e.~``$\lip(M)$ separates points of $M$ uniformly'', always implies the others by Proposition \ref{pr:spu p1u}, but the converse implication is currently an open problem; see Question \ref{q:spu p1u}. On the other hand, conditions (iii) and (iv) involving the duality of $\lipfree{M}$ cannot be equivalent to the others in general. Indeed, Example 5.8 in \cite{GPPR} describes a simple complete, countable metric space (hence purely $1$-unrectifiable) whose Lipschitz-free space admits no predual, so (i) does not imply (iii) or (iv) in general. Conversely, duality implies (v) when $M$ is separable by Theorem \ref{th:separable dual rnp}, but not in the non-separable case as witnessed by \cite[Corollary 3.2]{AGP}: one can construct a metric space $M$ whose Lipschitz-free space is isometric to the non-separable dual $C([0,1])^*$, which lacks the Radon-Nikod\'ym and Schur properties.

\section*{Acknowledgments}

The contents of this paper correspond approximately to a lecture delivered by the author at the Second Winter School in Geometric Measure Theory at Westlake University, Hangzhou, on February 3rd, 2026. The author wishes to thank Thierry de Pauw and the rest of the organizing team for their kind invitation, warm welcome and excellent organization. He also wishes to thank the anonymous referee of this manuscript for their careful reading and error detection.


\end{document}